\documentclass[12pt,a4paper]{article}

\usepackage[bbgreekl]{mathbbol}
\usepackage{graphicx,float,bbm}
\usepackage{amsmath,amssymb,nccmath}
\usepackage{color}
\usepackage[dvipsnames]{xcolor}
\usepackage{diagbox,float}
\usepackage{adjustbox}
\usepackage{multirow, makecell,nccmath,amsmath,amsfonts}
\usepackage{soul,verbatim}
\usepackage{hyperref}
\usepackage{xurl}
\usepackage{thmtools}
\usepackage[justification=centering]{caption}
\usepackage{setspace}
\usepackage{cleveref}

\usepackage{algorithm}
\usepackage{algorithmic}
\usepackage{pgfplots}
\pgfplotsset{compat=1.17}

\usepackage{natbib}

\usepackage[textwidth=7.25in,textheight=9.75in,centering]{geometry}
\usepackage{setspace}

\newtheorem{assumption}{Assumption}[section]

\newtheorem{proposition}{Proposition}
\newtheorem{lemma}{Lemma}
\newtheorem{definition}{Definition}

\newtheorem{corollary}{Corollary}

\newtheorem{remark}{Remark}

\providecommand{\keywords}[1]
{
  \small	
  \textbf{\textit{Keywords:}} #1
}

\newcommand{\V}{\overline{V}}

\newcommand{\bvarphi}{\pmb{\varphi}}

\newcommand{\bnu}{\pmb{\nu}}
\newcommand{\bx}{\mathbf{x}}
\newcommand{\bu}{\mathbf{u}}
\newcommand{\bs}{\mathbf{s}}

\newcommand{\bz}{\mathbf{z}}
\newcommand{\bw}{\mathbf{w}}

\newcommand{\bX}{\mathbf{X}}

\newcommand{\mA}{\mathcal{A}}
\newcommand{\mX}{\mathcal{X}}
\newcommand{\mS}{\mathcal{S}}
\newcommand{\mT}{\mathcal{T}}
\newcommand{\mU}{\mathcal{U}}
\newcommand{\mB}{\mathcal{B}}
\newcommand{\mM}{\mathcal{M}}

\newcommand{\mABX}{\mathcal{A} \in \mathcal{B}(\mathcal{X})}
\newcommand{\mBX}{ \mathcal{B}(\mathcal{X})}

\newcommand{\mMpS}{\mathcal{M}_{\mathbb{P}}(\mathcal{S})}
\newcommand{\mBS}{\mathcal{B}(\mathcal{S})}

\newcommand{\oV}{\overline{V}}
\newcommand{\ov}{\overline{v}}
\newcommand{\ovr}{\overline{r}}
\newcommand{\mMpX}{\mathcal{M}_{\mathbb{P}}(\mathcal{X})}
\newcommand{\mMplusX}{\mathcal{M}_{+}(\mathcal{X})}
\newcommand{\blambda}{\pmb{\lambda}}

\DeclareMathOperator*{\argmin}{arg\,min}

\begin{document}
	
	
\title{\bfseries {\Large Optimizing Treatment Allocation to Maximize the Health of a Population}
}

\author{\small Daniel Adelman, Alba V. Olivares-Nadal, Miaolan Xie\thanks{The University of Chicago, Booth School of Business, \textit{Dan.Adelman@chicagobooth.edu}, UNSW Business School, \textit{a.olivares\_nadal@unsw.edu.au}, Purdue University, Edwardson School of Industrial Engineering
        \textit{miaolanx@purdue.edu}}}

\date{\small \today}
\maketitle
\thispagestyle{empty}

	\abstract{
Recent shifts in global health priorities have positioned Population Health Management (PHM) as a central area of focus. However, optimizing PHM strategies presents several challenges: managing high-dimensional patient covariates, tracking their evolution and long-term response to interventions, and accounting for the inflow and outflow of individuals within the population. In this paper, we propose a novel approach based on Measurized Markov Decision Processes (MDPs) that integrates all of these components. Specifically, we consider a setting in which a treatment with population-level benefits is available but scarce, and model an MDP that optimizes the long-term distribution of the healthcare population subject to expected capacity constraints.  
This formulation allows us to bypass both the dimensionality and practical challenges of handling and tracking individual patient covariates across the population. To ensure ethical compliance, we introduce a non-maleficence constraint that limits the allowable mortality rate.  To solve the resulting infinite-dimensional problem, we develop an approximate solution method that reduces the task to identifying a finite set of high-performing treated and untreated patients. Despite the complexity of the underlying structure, our approach yields a simple and clinically implementable index policy: a patient is selected for treatment if their adjusted impactability exceeds a specified threshold. The adjusted impactability captures the long-term consequences of receiving, or not receiving, treatment. While straightforward to apply, the policy remains flexible and can incorporate general machine learning models. Using Centers for Medicare \& Medicaid Services (CMS) data and machine learning to estimate population dynamics, we show that our policy yields a statistically significant improvement over a myopic benchmark. This advantage increases with the time horizon, consistent with the forward-looking nature of our policy, which accounts for the future consequences of treatment decisions. At the longest horizon tested, this corresponds to over 1,500 additional home days annually per 1,000 patients.

    }
	
	\keywords{Markov Decision Process, Population Health Management, Measurized MDPs, Contextual Information, Treatment.}
	\maketitle
	
	\section{Introduction \label{sec:introduction}}

Population health management (PHM) is an approach focused on improving the health outcomes of an entire population by shifting from reactive, episodic care toward proactive and preventive interventions \citep{PHM, PHM_def}. While the conceptual foundation of PHM lies in public health and community medicine, its strategic relevance has grown significantly with the adoption of the {\it ``Triple Aim"} framework: improving population health, enhancing the patient experience, and reducing per capita healthcare costs \citep{TripleAim}.

Moreover, in recent years PHM has been widely adopted in multiple countries as they shifted to value-based healthcare models. In particular, in the United States the Centers for Medicare \& Medicaid Service have articulated an ambitious objective for 2030: to transition all patients in Medicare, and the majority of Medicaid recipients, into value-based care arrangements \citep{CMS2030}. These arrangements move away from fee-for-service incentives, instead rewarding outcomes, cost efficiency, and care quality. Within this context, effective PHM is no longer optional but foundational, as providers must reduce avoidable hospitalizations, prevent disease progression, and deliver timely interventions. For instance, Care Management Programs (CMPs; \cite{gawande2011}) are a cornerstone of preventive treatments for value-based providers. CMPs have limited capacity and are typically run by teams of nurses, care coordinators, and social workers. Their goal is to help vulnerable patients navigate the healthcare system, follow treatment plans, and access preventive services. 

The urgency of advancing PHM is also closely tied to emerging global health priorities. In its Global Health Strategy 2025-2028, the World Health Organization (WHO) identifies population health as a key focus area \citep{WHO2025}. This emphasis reflects several growing pressures on health systems worldwide. First, countries are expected to strengthen their population health analytics to scale up and accelerate public health.  Second, the strategy highlights the need to enhance preparedness and response capacity for health emergencies. This includes scaling up population health interventions, such as expanding vaccination programs, improving infection prevention and control, and other preventive public health measures. The COVID-19 pandemic underscored the importance of this objective, as scarce resources like vaccines had to be allocated based on population-level benefit. Finally, health systems need to be rethought to address major demographic and epidemiological shifts, as ageing populations. 

Despite the growing relevance of PHM in today's healthcare systems, there remains a lack of Operations Research methods that integrate all the essential components needed to model and optimize PHM strategies. For example, consider a scenario where a treatment with population-level benefits (e.g., CMPs or COVID-19 vaccines) is available but scarce. {\it How should such a treatment be deployed to maximize its impact across the population?} Answering this question requires an optimization framework that accounts for: (a) high-dimensional patient-level context; (b) differentiated expected outcomes for treated versus untreated patients; (c) the evolution of patients' covariates over time; (d) how treatment impacts the trajectories of the covariates; and (e) the inflow and outflow of patients (e.g., through births and deaths).
Components (c)-(e) capture the dynamics of the healthcare population and relate directly to the WHO's objective of adapting to demographic shifts. In addition to incorporating elements (a)-(e), a practical approach must also be robust to reality: available information is often limited, as healthcare populations can be large enough that tracking the current status and covariate trajectories of every individual becomes infeasible.

There is ample literature on modeling the uncontrolled evolution of healthcare populations. For instance, PDE models have been used to analyze symptomatic, asymptomatic, and super-spreader populations during the COVID-19 pandemic \citep{PDE_nature}. Markov chain models have also been applied to represent the transmission of infectious diseases in large populations \citep{yaesoubi}, while Hidden Markov Models have been used to study the temporal evolution and transitions of multimorbidity patterns in geriatric populations \citep{roso}. However, to the best of our knowledge, there are no models that aim to optimize (rather than simply analyze) the health of an entire population. One likely reason this problem has remained unaddressed is the high dimensionality of the resulting formulation, combined with the practical challenge of monitoring every individual patient.

In this paper, we address the problem of selecting patients from a healthcare population for treatment under a capacity constraint, such as those illustrated by the CMP and COVID-19 vaccine examples discussed above, with the aim to improve the health of the population. We approach this challenge from a novel perspective: in line with the principles of PHM, we focus on the distribution $\mu$ of the population covariates $\bx\in \mathcal{X} \subset \mathbb{R}^p$ rather than on the information $\bx_1,...,\bx_n $ of $n$ identified individuals, and aim to optimize $\mu$ over time through the implementation of the treatment. The framework underpinning our model and theoretical results is that of Measurized Markov Decision Processes (MDPs), in which the traditional MDP is lifted to the space of probability measures \citep{measurized}. 
This framework captures population-level dynamics and naturally allows us to accommodate elements (a)-(e), offering a novel approach to optimizing PHM strategies. 
More specifically, our main contributions are:

	\begin{enumerate}
		\item[(i)] {\bf Distributional formulation:} We formulate the patient selection problem as a stochastic discounted infinite-horizon MDP and lift it to the space of probability measures using the measurized framework \citep{measurized}. This transformation converts a stochastic process that tracks individual patients into a deterministic process over measures. To the best of our knowledge, this is the first application of the measurized theory in a practical and realistic problem.
		\item[(ii)]  {\bf Timewise dualization of the capacity constraint:} We dualize the capacity constraint in (i) using time-dependent Lagrange multipliers, following the framework developed in \cite{measurizedWMDP} for weakly coupled MDPs. This approach enforces the constraint in expectation at each time period, rather than along sample paths as in \cite{adelman_weakly}, thus offering a tighter upper bound. Although the time-independent dualization in \cite{adelman_weakly} has been widely used in healthcare (see, for example, \cite{grandclement}), we believe this is the first work to apply timewise dualization in an infinite-horizon healthcare setting. 
		\item[(iii)] {\bf High-dimensional patients' covariates:} Under the assumption of i.i.d. patients, the theory in \cite{measurizedWMDP} ensures that the problem collapses into a unidimensional measurized state. 
        In other words, the combination of (i) and (ii) allows us to incorporate PHM's critical element (a), whereas most of the prior MDP models for PHM can handle only low-dimensional finite states like health status or severity scores (see, for instance, \cite{zhang2017,grandclement}).
		\item[(iv)] {\bf Measure-valued controlled healthcare population model:} We model the population dynamics described in (c)-(e) as a deterministic process over measures. Specifically, we account for (c) and (d) by introducing separate transition kernels for treated and untreated patients, as well as modeling the probability of treatment abandonment. In addition, we show that under certain assumptions, the process converges to a stationary distribution in the absence of intervention. 
        We are not aware of any prior work modeling a controlled healthcare population as a measure-valued process. 
		\item[(v)] {\bf Optimizing for the best equilibrium in the population: } We formulate the problem of identifying the optimal steady state for the measurized model described in (ii); that is, we aim to maximize the expected health of the population in equilibrium. This appears to be the first work to formulate and solve an optimization problem for deploying an intervention across a healthcare population with the explicit goal of reaching and sustaining its optimal invariant state. As shown in \cite{measurized}, this objective aligns with optimizing under the long-run average reward criterion. To ensure ethical compliance, we explicitly introduce a non-maleficence constraint that limits the allowable mortality rate. This constraint prevents the system from prioritizing reward maximization at the expense of patient welfare, and ensures adherence to the core medical ethics principle of non-maleficence. Similar ethical considerations have been incorporated in prior work on healthcare operations under a fairness lens \citep{bertsimas2013fairness, olsen2011concepts, mccoy2014using}. 
		\item[(vi)] {\bf Solution algorithm:} We relax the equilibrium constraint in (v) and approximate the bias function using a linear combination of basis functions, which may correspond to health measures that can be computed using Machine Learning (ML) methods. The estimation of the approximation parameters leads to a semi-infinite Linear Program (LP), which we solve using a row generation algorithm. We further show that it is sufficient to identify the covariates of the $K$ best treated and untreated patients, where $K$ is the number of basis functions used.
		\item[(vii)] {\bf Structure and flexibility of the optimal index policy:} Despite the complexity of the underlying structure, our approach yields a simple index policy that is clinically implementable. Specifically, the optimal policy for the approximated problem in (vi) selects patients for treatment if their adjusted impactability exceeds a certain threshold. This adjusted impactability captures the long-term consequences of receiving (or not receiving) treatment. Notably, while the resulting policy is simple to apply, it remains flexible and can accommodate general ML models, including deep neural networks. 
        \item[(viii)]{ {\bf Numerical study on real data:} We evaluate our approach using the 2018 CMS dataset, which refers to the data collected and made available by the Centers for Medicare \& Medicaid Services (CMS). This data encompasses various aspects of the Medicare, Medicaid, and CHIP programs, including enrollment, utilization, and provider information. Population dynamics and basis functions are estimated via logistic and lasso regressions. Our ADP policy outperforms a myopic benchmark in simulation across different time horizons, with the improvement growing over longer horizons.}
	\end{enumerate}


	This paper is structured as follows. In the next section we perform a brief literature review. Section \ref{sec:knapsack} deals with (i)-(iii). Section \ref{Healthcare_process} addresses (iv), Section \ref{sec:equilibrium_approximation} tackles (v),  and Section \ref{sec:ADP} deals with (vi). Section \ref{sec:approx_policies} demonstrates (vii), and
    Section \ref{sec:experiments} tackles (viii). Finally, the last section presents the concluding remarks and future work. 

	\color{black}

	\subsection{Brief literature review}


    MDPs have been widely used to model capacity allocation in healthcare settings. Applications include patient scheduling (e.g., \cite{diamant, astaraky, saure2020}), radiation therapy scheduling \citep{saure2021},  asthma therapy allocation for pediatric populations \citep{deo} and proactive ICU transfer \citep{grandclement}. Among these, the studies most closely related to our work are \cite{deo} and \cite{grandclement}, as both model the MDP states with patient-level information. However, these works use low-dimensional or discrete patient states, such as severity scores or health categories, whereas we consider high-dimensional and potentially continuous covariate vectors. Additionally, \cite{grandclement} incorporates hospital population dynamics (such as patient arrivals, ICU occupancy, and discharge probabilities) but the state transitions are modeled using low-dimensional matrices. In contrast, our framework generalizes these dynamics to continuous covariate spaces and leverages ML methods to estimate them from rich patient histories.
    
   The patient selection problem under capacity constraints can generally be framed as a weakly coupled MDP \citep{adelman_weakly}.
    In the absence of contextual information, it reduces to a knapsack problem, for which several dynamic programming approaches with stochastic rewards have been studied \citep{aouad,bertsimas2002,clautiaux}. 
    In the contextual bandit problem \citep{langford}, covariates are drawn i.i.d. from a fixed distribution, representing in our context a single patient that randomly arrives. Once treated, the patient exits the system and is not tracked further, and the covariate distribution remains stationary and unaffected by actions. In contrast, we manage a full healthcare population at each decision point, where the distribution of patients evolves over time according to a stochastic kernel influenced by treatment decisions. This setting is closer in spirit to a restless bandit framework, in which each arm represents a patient whose state evolves regardless of whether it is selected. However, a key difference is that, in standard restless bandits, pulling an arm is a one-period action, after which the arm becomes inactive by default in the next period. This corresponds to the COVID-19 vaccination example discussed above, where administering a vaccine occupies capacity only at the time of delivery. In this paper, we study a more general model in which the activation of an arm persists for a random duration. This captures settings such as CMPs, where patients who enroll in treatment remain in the program for a random amount of time. As a result, treatment decisions not only generate immediate rewards but also consume capacity over time. This introduces a fundamental trade-off between immediate benefit and future capacity availability that is absent in standard restless bandit models. Consequently, we expect optimal policies to select patients who continue to benefit long after treatment has ended.
    \color{black}
    
     Bandits-with-knapsacks, introduced in \cite{banditswithknapsacks}, models settings where actions yield both rewards and resource consumption. However, our setting differs in key ways: here, the consumption per treated patient is fixed (i.e., occupying one treatment slot), and resources are replenished when patients exit treatment. In contrast, bandits-with-knapsacks typically involve non-replenishable resources. Moreover, most bandit models assume static covariates, whereas we incorporate the dynamics (c)-(e).
\color{black}

	Although bandit algorithms incorporate exploration as an avenue to learn the reward function,  as noted in \cite{bastani2021greedy} the exploration may be unethical in certain healthcare contexts such as clinical trials. 
    In this paper, we consider no learning and assume that the reward functions and transition kernels are known. Another paper addressing treatment allocation is \cite{bastaniCOVID}, which uses exogenous information and proposes a certainty-equivalent update method to allocate PCR tests to tourists arriving in Greece during summer 2020. A related work that incorporates patient covariates is \cite{lassobandit}, which addresses a high-dimensional contextual bandit problem using a lasso-based approach to promote sparsity. \cite{lassobandit} considers multiple treatments with unknown reward functions, whereas we focus on a single treatment with known outcome models for both treated and untreated patients.
	\color{black}
	\section{Patient selection problem with contextual information \label{sec:knapsack}}

    In this section, we formulate the problem of selecting patients for a scarce treatment as an MDP. We begin with a traditional stochastic formulation, where individual patients are identified at each time period. We then lift this MDP to the space of probability measures using the framework developed in \cite{measurized}, resulting in a deterministic process that we refer to as a Measurized MDP. We then decompose the state distribution to obtain a more stylized and intuitive formulation.

\subsection{Identified formulation}

We consider a fixed-size population of $n$ patients, from which up to $m$ can be selected for treatment in order to maximize health outcomes. Let $y_1(\cdot)$ and $y_0(\cdot)$ denote the {\it expected} outcomes for treated and untreated patients, respectively, and assume these depend on each patient's characteristics $\bx_i\in \mX\subseteq \mathbb{R}^p$, for $i \in \{1, \ldots, n\}$. At the beginning of each period, we observe the covariates $\bx_i$ for all patients, along with an indicator $z_i \in \{0,1\}$, which equals 1 if patient $i$ is already under treatment and 0 otherwise. The state space is defined as $\mathcal{S} = \mathcal{S}_1 \times \cdots \times \mathcal{S}_n$, where $\mathcal{S}_i = \mathcal{X} \times \{0,1\}$. We denote a state of the MDP by $(\bX,\bz)$, where $\bX\in \mathbb{R}^{n\times p}$ is the matrix of patients' covariates and $\bz\in \{0,1\}^n$ is the current treatment vector. An action is a vector $\bu \in \{0,1\}^n$, where $u_i = 1$ indicates that patient $i$ is selected for treatment, and $u_i = 0$ otherwise. Since treatment is limited to $m$ patients, the set of feasible actions given $\bz$ is
$\mathcal{U}(\bz) = \left\{ \bu \in \{0,1\}^n : u_i \leq 1 - z_i \ \forall i = 1, \ldots, n, \ \text{and} \ \sum_{i=1}^n (z_i + u_i) \leq m \right\}.
$
The first constraint ensures that patients already under treatment cannot be re-selected. The second is a linking constraint that enforces the overall treatment capacity.
 The optimality equations of the discounted infinite-horizon identified patient selection problem are
	
	\begin{align}
		V^*(\bX,\bz)=\max_{\bu \in \{0,1\}^n} & \quad \sum_{i=1}^n (z_i+u_i) y_1(\bx_i)+\sum_{i=1}^n (1-z_i-u_i) y_0(\bx_i)+ \alpha \mathbb{E}_Q [V^*(\bX',\bz')| (\bX,\bz), \bu ] \nonumber \\
		\mbox{s.t.}  &\quad  u_{i} \leq 1-z_{i}, \hspace{3cm} \forall i=1,...,n \label{knapsack_discrete} \tag{K}  \\
		& \quad \sum_{i=1}^n (z_i+u_{i}) \leq m, \nonumber
	\end{align}
	
	\noindent where $\alpha \in (0,1)$ is the discount factor and the expectation is computed according to the transition kernel $Q((\bX',\bz')|(\bX,\bz),\bu)$. Throughout the paper, we use a prime to denote the next state when we omit the time index. This formulation generalizes a broad class of knapsack problems, making the framework applicable to a wide range of selection problems. For example, by interpreting $y_1(\bx_i)$ as the negative weight of item $i$, and setting $y_0(\bx) = 0$ for all $\bx \in \mathcal{X}$, the model recovers a standard knapsack setting.  Under the assumption that the evolution of each patient’s covariates, and whether they exit the treatment group, is independent of the others, the value function transition can be written as:

	\begin{align*}
		\mathbb{E}_Q [V(\bX',\bz')| (\bX,\bz), \bu ]&= \int_{S}  V(\bX',\bz') \ Q((d\bX',d\bz')|(\bX,\bz),\bu)\\
		&= \int_{S_1} \hdots \int_{S_n} V(\bX',\bz')\  q_1((d\bx'_1,dz'_1)|(\bx_1,z_1),u_1) \hdots q_n((d\bx'_n,dz'_n)|(\bx_n,z_n),u_n).   \nonumber
	\end{align*}
	where $q_i$ are the transition kernels for each patient $i$, such that $Q=\Pi_i q_i$. With these specifications Problem \eqref{knapsack_discrete} is a weakly coupled MDP. We decouple the problem by dualizing the capacity constraint with a non-negative sequence $\Lambda=\{\lambda_t\}_{t\geq 0}$ of time-dependent Lagrange multipliers \citep{measurizedWMDP}. For any sequence $\Lambda\geq0$, the problem with dualized capacity constraints reads:
    
    	\begin{align}
		V^{*\Lambda}_t(\bX_t,\bz_t)=\max_{\bu \in \{0,1\}^n} & \quad \sum_{i=1}^n (z_{i,t}+u_{i,t}) y_1(\bx_{i,t})+ \sum_{i=1}^n (1-z_{i,t}-u_{i,t}) y_0(\bx_{i,t})+\lambda_t \left(m-\sum_{i=1}^n (z_{i,t}+u_{i,t}) \right) \nonumber \\
		& \qquad \qquad+ \alpha \mathbb{E}_Q [V^{*\Lambda}_{t+1}(\bX_{t+1},\bz_{t+1})| (\bX_t,\bz_t), \bu_t ] \nonumber \\
		\mbox{s.t.}  &\quad  u_{i} \leq 1-z_{i}, \hspace{3cm} \forall i=1,...,n. \label{K_lambda} \tag{K$^\Lambda$} 
	\end{align}
    
    The value function in \eqref{K_lambda} is indexed by $\Lambda$ and $t$ to indicate that the associated reward function may depend on different Lagrange multipliers each period. In addition, this value function provides an upper bound on the value function in \eqref{knapsack_discrete}; that is, for any fixed $\Lambda \geq 0$ we have $V^*_t(\bX, \bz) \leq V^{*\Lambda}_t(\bX, \bz)$ at each period $t$. Taking the infimum over $\Lambda$ yields the tightest possible upper bound. In the next section, we follow this approach as we lift the MDP to the space of probability measures.




    

  \subsection{Measurized formulation for i.i.d. patients}

In this section, we assume that the initial states of the patients, $(\bx_{1,0}, z_{1,0}), \ldots, (\bx_{n,0}, z_{n,0})$, are i.i.d. following distribution $\nu_0 \in \mathcal{M}_{\mathbb{P}}(\mX\times\{0,1\})$, where $\mathcal{M}_{\mathbb{P}}(\cdot)$ denotes the space of probability measures defined in a space. According to the definition of Measurized MDP (see Appendix A for details), the initial state of the lifted MDP is represented by the $n$-dimensional state distribution $\bnu_0 = (\nu_0, \ldots, \nu_0)$. Similarly, the measurized action corresponding to the lifted counterpart of \eqref{K_lambda} at the start of the horizon is an $n$-dimensional stochastic kernel $\bvarphi_0 = (\varphi_{1,0}, \ldots, \varphi_{n,0})$, where each $\varphi_{i,0}$ belongs to the set
$\Phi_i := \left\{ \varphi \in \mathcal{K}(\{0,1\} \mid \mathcal{X} \times \{0,1\}) : \varphi(0 \mid \bx, 1) = 1 \ \text{for all } \bx \in \mathcal{X} \right\}.
$
Here, $\mathcal{K}(\{0,1\} \mid \mathcal{X} \times \{0,1\})$ denotes the set of stochastic kernels that assign a probability to each binary action $u \in \{0,1\}$, given a patient’s covariates $\bx$ and current treatment status $z$. The condition $\varphi(0 \mid \bx, 1) = 1$ ensures that patients already under treatment cannot be reselected. Conversely, $\varphi(1 \mid \bx, 0)$ represents the probability that an untreated patient with covariates $\bx$, is offered treatment. 

Given the specifications above, the measurized counterpart to \eqref{K_lambda} is a deterministic MDP. While the individual patient state $(\bx_{i,t}, z_{i,t})$ at time $t$ may be unknown, its distribution $\nu_{i,t}$ is fully determined. Moreover, since patient covariates evolve independently, Lemma 1 in \cite{measurizedWMDP} guarantees that the distributional transition decomposes across patients. That is, the distributions $\nu_{i,t}$ of the random states $(\bx_{i,t}, z_{i,t})$ remain independent at each time $t$ and evolve according to the deterministic update:

\begin{equation}\label{fi}\tag{f$_i$}
\nu_{i,t}(\cdot) = f_i(\nu_{i,t-1}, \varphi_{i,t-1})(\cdot) = \int_{\mathcal{S}} \int_{\mathcal{U}} q_i(\cdot \mid s, u) , \varphi_{i,t}(du \mid s) , d\nu_{i,t}(s),
\end{equation}
where $\varphi_{i,t}$ is the measurized action applied at time $t$. Then the lifted version of \eqref{K_lambda} is

	\begin{align}
		\oV^{*\Lambda}_t(\bnu_t)= \ \max_{\substack{\varphi_{it} \in \Phi_i\\ i=1,...,n}} & \ \sum_{i=1}^n \mathbb{E}_{\nu_{it}}\mathbb{E}_{\varphi_{it}} [(z+u) y_1(\bx)] 
        +\sum_{i=1}^n \mathbb{E}_{\nu_{it}}\mathbb{E}_{\varphi_{it}} [(1-z-u) y_0(\bx)] 
        +\lambda_t\left(m- \sum_{i=1}^n \mathbb{E}_{\nu_{it}}\mathbb{E}_{\varphi_{it}} [z+u]\right) \nonumber\\
        & \qquad + \alpha \oV^{*\Lambda}_{t+1}((f_i(\nu_{it},\varphi_{it}))_{i=1,...,n}) \label{K*}\tag{$\overline{\mbox{K}}^\Lambda$}
	\end{align}

Under the assumption that transitions are identically distributed across patients, i.e., $q_i = q$ for all $i$ (or, equivalently, $f_i=f$ for all $i$), Corollary 3 in \cite{measurizedWMDP} shows that taking the infimum over $\Lambda \geq 0$ in \eqref{K*} collapses the problem to a unidimensional formulation in the space of measures. In particular

	\begin{equation} \label{collapse}
		\oV^{*\Lambda^*}_t(\bnu)=n\ov^*(\nu)
	\end{equation}
	where $\Lambda^*:=\arg\max_{\Lambda\geq 0} \ \oV^\Lambda(\bnu_0)$ (note that $\Lambda^*$ depends on $\bnu_0$, but we omit this for notational convenience), $\ov^*(\cdot)$ is the solution to
	\begin{align}\label{K*1}\tag{K$^*_1$}
		\ov^*(\nu)= \ \max_{\substack{\varphi \in \Phi}}& \ \mathbb{E}_{\nu}\mathbb{E}_{\varphi} [(z+u) y_1(\bx)]+\mathbb{E}_{\nu}\mathbb{E}_{\varphi} [(1-z-u) y_0(\bx)] + \alpha \ov^{*}(f(\nu,\varphi))\\
		\mbox{s.t.} & \  \mathbb{E}_{\nu}\mathbb{E}_{\varphi} [z+u] \leq m/n, \nonumber
	\end{align}
and $f$ is the deterministic transition given by \eqref{fi}. Therefore, \cite{measurizedWMDP} shows that enforcing the capacity constraint in expectation yields the tightest upper bound within the formulation \eqref{K*}. Moreover, when patients are i.i.d., it is sufficient to track a single state distribution $\nu$ rather than one for each individual; i.e., $\nu_1,...,\nu_n$. In the next section, we decompose this distribution $\nu$ into two separate measures, $\eta$ and $\rho$, which respectively capture the covariate distributions and relative proportions of untreated and treated patients in the population.

\subsection{Decomposition of the state distribution}
 Given the binary nature of $z$, which indicates whether a patient is under treatment ($z = 1$) or not ($z = 0$), the probability distribution $\nu$ can be partitioned accordingly as:

	\begin{equation}\label{partition_nu}
		\nu(\mA,\{0,1\})=\nu(\mA,1)+\nu(\mA,0) \qquad \forall \mA \in \mBX, 
	\end{equation}

	\noindent We can also use the binary nature of both $z$ and $u$, to partition the state space $\mS=\mX \times \{0,1\}$ and the action space $\mU=\{0,1\}$ of the identified MDP. Plugging \eqref{partition_nu} in the first term of the objective in \eqref{K*1} we get
	
	\begin{align}
		\mathbb{E}_{\nu}\mathbb{E}_{\varphi} [(z+u) y_1(\bx)]& = \int_{\mX \times\{0,1\}} \int_{\{0,1\}} \left\{ (z+u) y_1(\bx)   \right\} \varphi(du | \bx,z) d\nu(\bx,z) \nonumber \\
		&= \int_{\mX } \int_{\{0,1\}} (1+u) y_1(\bx) \varphi(du | \bx,1) d\nu(\bx,1)  + \int_{\mX } \int_{\{0,1\}} u y_1(\bx)  \varphi(du | \bx,0) d\nu(\bx,0) \nonumber \\
		&= \int_{\mX }  y_1(\bx)  d\nu(\bx,1)  + \int_{\mX }  y_1(\bx)  \varphi(1| \bx,0) d\nu(\bx,0) \nonumber \\
		&=  \int_{\mX }  y_1(\bx)   \left\{ d\nu(\bx,1)  +\varphi(1| \bx,0) d\nu(\bx,0)\right\},   \ \nonumber 
	\end{align}
	where the first equality comes from partitioning the measure $\nu$ as in \eqref{partition_nu}, and the second from the fact that $u$ and $z$ cannot be one simultaneously. Similarly, the second term in the objective of \eqref{K*1} yields
	
	\begin{align}
		\mathbb{E}_{\nu}\mathbb{E}_{\varphi} [(1-z-u) y_0(\bx)]& = \int_{\mX \times\{0,1\}} \int_{\{0,1\}} \left\{ (1-z-u) y_0(\bx)   \right\} \varphi(du | \bx,z) d\nu(\bx,z) \nonumber \\
		&= \int_{\mX } \int_{\{0,1\}} u y_0(\bx) \varphi(du | \bx,1) d\nu(\bx,1)  + \int_{\mX } \int_{\{0,1\}} (1-u) y_0(\bx)  \varphi(du | \bx,0) d\nu(\bx,0) \nonumber \\
		&= \int_{\mX }  y_0(\bx)  \varphi(0| \bx,0) d\nu(\bx,0). \nonumber 
	\end{align}
	Following the same reasoning, the left-hand side of the expected value constraint can be rewritten as
	
	\begin{align}
		\mathbb{E}_{\nu}\mathbb{E}_{\varphi} [z+u] &=  \mathbb{E}_{\nu}\mathbb{E}_{\varphi} [z]+ \mathbb{E}_{\nu}\mathbb{E}_{\varphi}[u] = \nu(\mX,1)+\int_{\mX} \varphi(1|\bx,0)d\nu(\bx,0). 
	\end{align}
	If untreated patients can only enter treatment when selected, but treated patients may abandon treatment on their own, the transition law simplifies as follows: 
	
	\begin{align}
		\nu'(\mA,1)&=\int_\mX  q(\mA,1 | (\bx,1),0) \varphi(0|\bx,1) d\nu(\bx,1) + \int_\mX  q(\mA,1 | (\bx,0),1) \varphi(1|\bx,0)d\nu(\bx,0), \hspace{0.5cm } \forall \mABX \nonumber \\
		&=\int_\mX  q(\mA,1 | (\bx,1),0) d\nu(\bx,1) + \int_\mX  q(\mA,1 | (\bx,0),1) \varphi(1|\bx,0)d\nu(\bx,0), \hspace{2cm}  \forall \mABX  \label{nu1}\\
		\nu'(\mA,0)&= \medmath{\int_\mX  q(\mA,0 | (\bx,1),0)  d\nu(\bx,1)+\int_\mX  q(\mA,0 | (\bx,0),1) \varphi(1|\bx,0)d\nu(\bx,0)  +\int_\mX  q(\mA,0 | (\bx,0),0) \varphi(0|\bx,0) d\nu(\bx,0),}
		\qquad \forall \mABX. \label{nu0}
	\end{align}
	This yields the following reformulation of Problem \eqref{K*1}:
	
	\begin{align}
		\ov^{*}(\nu)= \ \max_{\substack{\varphi \in \Phi}} &\  \int_{\mX }  y_1(\bx)   \left\{ d\nu(\bx,1)  +\varphi(1| \bx,0) d\nu(\bx,0)\right\}   + \int_{\mX }  y_0(\bx)    \varphi(0| \bx,0) d\nu(\bx,0)   +\alpha \ov^{*}(f(\nu,\varphi)) \nonumber\\
		\mbox{s.t.}  & \ \   \nu(\mX,1)+\int_{\mX} \varphi(1|\bx,0)d\nu(\bx,0)  \leq m/n. \label{K*2}
	\end{align}
	where $\nu'(\cdot,\{0,1\})=f((\eta,\rho),\tau)(\cdot,\{0,1\})=\nu'(\cdot,1)+\nu'(\cdot,0)$ as defined in \eqref{nu1} and \eqref{nu0}. In problem \eqref{K*2}, the action $\varphi$ appears exclusively integrated with respect to measure $\nu(\cdot,0)$. To ease the notation, define
	
	\begin{align*}
		\rho(\mA)&:=\nu(\mA,1) & \forall \mABX\\
		\eta(\mA)&:=\nu(\mA,0) & \forall \mABX\\
		\mu(\mA)&:=\rho(\mA) + \eta(\mA) & \forall \mABX\\
		\tau(\mA)&:=\int_{\mA} \varphi(1|\bx,0) d\nu(\bx,0)=\int_{\mA} \varphi(1|\bx,0)  d\eta(\bx) & \forall \mABX
	\end{align*}

	\noindent 

Under this specification, $\rho(\mathcal{X}) \leq 1$ and $\eta(\mathcal{X}) \leq 1$ represent the proportions of treated and untreated patients in the population, respectively. As a consequence, $\rho$ and $\eta$ are not probability measures but non-negative measures over $\mX$; we denote the space of such measures as $\mathcal{M}_+(\mathcal{X})$. However, $\rho$ and $\eta$ are proportional to the distribution of treated and untreated patients, respectively. By construction, $\mu = \rho + \eta$,  where $\mu(\mA) = \nu(\mA, \{0,1\})$ for all $\mA \in \mBX$ is the overall distribution of patient covariates. For notational convenience, we define the space of valid measure pairs as

\begin{equation}\label{S_M}
    \mathcal{S}_{\mathcal{M}} := \left\{ (\eta, \rho) \in \mathcal{M}_+(\mathcal{X}) \times \mathcal{M}_+(\mathcal{X}) : \eta + \rho \in \mMpX \right\}.
\end{equation}

The measurized action $\tau \in \mMplusX$ captures both the distribution and proportion of untreated patients who are selected for treatment. As a result, the updated measure of untreated patients becomes $\eta - \tau$, while the updated measure of treated patients becomes $\rho + \tau$. The expected capacity constraint in \eqref{K*1} then reads $\rho(\mX)+\tau(\mX)\leq m/n$; i.e., the post-action proportion of treated patients must not exceed the treatment capacity.

While this change of variables is convenient, it may omit important information if the definition of $\tau$ is not explicitly stated. 
Since $\varphi(1 \mid \bx, 0) \leq 1$ for all $\bx \in \mathcal{X}$, Proposition \ref{lemma:phi} will show that to recover \eqref{K*1} with this new notation it is sufficient to impose the constraint
$\tau(\mA)\leq \eta(\mA)$ for all $\mABX$, which expresses that we cannot select into treatment a mass of patients that is larger than the one available. For convenience, we will denote the set of feasible actions as:

\begin{equation} \label{T_eta,rho}
   \mathcal{T}(\eta,\rho) := \left\{ \tau \in \mathcal{M}_+(\mathcal{X}) : \rho(\mathcal{X}) + \tau(\mathcal{X}) \leq \frac{m}{n}, \ \tau(\mathcal{A}) \leq \eta(\mathcal{A}) \ \forall \mathcal{A} \in \mathcal{B}(\mathcal{X}) \right\}. 
\end{equation}

The transition law given by \eqref{nu1} and \eqref{nu0} can also be written in terms of these new measures
	\begin{align}
		\rho'(\mA)&=\int_\mX  q(\mA,1 | (\bx,1),0) d\rho(\bx) + \int_\mX  q(\mA,1 | (\bx,0),1) d\tau(\bx), \qquad \qquad  \hspace{3cm}\forall \mABX  \label{rho1}\\
		\eta'(\mA)&=  \medmath{\int_\mX  q(\mA,0 | (\bx,1),0)  d\rho(\bx)+\int_\mX  q(\mA,0 | (\bx,0),1) d\tau(\bx)  +\int_\mX  q(\mA,0 | (\bx,0),0) \varphi(0|\bx,0) d\nu(\bx,0), \quad \forall \mABX}  \nonumber\\
		&=  \int_\mX  q(\mA,0 | (\bx,1),0)  d\rho(\bx)+\int_\mX  q(\mA,0 | (\bx,0),1) d\tau(\bx) +\int_\mX  q(\mA,0 | (\bx,0),0) d(\eta- \tau)(\bx) \label{eta1}
	\end{align}
	where the last equality stems from $\varphi(0|\bx,0)=1-\varphi(1|\bx,0)$. We abuse notation and also use $f$ to denote the deterministic transition performed using \eqref{rho1} and \eqref{eta1}; i.e., $(\eta',\rho')=f((\eta,\rho),\tau)$. Finally, we can rewrite the patient-wise selection problem \eqref{K*1} as
	
	\begin{align}\label{K*tau} \tag{K$^*_\tau$}
		\ov^*(\eta,\rho)=& \ \max_{\substack{\tau \in \mT(\eta,\rho)}} \ \mathbb{E}_{\rho+\tau}\left[ y_1(\bx) \right]     +\mathbb{E}_{\eta-\tau}\left[  y_0(\bx) \right]   + \alpha \ov^*(f((\eta,\rho),\tau)), \qquad \forall (\eta,\rho)\in \mS_\mM.
	\end{align}
	%


    We have just seen that a feasible solution to \eqref{K*1} is also feasible to \eqref{K*tau}. The following result shows that for every solution $\tau$ of \eqref{K*tau} there exists a solution $\varphi(1|\bx,0)$ of \eqref{K*1} that is unique on every set with positive measure $\eta$.
	
	\begin{proposition}\label{lemma:phi}
		Fix a state $(\eta,\rho)$; then for every solution $\tau$ of \eqref{K*tau}, there exists a solution $\varphi \in \Phi$ of \eqref{K*1} that is unique $\eta$-almost everywhere.
	\end{proposition}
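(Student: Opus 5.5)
The plan is to go through the Radon--Nikodym theorem. Fix $(\eta,\rho)\in\mS_\mM$ and let $\tau\in\mT(\eta,\rho)$ be a solution of \eqref{K*tau}. The constraint $\tau(\mA)\leq\eta(\mA)$ for all $\mABX$ shows that $\tau\ll\eta$. Both measures are finite, since $\eta(\mX)\leq 1$. Hence the Radon--Nikodym theorem gives a Borel function $g=d\tau/d\eta\geq 0$ with $\tau(\mA)=\int_\mA g\,d\eta$. This $g$ is unique $\eta$-a.e.

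The next step is to show $g\leq 1$ $\eta$-a.e. Set $\mA_\epsilon=\{g>1+\epsilon\}$. Then
\[
\eta(\mA_\epsilon)\geq\tau(\mA_\epsilon)\geq(1+\epsilon)\,\eta(\mA_\epsilon),
\]
which forces $\eta(\mA_\epsilon)=0$. Letting $\epsilon\downarrow 0$ along a countable sequence gives $\eta(\{g>1\})=0$. We may therefore replace $g$ by $\min\{g,1\}$ without changing $\tau$.

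We then define the kernel $\varphi(1|\bx,0):=g(\bx)$, $\varphi(0|\bx,0):=1-g(\bx)$, $\varphi(0|\bx,1):=1$ and $\varphi(1|\bx,1):=0$. This is a measurable stochastic kernel in $\Phi$, and by construction $\int_\mA\varphi(1|\bx,0)\,d\eta=\tau(\mA)$.

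Next we check that $\varphi$ solves \eqref{K*1}. The computations preceding \eqref{K*tau} show that three quantities depend on $\varphi$ only through $\tau$: the objective, the constraint $\rho(\mX)+\tau(\mX)\leq m/n$, and the transition \eqref{rho1}--\eqref{eta1}. The one place $\varphi(0|\bx,0)$ appears is in $\eta-\tau$. The map $\varphi\mapsto\tau_\varphi$ therefore sends $\Phi$ into $\mT(\eta,\rho)$, because $\varphi\leq 1$ gives $\tau_\varphi\leq\eta$. It preserves the one-step objective, feasibility and the next state. Conversely, the construction above shows every $\tau\in\mT(\eta,\rho)$ is attained by some $\varphi$.

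It follows that the two Bellman operators coincide on the corresponding value functions, so $\ov^*$ is the same function in both formulations. Optimality of $\tau$ in \eqref{K*tau} then transfers to optimality of $\varphi$ in \eqref{K*1}: any $\tilde\varphi$ with a strictly better value would yield $\tau_{\tilde\varphi}$ strictly better than $\tau$.

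Uniqueness $\eta$-a.e. follows directly. If $\varphi_1,\varphi_2\in\Phi$ both induce $\tau$, then $\int_\mA(\varphi_1-\varphi_2)(1|\bx,0)\,d\eta=0$ for all $\mA$. Hence they agree $\eta$-a.e. on the untreated component, and they agree everywhere on $z=1$ by the definition of $\Phi$.

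The main obstacle I expect is making the equivalence of the two value functions rigorous rather than merely formal. The recursive term $\ov^*(f(\nu,\varphi))$ in \eqref{K*1} is evaluated at a probability measure $\nu$. In \eqref{K*tau} it is evaluated at the pair $(\eta,\rho)$. I would handle this through the bijection $\nu\leftrightarrow(\nu(\cdot,0),\nu(\cdot,1))$ between $\mMpS$ and $\mS_\mM$. Under this identification the two Bellman operators are conjugate, so their unique fixed points correspond, since both are $\alpha$-contractions on bounded functions when $y_0,y_1$ are bounded. A secondary technical point is measurability of the truncated density, which is immediate because $\min\{g,1\}$ is Borel.
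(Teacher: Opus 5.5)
Your proposal is correct and takes the same route as the paper: the Radon--Nikodym density $g=d\tau/d\eta$, the sets $\{g>1+\epsilon\}$ to show $g\le 1$ $\eta$-a.e., and the kernel with $\varphi(1|\bx,0)=g(\bx)$ and $\varphi(0|\bx,1)=1$. Your extra steps tighten points the paper handles loosely: truncating to $\min\{g,1\}$, transferring optimality (the paper checks only the capacity constraint, i.e. feasibility) through the conjugacy of the two Bellman operators, and directly showing that two kernels inducing the same $\tau$ agree $\eta$-a.e.
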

	
	\begin{proof}{Proof}
		We start by proving that there exists a measurable function $g: \mathcal{X} \rightarrow [0,1]$ such that
		\begin{equation} \label{radon-nikodym}
			\tau(\mA)=\int_{\mA} g(\bx) d\eta(\bx) \qquad \forall \mABX.
		\end{equation}
		
		The second constraint in \eqref{K*tau} implies that $\tau$ is absolutely continuous with respect to $\eta$, since $\tau(\mA)=0$ for every $\mABX$ such that $\eta(\mA)=0$. Then the Radon-Nikodym theorem states that there exists a measurable function $g: \mathbb{R} \rightarrow [0,\infty)$ such that \eqref{radon-nikodym} holds, and $g$ is unique $\eta$-almost everywhere.

		To prove that $g$ maps $\mX$ into the interval $[0,1]$, define $\mA_\epsilon=\{ \bx \in \mX: \ g(\bx)\geq 1+\epsilon \}$ and assume that $\exists \ \epsilon'>0$ such that $\eta(\mA_{\epsilon'})>0$. Since $(\mX,\mBX)$ is a measurable space and $g$ is a measurable function in that space, then $\mA_\epsilon \in \mBX$ for all $\epsilon>0$. Then
		
		$$\tau(\mA_{\epsilon'})=\int_{\mA_{\epsilon'}} g(\bx) d\eta(\bx) \geq (1+ \epsilon') \cdot \eta(\mA_{\epsilon'}) > \eta(\mA_{\epsilon'}) $$
		
		\noindent where the last inequality is strict due to the fact that we assumed that $\eta(\mA_{\epsilon'})>0$. Since this is a contradiction with the second constraint \eqref{K*tau}, we can claim that function $g$ is between 0 and 1 $\eta$-almost everywhere. Define the solution 
		
		$$
		\varphi(u|\bx,z)=\left\{ 
		\begin{array}{ll}
			g(\bx) & \quad \mbox{ if } u=1,z=0\\
			1-g(\bx) & \quad \mbox{ if } u=0,z=0\\
			0 & \quad \mbox{ if } u=1,z=1\\
			1 & \quad \mbox{ if } u=0,z=1
		\end{array}
		\right.
		$$

		Let us show that $\varphi$ is a stochastic kernel. First,  for any fixed action $u \in \{0,1\}$, the function $\varphi(u|\cdot,\cdot)$ is measurable because it is either a constant , $g$ or $1-g$, which are measurable on $\mX$. Second for any fixed state $(\bx,z) \in \mX \times \{0,1\}$, we need to prove that the function $\varphi(\cdot|\bx,z)$ is a probability measure. It is obvious that $\varphi(\cdot|\bx,z) \in [0,1]$ because $g$ itself is. Finally, $\varphi$ is a solution to \eqref{K*1} because $\tau$ verifies the first constraint in \eqref{K*tau}, so $\varphi$ verifies the constraint in \eqref{K*2}, which is an equivalent formulation of \eqref{K*1}.

		Now we prove that this solution is unique $\eta$-a.e. If $\exists \ \epsilon'>0$ such that $\mA_{\epsilon'} \neq \emptyset$ then we could construct a function $g': \mX \rightarrow [0,1]$ such that $g'(\bx)\in [0,1]$ for all $\bx \in \mA_{\epsilon'}$ for all $\epsilon' >0$, $g'$ being equal to $g$ $\eta$ -almost everywhere and fulfilling \eqref{radon-nikodym}. \hfill $\square$
	\end{proof}
	
	As a consequence of Proposition \ref{lemma:phi} we have that problems \eqref{K*1} and \eqref{K*tau} are equivalent. Therefore, the collapsed  problem \eqref{collapse}, which accounts for all patients, can be equivalently written as follows:

	\begin{align}
		\oV^*(\eta,\rho)=& \ \max_{\substack{\tau \in \mT(\eta,\rho)}} \ n\mathbb{E}_{\rho+\tau}\left[  y_1(\bx)\right]       +n \mathbb{E}_{\eta-\tau}\left[  y_0(\bx)\right]    + \alpha \oV^*(f((\eta,\rho),\tau)), \qquad \forall (\eta,\rho)\in \mS_\mM \label{nK*tau} \tag{nK$^*_\tau$} 
	\end{align}

Appendix B provides a sampling interpretation of the Measurized MDP. So far, we have formulated the optimality equations of the MDP, but we have not yet described the underlying transition dynamics. The next section addresses this by modeling population dynamics components (c)-(e) using the transition function $f$, as defined in equations \eqref{rho1}-\eqref{eta1}, or equivalently, via the transition kernel $q$.


	
	\section{Population dynamics\label{Healthcare_process}}
    
The population dynamics follow this sequence: (I) a treatment action is selected and implemented; (II) some patients may abandon treatment; (III) treatment is then administered; (IV) a subset of patients exits the healthcare population; (V) the covariates of the remaining patients evolve; then (VI) the reward is accrued, and finally, (VII) the population is replenished with new patients. We now proceed to formally describe each of these components. At each stage of the process, the treated and untreated measures are updated; we denote by $\rho^{(\cdot)}$ and $\eta^{(\cdot)}$ the corresponding measures at stage $(\cdot)$ of the population transition.


\paragraph{\bf (I) Implementation of action:} in the measurized problem \eqref{nK*tau}, the action is a measure $\tau \in \mMplusX$ that moves a mass of patients from the untreated distribution $\eta$ to the treated distribution $\rho$. As a consequence, the new population of treated and untreated patients become

\begin{align*}
\eta^{(I)}(\mA)&=\eta(\mA) -\tau(\mA) & \forall \mA \in \mBX\\
\rho^{(I)}(\mA)&=  \rho(\mA)+\tau(\mA)& \forall \mA \in \mBX
\end{align*}

\paragraph{\bf (II) Abandonment of treatment:}
Treated patients may choose to leave the treatment group with probability $p_0(\bx)$, which depends on their current covariate vector $\bx$. We assume that this abandonment decision occurs at the beginning of each period, after the action $\tau$ has been implemented. That is, both patients already under treatment and those newly selected through $\tau$ may drop out before the treatment is administered in this period. The function $p_0(\cdot)$ governs the flow of patients from the treated population $\rho + \tau$ back to the untreated population $\eta - \tau$. 
Thus the {\it effective} treated population is given by

$$
\rho^{(II)}(\mathcal{A}) := \int_{\mathcal{A}} (1 - p_0(\bx)) \, d\rho^{(I)}(\bx), \quad \forall \mathcal{A} \in \mathcal{B}(\mathcal{X}),
$$

\noindent 
Similarly, the {\it effective} untreated population includes both those who were not selected for treatment and those who dropped out. It is proportional to the measure

$$
\eta^{(II)}(\mathcal{A}) := \eta^{(I)}(\mathcal{A})  + \int_{\mathcal{A}} p_0(\bx) \, d\rho^{(I)}(\bx), \quad \forall \mathcal{A} \in \mathcal{B}(\mathcal{X}).
$$

\paragraph{\bf (III) Treatment is performed:} At this stage, only the patients who remain in the treatment group, captured by $\rho^{(II)}$, receive the intervention. Thus, the actual proportion of treated patients is $\rho^{(II)}(\mathcal{X})$, which may be strictly less than the planned amount $\rho(\mathcal{X}) + \tau(\mathcal{X})$. This gap reflects the uncertainty introduced by patient dropouts and supports the use of an expected capacity constraint as a practical and reasonable relaxation. In this stage of the transition, the populations of treated and untreated patients do not change, thus $\rho^{(III)}=\rho^{(II)}$ and $\eta^{(III)}=\eta^{(II)}$.

\paragraph{\bf (IV) Reward is accrued:} At this stage, the treated and untreated measures remain unchanged, and the system accrues the reward generated by the entire healthcare population. Let $\tilde{y}_0(\bx)$ and $\tilde{y}_1(\bx)$ denote the reward accrued by a patient with covariates $\bx$ who is currently untreated or treated, respectively. $\tilde{y}_0(\bx)$ and $\tilde{y}_1(\bx)$ can be expected rewards accounting for the probability of death or abandonment of the population. The contribution of the untreated patients to the total reward is:

\begin{align}
\mbox{Untreated reward}&=\int_{\mX} \tilde{y}_0(\bx) d\eta^{(II)}(\bx)=\int_{\mX} \tilde{y}_0(\bx) d\eta^{(I)}(\bx)+\int_{\mX} y_0(\bx) p_0(\bx) d\rho^{(I)}(\bx) \nonumber\\
&=\int_{\mX} \tilde{y}_0(\bx) d(\eta-\tau)(\bx)+\int_{\mX} \tilde{y}_0(\bx) p_0(\bx) d(\rho+\tau)(\bx). \label{untreated_reward}
\end{align}
Similarly, the contribution of the treated patients is
\begin{align}
\mbox{Treated reward}&=\int_{\mX} \tilde{y}_1(\bx) (1-p_0(\bx)) d(\rho+\tau)(\bx). \label{treated_reward}
\end{align}

\noindent Putting \eqref{untreated_reward} and \eqref{treated_reward} together, we have that the reward functions $y_0(\cdot)$ and $y_1(\cdot)$ in the objective of the optimality equations \eqref{K*1} can account for patients dropout by writing

\begin{align*}
y_0(\bx)&=\tilde{y}_0(\bx)\\
y_1(\bx)&=(1-p_0(\bx))\tilde{y}_1(\bx)+p_0(\bx)\tilde{y}_0(\bx).
\end{align*}

\paragraph{\bf (V) Outflow of patients:}
We model whether an untreated patient with covariates $\bx$ exits the healthcare population before the next period using a Bernoulli random variable, where the success probability $p_{d,0}(\bx)$ depends on the patient's covariates. As a result, the distribution of untreated patients who remain in the population is proportional to:

$$
\eta^{(V)}(\mathcal{A}) := \int_{\mathcal{A}} (1 - p_{d,0}(\bx)) \, d\eta^{(II)}(\bx), \quad \forall \mathcal{A} \in \mathcal{B}(\mathcal{X}).
$$


Patients under treatment are also subject to attrition, modeled by a Bernoulli random variable with success probability $p_{d,1}(\bx)$. The distribution of treated patients who remain is proportional to:

$$
\rho^{(V)}(\mathcal{A}) := \int_{\mathcal{A}} (1 - p_{d,1}(\bx)) \, d\rho^{(II)}(\bx), \quad \forall \mathcal{A} \in \mathcal{B}(\mathcal{X}).
$$

\paragraph{\bf (VI) Evolution of covariates:}
The covariates of untreated patients who remain in the population (i.e., patients described by measure $\eta^{(V)}$) evolve according to the stochastic kernel:

\begin{flalign}
Q_{\bx,0}(\cdot \mid \bx) : \mathcal{B}(\mathcal{X}) &\longrightarrow [0,1], \\
\mathcal{A} &\longmapsto Q_{\bx,0}(\mathcal{A} \mid \bx) = \mathbb{P}(\bx' \in \mathcal{A} \mid \bx), \nonumber
\end{flalign}

\noindent where $Q_{\bx,0}(\cdot \mid \bx)$ is a probability measure for all $\bx \in \mX$ and $Q_{\bx,0}(\mathcal{A} \mid \cdot) : \mathcal{X} \rightarrow \mathbb{R}$ is a measurable function for all $\mA \in \mBX$. Following the notation used in \eqref{rho1} and \eqref{eta1}, we use a prime to denote next-period covariates. Therefore, the new population of untreated patients is

$$
\eta^{(VI)}(\mathcal{A}) := \int_{\mathcal{X}} Q_{\bx,0}(\mA \mid \bx) \, d\eta^{(V)}(\bx), \quad \forall \mathcal{A} \in \mathcal{B}(\mathcal{X}).
$$

Patients who remain under treatment throughout the period (i.e., those described by measure $\rho^{(V)}$) evolve according to kernel $Q_{\bx,1}$, thus the new population of treated patients is

$$
\rho^{(VI)}(\mathcal{A}) := \int_{\mathcal{X}} Q_{\bx,1}(\mA \mid \bx) \, d\rho^{(V)}(\bx), \quad \forall \mathcal{A} \in \mathcal{B}(\mathcal{X}).
$$

Although only a small portion of the population may be treated in each period, treatment decisions influence the long-term trajectories of those individuals through the evolution governed by $Q_{\bx,1}$. Even if the covariates of patients who rejoin the healthcare population after treatment in Step (III) evolve according to $Q_{\bx,0}$, their trajectories  may diverge from their original paths as a result of having transitioned under \( Q_{\bx,1} \).

\paragraph{\bf (VII) Inflow of patients:}
At this final stage, before the start of the next period, we assume that the patients who exited the healthcare population in stage (V) are replaced by an inflow of new patients, added to the untreated population and drawn from a fixed distribution $\psi$. Thus the measure of treated patients remains unchanged in this stage, but the measure of untreated patients becomes

$$\eta^{(VII)}(\mA)=\eta^{(VI)}(\mA)+\psi(\mA) \left\{ \int_{\mX} p_{d,0}(\bx)d\eta^{(II)}(\bx)+ \int_{\mX} p_{d,1}(\bx)d\rho^{(II)}(\bx)\right\}$$

This assumes that the total number of patients $n$ remains constant over time, an assumption that is reasonable in PHM settings, where large populations are managed continuously. For instance, Accountable Care Organizations in the United States, which often oversee CMPs, have a median size of around 9,000 patients, with some managing over 50,000. At such scales, small fluctuations in population size are negligible and can be reasonably ignored in the modeling framework.

	\begin{figure}[h!]
			\includegraphics[width=480pt]{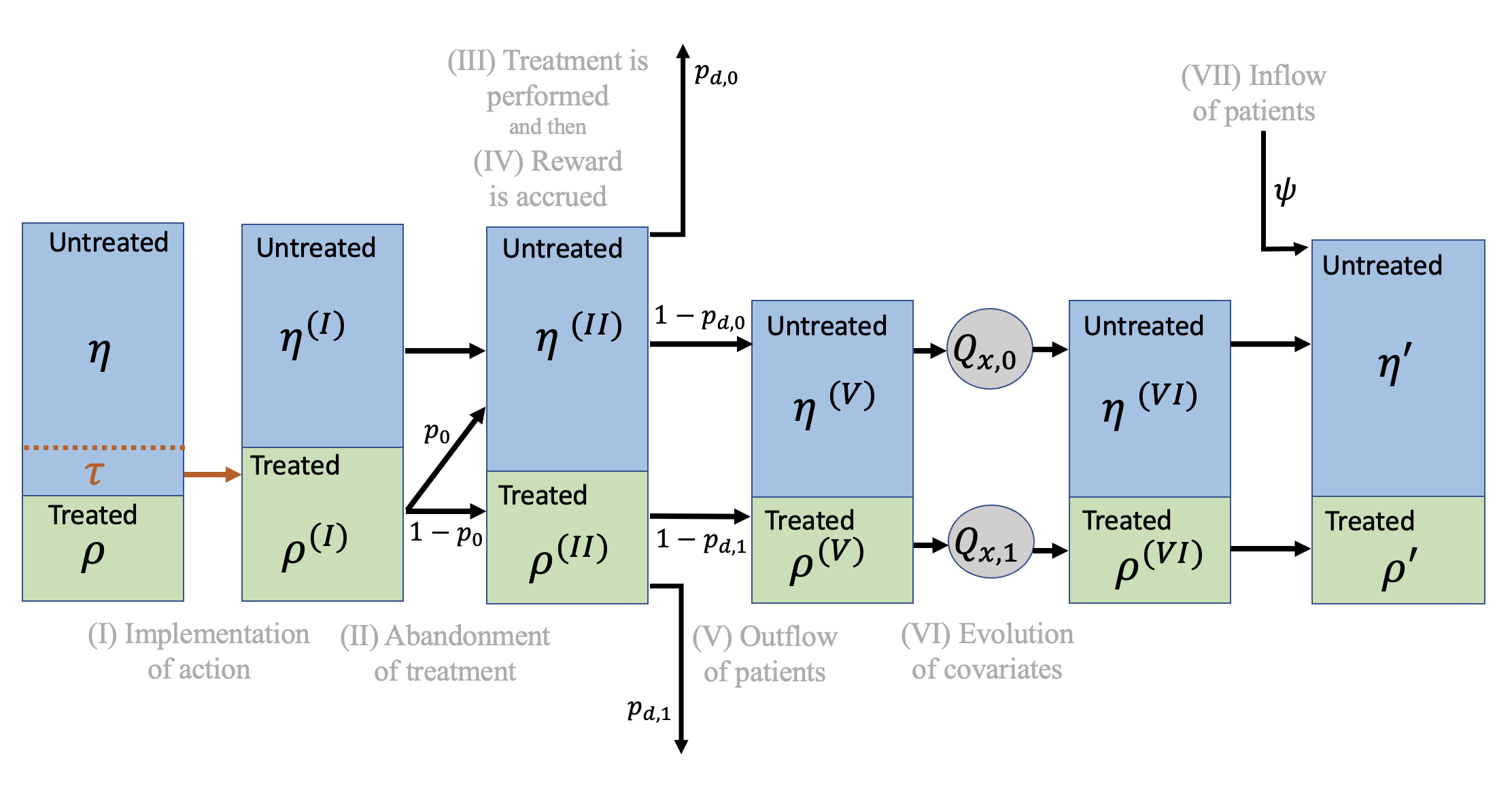}
			\caption{Flowchart illustrating the population dynamics in the Measurized MDP for treatment allocation.\label{graph}}
		\end{figure}

By the end of Stage (VII), the transition is over and the Measurized MDP is in the next period; i.e. $\eta'=\eta^{(VII)}$ and $\rho'=\rho^{(VII)}=\rho^{(VI)}$. Figure \ref{graph} displays a flowchart of the transition. Putting everything together, we can write the deterministic transition $f(\cdot,\cdot)$ in Problem \eqref{K*1} as $(\eta',\rho')=(f_\eta((\eta,\rho),\tau),f_\rho((\eta,\rho),\tau))$, where
		
		\begin{align*}\label{rho_t1} \tag{$\rho'$}
			\rho'(\cdot)=f_\rho((\eta,\rho),\tau))(\cdot)=&\int_\mathcal{X} (1-p_{d,1}(\bx)-p_{0}(\bx))Q_{\bx,1}(\cdot | \bx) d(\rho+\tau)(\bx)\\
	\label{eta_t1} \tag{$\eta'$}
		\eta'(\cdot)=f_\eta((\eta,\rho),\tau))(\cdot)=
		&\medmath{ \psi(\cdot) \left\{\int_{\mathcal{X}} \left[p_{d, 1}(\mathbf{x})+p_{0}(\bx) p_{d, 0}(\bx)\right] d(\rho_t+\tau_t)(\mathbf{x})+\int_{\mathcal{X}} p_{d, 0}(\mathbf{x}) d(\eta_t-\tau_t)(\mathbf{x}) \right\} }\\
		& \medmath{+\int_{\mathcal{X}}(1-p_{d,0}(\bx))Q_{\bx,0}(\cdot|\bx) d(\eta_t-\tau_t)(\bx) +\int_{\mathcal{X}}p_{0}(\bx)(1-p_{d,0}(\bx))Q_{\bx,0}(\cdot|\bx) d(\rho_t+\tau_t)(\bx)}
	\end{align*}

	\noindent	Hence, the distribution of any patient in the Healthcare population for the next period is $\mu'=\rho'+\eta'$. The following lemma proves that $\mu'$ is well defined; i.e., that it is a probability distribution.
		
		\begin{lemma}\label{lemma:mu_prob}
			$\mu'=\rho'+\eta'$ is a probability measure.
		\end{lemma}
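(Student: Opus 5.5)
The plan is to verify the two defining properties of a probability measure for $\mu'=\rho'+\eta'$ separately: (a) $\mu'$ is a nonnegative, countably additive set function on $\mBX$, and (b) $\mu'(\mX)=1$. Throughout, I will work from the explicit expressions \eqref{rho_t1} and \eqref{eta_t1}. I will use the fact that $(\eta,\rho)\in\mS_\mM$, so that $\mu=\eta+\rho\in\mMpX$, and that $\tau\in\mT(\eta,\rho)$.

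For (a), the first step is to observe that $\tau\in\mT(\eta,\rho)$ gives $\tau(\mA)\le\eta(\mA)$ for all $\mA\in\mBX$. Hence $\eta-\tau\in\mMplusX$, and trivially $\rho+\tau\in\mMplusX$. Every term in \eqref{rho_t1} and \eqref{eta_t1} then has one of two forms:
\begin{itemize}
\item $\int_\mX c(\bx)\,Q_{\bx,j}(\cdot\mid\bx)\,d\sigma(\bx)$, where $\sigma\in\{\rho+\tau,\ \eta-\tau\}$ is a nonnegative measure and $c$ is a measurable weight; or
\item $\psi(\cdot)$ multiplied by a nonnegative constant.
\end{itemize}
Since $Q_{\bx,j}(\mA\mid\cdot)$ is measurable and $Q_{\bx,j}(\cdot\mid\bx)$ is a probability measure, countable additivity of each term follows from the monotone convergence theorem applied to $\sum_k Q_{\bx,j}(\mA_k\mid\bx)$. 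Nonnegativity requires every weight $c$ to be nonnegative. For $p_{d,0}$, $p_{d,1}$, $p_0$, and the products $p_0(1-p_{d,0})$ and $p_{d,1}+p_0p_{d,0}$, this is immediate because they are probabilities. The only delicate weight is $1-p_{d,1}(\bx)-p_0(\bx)$ in $\rho'$. I expect this to be the main obstacle. I would handle it by making explicit the modelling requirement implicit in stages (II)--(V), namely $p_0(\bx)+p_{d,1}(\bx)\le 1$. Abandonment and exit of a treated patient are treated there as mutually exclusive outflows, as the exit mass $p_{d,1}+p_0p_{d,0}$ in \eqref{eta_t1} indicates. Under that requirement the weight is nonnegative. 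The alternative reading, with the weight $(1-p_0)(1-p_{d,1})$, would also be nonnegative, but it would break the mass balance in (b), so I would adopt the consistent reading.

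For (b), I will evaluate at $\mA=\mX$ and use $Q_{\bx,j}(\mX\mid\bx)=1$ and $\psi(\mX)=1$. Collecting everything integrated against $\rho+\tau$, the integrand is
\[
(1-p_{d,1}-p_0)+(p_{d,1}+p_0p_{d,0})+p_0(1-p_{d,0})=1 .
\]
Collecting everything integrated against $\eta-\tau$, the integrand is
\[
p_{d,0}+(1-p_{d,0})=1 .
\]
Therefore
\[
\mu'(\mX)=(\rho+\tau)(\mX)+(\eta-\tau)(\mX)=\rho(\mX)+\eta(\mX)=\mu(\mX)=1 .
\]
The final equality holds because $\mu\in\mMpX$.

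Combining (a) and (b), $\mu'$ is a probability measure. As a by-product, $(\eta',\rho')\in\mS_\mM$, since $\eta'$ and $\rho'$ are each nonnegative measures. This shows that the transition $f$ maps $\mS_\mM$ into itself.
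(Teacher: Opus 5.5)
Your proposal is correct and follows the same route as the paper. Emptyset and additivity come from $\rho'$ and $\eta'$ being nonnegative measures, and $\mu'(\mathcal{X})=1$ comes from evaluating \eqref{rho_t1} and \eqref{eta_t1} at $\mathcal{X}$ with $Q_{\bx,0}(\mathcal{X}|\bx)=Q_{\bx,1}(\mathcal{X}|\bx)=\psi(\mathcal{X})=1$ and summing. You are also more careful than the paper, which simply asserts nonnegativity and checks only finite additivity: your use of $\tau\le\eta$ to get $\eta-\tau\in\mMplusX$, the explicit requirement $p_0(\bx)+p_{d,1}(\bx)\le 1$ for the weight in $\rho'$, and countable additivity are exactly the points the paper leaves implicit.
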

		
		\begin{proof}\\
			{\it Proof.} 
			For $\mu'$ to be a probability measure we need to prove that
			\begin{itemize}
				\item[(i)] $\mu'(\emptyset)=0$
				\item[(ii)] $\mu'(\cup_{j=1}^J \mathcal{A}_j )=\sum_{j=1}^J\mu'( \mathcal{A}_j )$
				\item[(iii)] $\mu'(\mathcal{X})=1$
			\end{itemize}
			
			(i) and (ii) follow from the fact that $\rho'$ and $\eta'$ are non-negative measures and thus verify (i) and (ii) themselves. To prove (iii), we evaluate \eqref{rho_t1} and \eqref{eta_t1} in $\mathcal{X}$ and add them together, having
			
		\begin{flalign*}
				\mu'(\mathcal{X})&=\rho'(\mathcal{X})+\eta'(\mathcal{X})\\
				&=\int_\mathcal{X} (1-p_{d,1}(\bx)-p_{0}(\bx))d(\rho+\tau)(\bx)
				+\int_{\mathcal{X}}p_{0}(\bx) p_{d, 0}(\bx)d(\rho+\tau)(\bx)
				+\int_{\mathcal{X}} p_{d, 1}(\mathbf{x}) d(\rho+\tau)(\mathbf{x})\\
			& +\int_{\mathcal{X}}(1-p_{d,0}(\bx)) d(\eta-\tau)(\bx)
			+\int_{\mathcal{X}}p_{0}(\bx)(1-p_{d,0}(\bx)) d(\rho+\tau)(\bx)
			 +\int_{\mathcal{X}} p_{d, 0}(\mathbf{x}) d(\eta-\tau)(\mathbf{x})=1,
			\end{flalign*}
			
			\noindent where in the second line we use the fact that $Q_{\bx,1}(\mathcal{X}|\bx)=Q_{\bx,0}(\mathcal{X}|\bx)=\psi(\mathcal{X})=1$ for all $\bx \in \mathcal{X}$. \hfill $\square$
			
		\end{proof}

Given well-defined dynamics and valid transitioned measures, a natural question is: does the sequence $\{\mu_t\}_{t \geq 0}$ converge to a limiting distribution? Consider the uncontrolled setting where no intervention is deployed, i.e., $\tau_t(\mathcal{A}) = \rho_t(\mathcal{A}) = 0$ for all $\mathcal{A} \in \mathcal{B}(\mathcal{X})$ and all $t$. Under this assumption, Proposition \ref{prop_convergence} in Appendix C shows that, under suitable conditions, $\mu_t$ converges to a unique invariant distribution as $t \to \infty$. However, this limiting distribution is unlikely to be optimal, as treatment is assumed to yield population-level benefits. This raises a natural question: can we deploy treatment in a way that drives the population toward an optimal invariant state? In other words, can we optimize for the best steady state in \eqref{K*1}? The next section explores this question.


\section{Targeting Optimal Steady-State Behavior in the Population \label{sec:equilibrium_approximation}}

Building on the dynamics described in the previous section, our goal is to design a treatment policy that guides the population toward an invariant distribution which maximizes long-run population health. To find the best steady-state in \eqref{K*1}, one needs to solve the following equilibrium problem

	\begin{flalign}
			\sup_{\substack{(\eta,\rho) \in \mS_\mM \\ \tau \in \mT(\eta,\rho)}} & \ \mathbb{E}_{\rho+\tau}\left[  y_1(\bx)\right]       + \mathbb{E}_{\eta-\tau}\left[  y_0(\bx)\right]  \label{equilibrium} \tag{E}\\
			\mbox{s.t.} &\quad (\eta,\rho)=f((\eta,\rho),\tau), \nonumber
		\end{flalign}

\noindent where $\mS_\mM$ is the state space of the measurized problem defined in \eqref{S_M} and $\mT(\eta,\rho)$ is the set of implementable actions defined in \eqref{T_eta,rho}.
The equilibrium constraint $(\eta, \rho) = f((\eta, \rho), \tau)$ enforces that the distributions of treated and untreated patients remain unchanged across time; that is, $\eta'(\mathcal{A}) = \eta(\mathcal{A})$ and $\rho'(\mathcal{A}) = \rho(\mathcal{A})$ for all $\mathcal{A} \in \mathcal{B}(\mathcal{X})$. It is easy to see that the problem \eqref{equilibrium} is linear.

\begin{lemma}
    Problem \eqref{equilibrium} is an infinite-dimensional LP.
\end{lemma}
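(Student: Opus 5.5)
The plan is to exhibit \eqref{equilibrium} explicitly as a linear program whose decision variables are the three nonnegative measures $(\eta,\rho,\tau)\in\mMplusX^3$. This vector space is infinite-dimensional whenever $\mX$ is not finite. The key observation is that every ingredient of the problem depends on these measures only through integrals of fixed measurable functions. Such integrals are linear in the measure. So the work is to check, term by term, that the objective and every constraint are affine in $(\eta,\rho,\tau)$.

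First, the objective. We have
\[
\mathbb{E}_{\rho+\tau}[y_1(\bx)]+\mathbb{E}_{\eta-\tau}[y_0(\bx)]=\int_\mX y_1\,d\rho+\int_\mX (y_1-y_0)\,d\tau+\int_\mX y_0\,d\eta,
\]
which is a linear functional of $(\eta,\rho,\tau)$. Here I assume $y_0,y_1$ are bounded measurable, so the integrals are finite; equivalently, the objective is the pairing of the measures with bounded functions.

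Second, the feasible set. The condition $(\eta,\rho)\in\mS_\mM$ amounts to nonnegativity of $\eta$ and $\rho$ together with the single linear equality $\eta(\mX)+\rho(\mX)=1$. Membership $\tau\in\mT(\eta,\rho)$ gives nonnegativity of $\tau$, the linear inequality $\rho(\mX)+\tau(\mX)\le m/n$, and the family $\tau(\mA)-\eta(\mA)\le 0$ for all $\mA\in\mBX$. The last is linear in $(\eta,\tau)$, and it is equivalent to requiring that $\eta-\tau$ lie in the cone $\mMplusX$.

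Third, the equilibrium constraint. From \eqref{rho_t1} and \eqref{eta_t1}, for each fixed $\mA\in\mBX$ the quantities $\rho'(\mA)$ and $\eta'(\mA)$ are sums of integrals of the form $\int_\mX h_\mA(\bx)\,d\sigma(\bx)$. Each integrand $h_\mA$ is fixed and measurable: products of $p_0,p_{d,0},p_{d,1}$ with $Q_{\bx,0}(\mA\mid\bx)$ or $Q_{\bx,1}(\mA\mid\bx)$. Each $\sigma$ is one of $\rho+\tau$ or $\eta-\tau$. The only potentially nonlinear term is the inflow term, in which $\psi(\mA)$ multiplies a bracket. Since $\psi$ is a fixed measure, however, $\psi(\mA)$ is a constant, and the bracket is itself a linear functional of $(\eta,\rho,\tau)$. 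Hence $f$ is linear in $(\eta,\rho,\tau)$, and $(\eta,\rho)=f((\eta,\rho),\tau)$ is a family of linear equalities, one for each $\mA\in\mBX$.

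Combining these three steps, \eqref{equilibrium} maximizes a linear functional over the intersection of a convex cone with infinitely many linear equalities and inequalities, posed in an infinite-dimensional space of measures. That is an infinite-dimensional LP in the sense of Anderson--Nash. The main point needing care is the inflow term: one must verify that the mass it carries is linear rather than bilinear in the variables. The measurability and integrability of the integrands is a secondary check; assuming bounded kernels and bounded rewards suffices for it.
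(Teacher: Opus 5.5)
Your proposal is correct and follows the paper's route: both arguments rest on the linearity of integration in the measure, applied to the objective, to the constraints defining $\mathcal{S}_{\mathcal{M}}$ and $\mathcal{T}(\eta,\rho)$, and to the transition maps $f_\rho, f_\eta$. Yours is more explicit than the paper's proof, in particular in checking that the inflow term is linear rather than bilinear because $\psi$ is fixed.
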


\begin{proof}{Proof:}
The constraints in $\mS_\mM$ and $\mT(\cdot,\cdot)$ can be written as expected value constraints. We know that expectation is linear in the measure with respect to which we are integrating. More specifically, let $\zeta,\varkappa \in \mMplusX$ be two finite measures, and let $g : \mathcal{X} \to \mathbb{R}$ be a measurable function such that the integrals exist. For any scalars $a, b \in \mathbb{R}$, the following holds:
$
\int_{\mathcal{X}} g(x) \, d(a\zeta + b\varkappa)(x) = a \int_{\mathcal{X}} g(x) \, d\zeta(x) + b \int_{\mathcal{X}} g(x) \, d\varkappa(x).
$
Similarly, the terms in $f_\rho$ and $f_\eta$ can be expressed as expectations of certain functions with respect to linear transformations of $\eta$, $\rho$, and $\tau$, thereby endowing \eqref{equilibrium} with a linear structure. \hfill $\square$

\end{proof}

One might ask whether there is a deeper connection between the infinite-horizon discounted problem \eqref{K*1}, introduced earlier, and the equilibrium formulation in \eqref{equilibrium}, beyond the latter simply seeking a steady-state solution to the former. Interestingly, \cite{measurized} shows that, under certain assumptions, the Average Reward (AR) criterion can be interpreted as seeking the optimal equilibrium distribution, as defined in \eqref{equilibrium}. More specifically, the optimal AR, being constant, matches the objective value of the equilibrium problem. Moreover, since the vanishing discount approach is valid in the measurized framework, the equilibrium formulation \eqref{equilibrium} can also be viewed as the limiting case of the discounted problem \eqref{K*1} as the discount factor $\alpha \to 1$. 

We now turn to the question of whether an optimal equilibrium solution to problem \eqref{equilibrium} exists. Proposition \ref{prop_convergence} in Appendix C shows that there exists an equilibrium when no treatment is implemented (i.e., when $\rho$ and $\tau$ have no mass). To ensure \eqref{equilibrium} has more interesting solutions in our setting, we adopt:

		\begin{assumption}
			For any given $\tau\in \mMplusX$, there exists at least one pair $(\eta,\rho)\in \mS_{\mM}$ such that $\tau\in \mathcal{T}(\eta,\rho)$ and $(\eta,\rho)=f((\eta,\rho),\tau)$.
		\end{assumption}

\noindent This condition is standard in the AR framework. An equivalent version of it in the classical (non-measurized) stochastic setting is typically required to ensure that the Policy Iteration Algorithm for the AR problem is well defined \citep{PIA}.

To promote flexibility and diversity in the treated and untreated groups, and to facilitate the analysis of the equilibrium problem \eqref{equilibrium}, we consider a relaxation of the equilibrium constraint. Instead of enforcing that the distributions of treated and untreated patients remain unchanged (i.e., $\eta' = \eta$ and $\rho' = \rho$), we require only that the overall population distribution $\mu$ remains constant across time periods ($\mu' = \eta' + \rho' = \eta + \rho = \mu$); that is, :

\begin{align}
\eta(\mA)+\rho(\mA)&= \psi(\mA) \left\{\int_{\mathcal{X}} \left[p_{d, 1}(\mathbf{x})+p_{0}(\bx) p_{d, 0}(\bx)\right] d(\rho+\tau)(\mathbf{x})+\int_{\mathcal{X}} p_{d, 0}(\mathbf{x}) d(\eta-\tau)(\mathbf{x}) \right\} \nonumber\\
		& +\int_{\mathcal{X}}(1-p_{d,0}(\bx))Q_{\bx,0}(\mA|\bx) d(\eta-\tau)(\bx)
		+\int_{\mathcal{X}}p_{0}(\bx)(1-p_{d,0}(\bx))Q_{\bx,0}(\cdot|\bx) d(\rho+\tau)(\bx)\nonumber\\
       & +\int_\mathcal{X} (1-p_{d,1}(\bx)-p_{0}(\bx))Q_{\bx,1}(\cdot | \bx) d(\rho+\tau)(\bx)
\qquad \qquad \forall \mA\in \mathcal{B}(\mathcal{X}) \label{relaxed_equilibrium_constraint}
\end{align}


Directly optimizing \eqref{equilibrium}, whether or not the equilibrium constraint is relaxed, may lead to ethically problematic outcomes, such as allowing higher mortality rates in pursuit of other health metrics. To address this concern, and as discussed in Contribution (v), we introduce an explicit constraint on the population-level mortality rate. This constraint reflects the core medical ethics principle of non-maleficence (``first, do no harm") \citep{beauchamp2003methods}. We discuss the modelling of such constraint in the subsequent section.

\subsection{Constraining Population-Level Mortality}

A key modeling choice in our approach is to couple the patient inflow and outflow processes. This coupling is necessary because decoupling these processes could lead to unstable population dynamics. While enforcing population stability through policy decisions is mathematically feasible, such policies could deliberately deny treatment to certain patient groups, increasing mortality (outflow, Stage V of the population dynamics) among those associated with lower rewards. This, in turn, could replace them with a healthier inflow of patients (Stage VII) yielding higher rewards, violating the fundamental medical principle of non-maleficence. To address this challenge, we require that the expected mortality rate across the population does not exceed a threshold $\delta^*$:

$$\mathbb{E}_{\rho+\tau} [p_{d,1}(\bx)+p_{0}(\bx)p_{d,0}(\bx)]+\mathbb{E}_{\eta-\tau}[p_{d,0}(\bx)] \leq \delta^*.$$

\noindent The ethically-constrained problem with relaxed equilibrium constraint reads

\begin{flalign}
			\sup_{\substack{(\eta,\rho) \in \mS_\mM \\ \tau \in \mT(\eta,\rho)}} & \ \ \mathbb{E}_{\rho+\tau}\left[  y_1(\bx)\right]       + \mathbb{E}_{\eta-\tau}\left[  y_0(\bx)\right] \label{equilibrium+mortality} \tag{E$'$}\\
			\mbox{s.t.} &\ \ \eta'(\mA)+\rho'(\mA)=\eta(\mA)+\rho(\mA) \qquad \forall \mA \in \mBX \nonumber\\
            & \ \ \mathbb{E}_{\rho+\tau} [p_{d,1}(\bx)+p_{0}(\bx)p_{d,0}(\bx)]+\mathbb{E}_{\eta-\tau}[p_{d,0}(\bx)] \leq \delta^* \nonumber
\end{flalign}

To simplify the problem above, we perform the change of variables $\xi := \eta - \tau$ and $\varrho := \rho + \tau$, and plug in \eqref{relaxed_equilibrium_constraint}, yielding:

\begin{align}
	 \sup_{\xi,\varrho\in \mMplusX} &\ \mathbb{E}_{\xi}\left[y_0(\bx)\right]+\mathbb{E}_{\varrho}\left[y_1(\bx)\right] \label{P} \tag{P}\\
	\text { s.t. } & \ \xi(\mA)+\varrho(\mA)=\mathbb{E}_\varrho \left[ \left(1-p_{d, 1}(\bx)-p_0(\bx)\right) Q_1(\mA|\bx)+p_0(\bx)(1-p_{d,0}(\bx))Q_0(\mA|\bx)\right] \nonumber\\
    &\quad +\mathbb{E}_\xi \left[ \left(1-p_{d,0}(\bx)\right) Q_0(\mA|\bx)\right] +\psi(\mA)\left\{\mathbb{E}_\varrho \left[ (p_{d,1}(\bx)+p_{0}(\bx)p_{d,0}(\bx))\right] +\mathbb{E}_\xi \left[ p_{d,0}(\bx)\right]\right\}, \quad  \forall \mA\in \mathcal{B}(\mathcal{X}) \nonumber\\
	& \ \mathbb{E}_{\varrho} [p_{d,1}(\bx)+p_{0}(\bx)p_{d,0}(\bx)]+\mathbb{E}_{\xi}[p_{d,0}(\bx)] \leq \delta^* \nonumber\\
	&\ \xi(\mathcal{X})+\varrho(\mathcal{X})=1 \nonumber\\ 
	&\ \varrho(\mathcal{X}) \leq \frac{m}{n}\nonumber
\end{align}

Although \eqref{P} remains an infinite-dimensional linear program, it has two variables instead of three. To derive the corresponding dual formulation, we introduce bounded measurable functions as dual variables of the constraints in measures:

\begin{remark}[\cite{hernandez2012discrete}, Section 6.3]

Let $\mathcal{G}(\mathcal{X})$ be the vector space of all bounded measurable functions $g:\mathcal{X}\to \mathbb{R}$ equipped with the sup norm, and let $\mathcal{M}(\mathcal{X})$ be the vector space of all finite signed measures on $\mathcal{X}$ with finite total variation. With respect to the bilinear form
$
\langle\mu, g\rangle:=\int_{\mathcal{X}} g d \mu, \mbox{ with } \mu \in \mathcal{M}(\mathcal{X}),  g \in \mathcal{G}(\mathcal{X})
$ , $(\mathcal{M}(\mathcal{X}), \mathcal{G}(\mathcal{X}))$ is a dual pair.
\end{remark}

Let $h(\cdot)$ be the dual variable associated with the relaxed (although infinite-dimensional) equilibrium constraint. The remaining constraints are scalar, so we introduce $\lambda \geq 0$ for the mortality constraint, $\vartheta$ for the total probability constraint, and $\beta \geq 0$ for the capacity constraint. The resulting dual problem is:

\begin{align} 
	 \inf_{\substack{h(\cdot)\in \mathcal{G}(\mathcal{X})\\ \lambda,\beta\geq 0, \vartheta}}&  \ 
	\vartheta+\frac{m}{n} \cdot \beta+\delta^* \cdot \lambda \label{D} \tag{D}\\ 
	 \text { s.t. }
	 & \ y_0(\bx)\leq -h(\bx)+\left(1-p_{d,0}(\bx)\right) \mathbb{E}_{Q_0}\left[h\left(\bx^{\prime}\right) \mid \bx\right]+p_{d,0}(\bx) \cdot \mathbb{E}_\psi\left[h\left(\bx^{\prime}\right)\right]+ p_{d,{0}}(\bx)\lambda+\vartheta,  \ \quad \forall \bx \in \mathcal{X} \nonumber\\
	& \ y_1(\bx) \leq -h(\bx)+\left(1-p_{d,1}(\bx) - p_0(\bx)\right) \mathbb{E}_{Q_1}\left[h\left(\bx^{\prime}\right) \mid \bx\right]+(p_{d,1}(\bx)+p_0(\bx)p_{d,0}(\bx)) \mathbb{E}_\psi\left[h\left(\bx^{\prime}\right)\right]\nonumber\\
    & \qquad +p_0(\bx)(1-p_{d,0}(\bx)) \mathbb{E}_{Q_0}\left[h\left(\bx^{\prime}\right) \mid \bx\right]
	 + (p_{d,1}(\bx) + p_0(\bx)p_{d,0}(\bx))\lambda+\vartheta+\beta , \qquad  \qquad\forall \bx \in \mathcal{X}\nonumber
\end{align}

Although \eqref{P} is a linear program, the fact that it is infinite-dimensional means that strong duality with its dual \eqref{D} is not guaranteed a priori. However, we will show that strong duality holds under mild conditions. Specifically, we adopt:

\begin{assumption} \label{assumption:dualpair}
 The functions $y_0$ and $y_1$ are in $\mathcal{G}(\mathcal{X})$.
\end{assumption}

Now we are equipped to prove the following proposition:


	\begin{proposition}[Strong Duality] \label{prop:strong_duality}
		Suppose the primal problem \eqref{P} is feasible, 
		then strong duality holds between optimization problems \eqref{P} and $\eqref{D}$.
	\end{proposition}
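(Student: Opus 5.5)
Our plan is to cast \eqref{P} and \eqref{D} as a pair of infinite-dimensional linear programs in the sense of Anderson and Nash, as in \cite{hernandez2012discrete}, Section 6.3. We then verify the standard sufficient condition for absence of a duality gap: the primal is consistent and the set of achievable (constraint, objective) pairs is closed in an appropriate weak topology.

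\textbf{Primal and dual spaces.} Take the primal variable $(\xi,\varrho)\in \mathcal{M}(\mathcal{X})\times\mathcal{M}(\mathcal{X})$ with positive cone $\mMplusX\times\mMplusX$, paired with $\mathcal{G}(\mathcal{X})\times \mathcal{G}(\mathcal{X})$. The constraint space is $\mathcal{M}(\mathcal{X})\times\mathbb{R}^3$, collecting the measure equation, the mortality row, the mass row and the capacity row (add slacks to make the latter two rows equalities). This space is paired with $\mathcal{G}(\mathcal{X})\times\mathbb{R}^3$, which holds the dual variables $(h,\lambda,\vartheta,\beta)$.

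\textbf{Adjointness.} Write the constraint operator $L(\xi,\varrho)$ and check that $\langle L(\xi,\varrho),(h,\lambda,\vartheta,\beta)\rangle = \langle (\xi,\varrho), L^*(h,\lambda,\vartheta,\beta)\rangle$. This is a Fubini computation using that $Q_0,Q_1$ are stochastic kernels, that $\psi$ is a probability measure, and that $p_0,p_{d,0},p_{d,1}\in[0,1]$ are measurable. It shows $L$ is weakly continuous, since $L^*$ maps $\mathcal{G}\times\mathbb{R}^3$ into $\mathcal{G}\times\mathcal{G}$: for instance $\bx\mapsto \mathbb{E}_{Q_0}[h(\bx')\mid\bx]$ is bounded and measurable. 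Reading off the components of $L^*$ reproduces exactly the left-minus-right sides of the two constraints in \eqref{D}. Assumption \ref{assumption:dualpair} guarantees that the cost $(y_0,y_1)$ lies in $\mathcal{G}\times\mathcal{G}$, so the primal objective is a continuous linear functional. Weak duality ($\text{val}\eqref{P}\le \text{val}\eqref{D}$) then follows in one line.

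\textbf{No duality gap.} We would invoke the result (\cite{hernandez2012discrete}, Thm.~6.3.x / Anderson–Nash Thm.~3.9) that if \eqref{P} is consistent and the set
\[
H:=\{(L(\xi,\varrho),\langle(\xi,\varrho),(y_0,y_1)\rangle + r): \xi,\varrho\ge 0,\ r\ge 0\}
\]
is weakly closed, then there is no duality gap and \eqref{P} is solvable. Feasibility is given by hypothesis.

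\textbf{Main obstacle: closedness of $H$.} The key point is that every feasible or near-feasible $(\xi,\varrho)$ has total mass controlled by the row $\xi(\mathcal{X})+\varrho(\mathcal{X})=1$. Hence any net in $H$ converging weakly comes from primal measures in the bounded set $\{\xi(\mathcal{X})+\varrho(\mathcal{X})\le C\}$. We would argue that this set is compact in the $\sigma(\mathcal{M},\mathcal{G})$ topology. This is where we expect trouble: bounded sets of $\mathcal{M}(\mathcal{X})$ are not $\sigma(\mathcal{M},\mathcal{G})$-compact in general.

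Our first attempt is to embed into $\mathcal{G}(\mathcal{X})^*$, where Banach–Alaoglu applies, and extract a subnet converging to a finitely additive limit $(\xi^*,\varrho^*)$. Continuity of $L$ and of the objective would then pass to the limit. Because the limiting constraint value is a countably additive measure, we would use a Yosida–Hewitt decomposition to discard the purely finitely additive part without increasing constraint residuals or decreasing the objective. This uses boundedness of $y_0,y_1$ and nonnegativity.

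If that is too delicate, the fallback is to assume, as is standard, that $\mathcal{X}$ is compact, the kernels are weakly continuous, and $y_0,y_1$ are continuous. We would then work with weak convergence and Prokhorov's theorem to obtain compactness of the bounded primal set, making $H$ closed. Either way, closedness of $H$ combined with feasibility yields $\text{val}\eqref{P}=\text{val}\eqref{D}$.
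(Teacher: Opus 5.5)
\textbf{There is a genuine gap, and it is where you expected it.} Your proof rests on the closedness of $H$ in the $\sigma(\mathcal{M}(\mathcal{X}),\mathcal{G}(\mathcal{X}))$ topology, and you never establish it. (A minor point: for the maximization \eqref{P} the epigraph term should be $-r$, not $+r$.)

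The Yosida--Hewitt repair does not work. Let $(\xi^*,\varrho^*)$ be a weak$^*$ cluster point in $\mathrm{ba}(\mathcal{X})^2$, and keep only its countably additive parts $(\xi^c,\varrho^c)$. Three things break:
\begin{itemize}
\item the mass row fails, since $\xi^c(\mathcal{X})+\varrho^c(\mathcal{X})<1$ whenever the purely finitely additive part is nonzero;
\item the objective drops by $\langle\xi^p,y_0\rangle+\langle\varrho^p,y_1\rangle$, which is positive whenever the rewards are positive ``at infinity'';
\item the measure equation is not inherited. The constraint operator does not send purely finitely additive measures to purely finitely additive ones: the $\psi$-term, and integrals such as $\int Q_0(\cdot\mid\bx)\,d\xi^p(\bx)$, can be countably additive. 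So $L(\xi^c,\varrho^c)\neq b$ in general.
\end{itemize}

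In fact no argument can close $H$ from the stated hypotheses, because the conclusion itself can fail. Consider the following example.
\begin{itemize}
\item Take $\mathcal{X}=\{0,1,2,\dots\}$ and $p_0\equiv p_{d,0}\equiv p_{d,1}\equiv 0$.
\item Take $Q_0=Q_1=Q$ with $Q(\cdot\mid 0)=\delta_1$ and $Q(\cdot\mid k)=\frac{2}{k+2}\delta_0+\frac{k}{k+2}\delta_{k+1}$ for $k\ge 1$ (Dirac masses).
\item Take $y_0=y_1$ equal to $0$ at $0$ and to $1$ elsewhere, so Assumption \ref{assumption:dualpair} holds.
\end{itemize}
The chain is positive recurrent with unique invariant law $\mu^*(0)=\frac13$, $\mu^*(k)=\frac{2}{3k(k+1)}$. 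Hence \eqref{P} is feasible for any $\delta^*\ge 0$, with value $\frac23$.

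In \eqref{D}, the untreated rows for $k\ge1$ read $1\le \mathbb{E}_Q[h(\bx')\mid k]-h(k)+\vartheta$. Suppose $\vartheta<1$ and $|h|\le B$. Iterating along the chain up to the hitting time $T_0$ of state $0$ gives $(1-\vartheta)\,\mathbb{E}_k[T_0]\le 2B$, whereas $\mathbb{E}_k[T_0]=k+1$. So $\vartheta\ge 1$, and \eqref{D} has value $1$, not $\frac23$.

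The near-feasible points witnessing non-closedness are Ces\`aro averages of long runs started far out. Their constraint residual is $O(1/N)$ in total variation and their objective tends to $1$. Their weak$^*$ cluster points are $Q$-invariant, purely finitely additive ``mass at infinity'', so Yosida--Hewitt leaves nothing to keep.

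\textbf{Your fallback does go through, but for a different proposition.} Assume $\mathcal{X}$ compact, $Q_0,Q_1$ weakly continuous, $p_0,p_{d,0},p_{d,1}$ continuous, and $y_0,y_1$ upper semicontinuous. Then a bounded net has a weakly convergent subnet. Feller continuity, plus the fact that $C(\mathcal{X})$ separates measures, identifies the limit of the constraint values as $L(\xi,\varrho)$. Upper semicontinuity handles the objective, so $H$ is closed even in $\sigma(\mathcal{M},\mathcal{G})$. These extra hypotheses are exactly what exclude the example above.

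\textbf{The paper's own proof does not supply the missing step either.} It argues on the dual side. It perturbs the right-hand side $(y_0,y_1)$ of \eqref{De} in sup norm, notes that $h\equiv0$, $\lambda=\beta=\vartheta_2=0$, $\vartheta_1=\max\{\sup y_0',\sup y_1',0\}$ stays feasible, and invokes Theorem 8 of \cite{anderson1983review}. This avoids compactness only through the assertion that the Mackey topology of the pair $(\mathcal{G}(\mathcal{X}),\mathcal{M}(\mathcal{X}))$ is the sup-norm topology. That holds only for finite $\mathcal{X}$: the norm dual of $\mathcal{G}(\mathcal{X})$ is $\mathrm{ba}(\mathcal{X})$. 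So the argument really shows no gap against the finitely additive relaxation of \eqref{P}, which is the same obstruction you hit; in the example that relaxation has value $1$. For finite $\mathcal{X}$, as in the experiments, \eqref{P} is a feasible, bounded finite LP and strong duality is immediate.
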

	
	\begin{proof}{Proof} 

	The above dual optimization problem \eqref{D} can be reformulated as the following equivalent optimization problem in the standard equality form:
    
\begin{align}
		 \inf_{\substack{h_1(\cdot),h_2(\cdot)\geq 0\\ s_1(\cdot),s_2(\cdot)\geq 0\\ \lambda,\vartheta_1, \vartheta_2,\beta\geq 0}}  & \
		\vartheta_1-\vartheta_2+\frac{m}{n} \cdot \beta+\delta^* \cdot \lambda \label{De} \tag{D$_e$}\\ 
		\text { s. t. }
		& \ y_0(\bx)=-h_1(\bx)+h_2(\bx) +\left(1-p_{d,0}(\bx)\right) \mathbb{E}_{Q_0}\left[h_1\left(\bx^{\prime}\right) - h_2\left(\bx^{\prime}\right) \mid \bx\right]+p_{d,0}(\bx) \cdot \mathbb{E}_\psi\left[h_1\left(\bx^{\prime}\right) - h_2\left(\bx^{\prime}\right) \right]\nonumber \\
		&  \qquad + p_{d,{0}}(\bx)\lambda+\vartheta_1-\vartheta_2 -s_1(\bx), \hspace{8cm} \forall \bx \in \mathcal{X} \nonumber\\
		&\  y_1(\bx)=-h_1(\bx)+h_2(\bx)+\left(1-p_{d,1}(\bx) - p_0(\bx)\right) \mathbb{E}_{Q_1}\left[h_1\left(\bx^{\prime}\right) - h_2\left(\bx^{\prime}\right)  \mid \bx\right] \nonumber\\
		& \qquad + (p_{d,1}(\bx)+p_0(\bx)p_{d,0}(\bx)) \mathbb{E}_\psi\left[h_1\left(\bx^{\prime}\right) - h_2\left(\bx^{\prime}\right) \right]
		+ p_0(\bx)(1-p_{d,0}(\bx)) \mathbb{E}_{Q_0}\left[h_1\left(\bx^{\prime}\right) - h_2\left(\bx^{\prime}\right) \mid \bx\right] \nonumber\\
		& \qquad + (p_{d,1}(\bx) + p_0(\bx)p_{d,0}(\bx))\lambda + \vartheta_1-\vartheta_2+\beta -s_2(\bx), \hspace{5cm} \forall \bx \in \mathcal{X} \nonumber
	\end{align}
\noindent where the functional variables are required to belong to $ \mathcal{G}(\mathcal{X})$. We see this problem has a feasible solution, where $h_1(\bx)=h_2(\bx)=0$ for all $\bx\in \mathcal{X}$, $\lambda=\beta=\vartheta_2=0$ and $\vartheta_1=\max\{\max_{\bx\in \mathcal{X}} y_0(\bx), \max_{\bx\in \mathcal{X}} y_1(\bx), 0\}$, $s_1(\bx)=\vartheta_1-y_0(\bx)$ and $s_2(\bx)=\vartheta_1-y_1(\bx)$ for all $\bx\in \mathcal{X}$. 

	The dual of this optimization problem can be written as:  
	
	\begin{align}
		 \sup_{\xi,\varrho\in \mMplusX}  & \ \mathbb{E}_{\xi}\left[y_0(\bx)\right]+\mathbb{E}_{\varrho}\left[y_1(\bx)\right] \label{Pe} \tag{P$_e$} \\
		\text { s.t. } & \xi(\mA)+\varrho(\mA) \leq \mathbb{E}_\varrho\left[  \left(1-p_{d, 1}(\bx)-p_0(\bx)\right) Q_1(\mA|\bx)\right] +\mathbb{E}_\xi\left[\left(1-p_{d,0}(\bx)\right) Q_0(\mA|\bx)\right] \nonumber \\
		&\quad +\psi(\mA) \left\{\mathbb{E}_\varrho\left[ (p_{d,1}(\bx)+p_0(\bx)p_{d,0}(\bx))\right]+\mathbb{E}_\xi\left[p_{d,0}(\bx)\right]\right\} +\mathbb{E}_\varrho\left[ p_0(\bx)(1-p_{d,0}(\bx)) Q_0(\mA|\bx)\right], \
		\forall \mA\in \mathcal{B}(\mathcal{X}) \nonumber\\
		& \xi(\mA)+\varrho(\mA) \geq \mathbb{E}_\varrho\left[\left(1-p_{d, 1}(\bx)-p_0(\bx)\right) Q_1(\mA|\bx)\right] +\mathbb{E}_\xi\left[\left(1-p_{d,0}(\bx)\right) Q_0(\mA|\bx)\right] \nonumber\\
		&\quad +\psi(\mA)\left\{\mathbb{E}_\varrho\left[p_{d,1}(\bx)+p_0(\bx)p_{d,0}(\bx)\right]+\mathbb{E}_\xi\left[ p_{d,0}(\bx)\right] \right\} +\mathbb{E}_\varrho\left[ p_0(\bx)(1-p_{d,0}(\bx)) Q_0(\mA|\bx)\right] , \
		\forall \mA\in \mathcal{B}(\mathcal{X}) \nonumber\\
        &  \mathbb{E}_{\varrho} [p_{d,1}(\bx)+p_0(\bx)p_{d,0}(\bx)]+\mathbb{E}_{\xi}[p_{d,0}(\bx)] \leq \delta^* \nonumber\\
		& \xi(\mathcal{X})+\varrho(\mathcal{X})\leq 1 \nonumber\\ 
		& -\xi(\mathcal{X})-\varrho(\mathcal{X})\leq -1 \nonumber \\ 
		& \varrho(\mathcal{X}) \leq \frac{m}{n} \nonumber
	\end{align}
	
	The nonnegativity constraints  correspond to the dual constraints for $s_1$ and $s_2$, while the first two constraints correspond to the dual constraints for $h_1$ and $h_2$. The fifth constraint represents the mortality constraint and is the dual constraint for $\lambda$. The sixth and seventh constraints are the dual constraints for $\vartheta_1$ and $\vartheta_2$, which together imply $\xi(\mathcal{X})+\varrho(\mathcal{X})= 1$. The final constraint corresponds to $\beta$. For both the pairing of $(y_0,y_1)$ with $(\xi,\varrho)$, and the variables $(h_1,h_2,s_1,s_2, \lambda,\vartheta_1, \vartheta_2,\beta)$ of the problem \eqref{De} and their dual counterparts, the relevant dual pairing is $(\mathcal{F}(\mathcal{X}), \mathcal{M}(\mathcal{X}))$ as defined earlier.
	 
	We can verify that program \eqref{Pe} is equivalent to program \eqref{P}. By applying Theorem 8 from \cite{anderson1983review}, we can establish that strong duality holds between these \eqref{De} and \eqref{Pe}: 
	From the assumptions and the arguments above, we know that both the primal problem \eqref{P} and the problem \eqref{De} are feasible. The same holds for problems \eqref{Pe} and \eqref{D}. 
	Suppose we take a pair of perturbed $y_0'$ and $y_1'$ in a small neighborhood (in sup norm) around the pair $y_0$ and $y_1$ (i.e., the right-hand sides of the equality constraints of \eqref{De}) with some amounts less than or equal to $\epsilon$, and consider the perturbed dual problem (D$_{\epsilon}$). In this new problem, $|y_0(\bx)-y_0'(\bx)|\leq \epsilon$ and $|y_1(\bx)-y_1'(\bx)|\leq \epsilon$ by the definition of sup norm. (In our setting, the Mackey topology corresponds to the sup norm topology.) We see that this new problem (D$_{\epsilon}$) is still feasible, since one feasible solution can still be constructed by setting $h_1(\bx)=h_2(\bx)=0$ for all $\bx\in \mathcal{X}$, $\lambda=\beta=\vartheta_2=0$ and $\vartheta_1=\max\{\max_{\bx\in \mathcal{X}} y_0'(\bx), \max_{\bx\in \mathcal{X}} y_1'(\bx), 0\}$, $s_1(\bx)=\vartheta_1-y_0'(\bx)$ and $s_2(\bx)=\vartheta_1-y_1'(\bx)$ for all $\bx\in \mathcal{X}$. Furthermore, the optimal value for such (D$_{\epsilon}$) problems is upper bounded by $\max\{\max_{\bx\in \mathcal{X}} y_0'(\bx), \max_{\bx\in \mathcal{X}} y_1'(\bx), 0\}$. By applying Theorem 8 from \cite{anderson1983review}, we establish strong duality holds between \eqref{De} and \eqref{Pe}. Since \eqref{De} and \eqref{Pe} are equivalent to \eqref{D} and \eqref{P}, there is no duality gap between \eqref{D} and \eqref{P}. Therefore, strong duality holds between \eqref{P} and \eqref{D}. \hfill $\square$
\end{proof}

To simplify \eqref{D}, we now perform the change of variables $\zeta_0 = \vartheta$ and $\zeta_1 = \vartheta + \beta$. We also define the penalized reward functions $
y_0^\lambda(\bx) := y_0(\bx) - \lambda p_{d, 0}(\bx)$ and $
y_1^\lambda(\bx) := y_1(\bx) - \lambda\left(p_{d, 1}(\bx) + p_0(\bx)p_{d, 0}(\bx)\right)
$, which penalize the expected rewards of untreated and treated patients, respectively, according to their mortality risk. Under this change of variables, the dual problem \eqref{D} can be rewritten as:

\begin{align} 
	 \inf_{\substack{h(\cdot)\in \mathcal{G}(\mathcal{X})\\\lambda\geq0,\zeta_0,\zeta_1}} & \quad
	\frac{m}{n} \cdot \zeta_1+\left(1-\frac{m}{n}\right) \cdot \zeta_0+\delta^* \cdot \lambda \label{Dzeta} \tag{D$_\zeta$}\\ 
	\text { s. t. }
	& \quad \zeta_0\geq y_0^\lambda(\bx) - {(-h(\bx)+\left(1-p_{d,0}(\bx)\right) \mathbb{E}_{Q_0}\left[h\left(\bx^{\prime}\right) \mid \bx\right]+p_{d,0}(\bx) \cdot \mathbb{E}_\psi\left[h\left(\bx^{\prime}\right)\right])},  \hspace{2cm}  \forall \bx \in \mathcal{X} \nonumber\\
	& \quad \zeta_1\geq y_1^\lambda(\bx)-
(-h(\bx)+\left(1-p_{d,1}(\bx) - p_0(\bx)\right) \mathbb{E}_{Q_1}\left[h\left(\bx^{\prime}\right) \mid \bx\right]+(p_{d, 1}(\bx)+ p_0(\bx)p_{d,0}(\bx)) \mathbb{E}_\psi\left[h\left(\bx^{\prime}\right)\right] \nonumber\\
& \quad \quad + p_0(\bx)(1-p_{d,0}(\bx)) \mathbb{E}_{Q_0}\left[h\left(\bx^{\prime}\right) \mid \bx\right]),
 \hspace{8cm} \forall \bx \in \mathcal{X} \nonumber\\
	& \quad \zeta_1\geq \zeta_0\nonumber
\end{align}

This problem involves three scalar variables and a functional variable $h(\cdot)$. In the next section we show that Problem \eqref{Dzeta} is amenable to Approximate Dynamic Programming (ADP) techniques.

\section{ADP Approach to Steady-State Control \label{sec:ADP}}

\cite{measurized} showed that the bias function in the AR optimality equations corresponds to the dual variable of the equilibrium constraint. In our case, $h(\cdot)$ is the dual variable associated with the {\it relaxed} equilibrium constraint. We isolate the terms contingent on $h(\cdot)$ in \eqref{Dzeta} and define the following expressions:

\begin{align} 
\Delta_{0}(\bx):= &\left(1-p_{d,0}(\bx)\right) \mathbb{E}_{Q_0}\left[h\left(\bx^{\prime}\right) \mid \bx\right]+p_{d,0}(\bx) \cdot \mathbb{E}_\psi\left[h\left(\bx^{\prime}\right)\right]-h(\bx) \label{Delta0} \\
\Delta_{1}(\bx):= & \medmath{\left(1-p_{d,1}(\bx) - p_0(\bx)\right) \mathbb{E}_{Q_1}\left[h\left(\bx^{\prime}\right) \mid \bx\right]+(p_{d,1}(\bx) + p_0(\bx)p_{d,0}(\bx)) \mathbb{E}_\psi\left[h\left(\bx^{\prime}\right)\right] +p_0(\bx)(1-p_{d,0}(\bx)) \mathbb{E}_{Q_0}\left[h\left(\bx^{\prime}\right) \mid \bx\right]-h(\bx)} \label{Delta1}
\end{align}

\noindent Here, $\Delta_0(\bx)$ represents the expected change in the bias function for an untreated patient with covariates $\bx$ after transitioning. Similarly, $\Delta_1(\cdot)$ captures the expected change for a treated patient. Inspired by \cite{price-directed}, we propose the following approximation for the {\it ``bias"} function $h(\cdot)$:
		
\begin{equation} \label{h_approx}
			h(\bx) \approx \sum_{k=1}^K w_k \phi_k(\bx).
\end{equation}
		%
where $w_k$ are weights to be optimized. The basis functions $\phi_1,...,\phi_K:\mX \subset \mathbb{R}^p \rightarrow \mathbb{R}$ are measures of patient's health quality, such as QALYs (Quality Adjusted Life Years), that the decision maker can compute. We then denote the expected change on the $k$-th basis function for a treated and untreated patient as: 
\begin{align*} 
\Delta_{0, k}^t({\bx}):= & \left(1-p_{d,0}({\bx})\right) \mathbb{E}_{Q_{0}}\left[\phi_k\left({\bx}^{\prime}\right) \mid {\bx}\right]+p_{d,0}({\bx}) \mathbb{E}_\psi\left[\phi_k\left({\bx}^{\prime}\right)\right]-\phi_k({\bx}) \\ 
\Delta_{1, k}^t({\bx}):= & \left(1-p_{d,1}({\bx})-p_0({\bx})\right) \mathbb{E}_{Q_{1}}\left[\phi_k\left({\bx}^{\prime}\right) \mid {\bx}\right]+p_0({\bx})(1-p_{d,0}({\bx})) \mathbb{E}_{Q_{0}}\left[\phi_k\left({\bx}^{\prime}\right) \mid {\bx}\right]\\
& \quad \quad + (p_{d,1}({\bx}) + p_0({\bx})p_{d,0}({\bx})) \mathbb{E}_\psi\left[\phi_k\left({\bx}^{\prime}\right)\right]-\phi_k({\bx}).
\end{align*}

\noindent By substituting \eqref{h_approx} into \eqref{Delta0} and \eqref{Delta1}, we get 

\begin{align}
\Delta_0(\bx)\approx \sum_k w_k \Delta_{0, k}^t(\bx), & \quad
\Delta_1(\bx)\approx \sum_k w_k \Delta_{1, k}^t(\bx). \label{Delta1_approx}
\end{align}

\noindent Therefore, approximating the bias function using \eqref{h_approx} amounts to expressing the expected changes in $h(\cdot)$ for treated and untreated patients as a weighted sum of their expected changes in the proxy functions $\phi_1, \ldots, \phi_K$. Plugging \eqref{Delta1_approx} into \eqref{Dzeta} yields the approximate LP

\begin{align} 
	 \inf_{\substack{w_1,\cdots, w_k\\
     \lambda\geq 0,\zeta_0,\zeta_1}}  & \ 
	\frac{m}{n} \cdot \zeta_1+\left(1-\frac{m}{n}\right) \cdot \zeta_0+\delta^* \cdot \lambda  \label{Dapprox} \tag{$\tilde{\mbox{D}}$} \\ 
	\text { s.t. }
	& \zeta_0\geq y_0^\lambda(\bx)-\sum_k w_k \Delta_{0, k}^t(\bx), \quad \forall \bx \in \mathcal{X} \nonumber\\
	& \zeta_1\geq y_1^\lambda(\bx)-\sum_k w_k \Delta_{1, k}^t(\bx), \quad \forall \bx \in \mathcal{X} \nonumber \\
	& \zeta_1\geq \zeta_0 \nonumber 
\end{align}

The optimal value of \eqref{D} is less than or equal to the optimal value of \eqref{Dapprox}; i.e.,  the ADP approximation provides an upper bound for the original problem. This is because we are restricting the function space of $h(\cdot)$ to a linear combination of basis functions $\{\phi_k(\cdot)\}_{k=1}^K$, which is a subset of all possible functions. We define the {\it impactability} function as

\begin{equation} 
\Delta^\lambda(\bx) := y_1^\lambda(\bx) - y_0^\lambda(\bx), \label{Delta} 
\end{equation}

\noindent which captures the short-term treatment effect for a patient with covariates $\bx$. To incorporate long-term effects, we define the {\it long-run adjustment} term as

\begin{align} 
\label{Delta'} 
\sum_{k=1}^K w_k \Delta'_k(\bx) := \sum_{k=1}^K w_k \left( \Delta_{1,k}^t(\bx) - \Delta_{0,k}^t(\bx) \right),
\end{align}
where the terms $\Delta_{1,k}^t(\bx)$ and $\Delta_{0,k}^t(\bx)$ represent the expected long-run changes in the $k$-th health proxy for treated and untreated patients, respectively.

We now consider the impact of the approximation \eqref{h_approx} on the original relaxed equilibrium problem \eqref{P}. With a straightforward transformation, we find that the dual of the approximate problem \eqref{Dapprox} becomes:

\begin{align} 
\sup_{\xi, \varrho\in \mMplusX} & \quad \mathbb{E}_{\xi}\left[y_0(\bx)\right]+\mathbb{E}_{\varrho}\left[y_1(\bx)\right] \label{Eapprox} \tag{$\tilde{\mbox{P}}$}\\ 
 \text { s.t. } & \quad \mathbb{E}_\xi\left[  \Delta_{0, k}^t(\bx)\right] +\mathbb{E}_\varrho\left[ \Delta_{1, k}^t(\bx)\right]=0 \quad \forall k=1,...,K \nonumber\\ 
	& \quad\mathbb{E}_{\varrho}\left[p_{d, 1}(\bx)+p_0(\bx)p_{d,0}(\bx)\right]+\mathbb{E}_{\xi}\left[p_{d, 0}(\bx)\right] \leq \delta^* \nonumber\\
	& \quad\xi(\mathcal{X})+\varrho(\mathcal{X})=1 \nonumber\\ 
	& \quad\varrho(\mathcal{X}) \leq \frac{m}{n}. \nonumber
\end{align}

This problem provides a relaxation of the original equilibrium formulation \eqref{P}. The key distinction lies in how equilibrium is enforced: while \eqref{P} requires exact equality $\mu' = \mu$, the approximate formulation enforces equilibrium only in expectation over the selected basis functions $\phi_1, \ldots, \phi_K$. That is, instead of requiring the full population distribution to remain constant, we only require that the expected values of the health proxies remain unchanged. 


While we have discussed how the approximation affects the structure and interpretation of the optimization problem, in the next section we explore how to solve it in practice. Specifically, to handle the semi-infinite LP \eqref{Dapprox}, where the number of constraints indexed by $\bx \in \mathcal{X}$ is infinite, we employ a row generation algorithm.

	\subsection{Row generation algorithm \label{row_gen}}
		
		To tackle the computation of the optimal weights $\bw^*=(w^*_1,...,w^*_K)$ to problem \eqref{Dapprox} we propose a row generation algorithm. That is to say, instead of solving  \eqref{Dapprox} with constraints for all $\bx \in\mX$, we consider constraints only for a subset  $ \hat{\mX} \subseteq \mX$. To add new rows, we solve the subproblems
		
		\begin{align}
			\max_{\bx \in \mX}  &\  \ y_0^\lambda(\bx) -\sum_k w_k \Delta_{0,k}^t(\bx) -  \zeta^0 \label{subproblem0} \\
			\max_{\bx \in \mX}  &\  \ y_1^\lambda(\bx) -\sum_k w_k \Delta_{1,k}^t(\bx) -  \zeta^1 \label{subproblem1} 
		\end{align}
\noindent for fixed $\bw$, $\lambda$, $\zeta^0$ and $\zeta^1$.
		Let $\bx^0$ and $\bx^1$ denote the solutions to \eqref{subproblem0} and \eqref{subproblem1}, respectively; if the optimal objective value of \eqref{subproblem0} is larger than zero, then $\bx^0$ is infeasible to the current problem, and we do $\hat{\mX} \leftarrow \hat{\mX} \cup \{\bx^0\}$. A similar reasoning applies to $\bx^1$. To estimate the new weights, we solve the master problem, which is \eqref{Dapprox} with the reduced subset of constraints $\hat{\mX}$. We add more constraints sequentially until a stopping criterion $\mathcal{C}_R$ is met. The row generation algorithm is sketched in Table \ref{row_generation}.

\begin{algorithm}[H]
\caption{Row Generation Algorithm}
\label{row_generation}
\begin{algorithmic}[1]
\STATE \textbf{Initialize:} Set \( \bw = 0 \), \( \lambda = 0 \), \( \zeta^0 = 0 \), \( \zeta^1 = 0 \), and \( \hat{\mathcal{X}} \leftarrow \emptyset \).
\WHILE{stopping criterion \( \mathcal{C}_R \) is not met}
    \STATE \textbf{Generate rows:}
    \begin{enumerate}
        \item[(a)] For fixed \( \bw, \lambda, \zeta^0, \zeta^1 \), compute \( \bx^0 \) and \( \bx^1 \) as maximizers of subproblems \eqref{subproblem0} and \eqref{subproblem1}, respectively.
        \item[(b)] If \( y_0^\lambda(\bx^0) - \sum_k w_k \Delta_{0,k}^t(\bx^0) - \zeta^0 > 0 \), update \( \hat{\mathcal{X}} \leftarrow \hat{\mathcal{X}} \cup \{ \bx^0 \} \).
        \item[(c)] If \( y_1^\lambda(\bx^1) - \sum_k w_k \Delta_{1,k}^t(\bx^1) - \zeta^1 > 0 \), update \( \hat{\mathcal{X}} \leftarrow \hat{\mathcal{X}} \cup \{ \bx^1 \} \).
    \end{enumerate}
    \STATE \textbf{Solve the master problem} \eqref{Dapprox} with only the subset constraints \( \hat{\mathcal{X}} \) to update \( \bw, \lambda, \zeta^0, \zeta^1 \).
\ENDWHILE
\end{algorithmic}
\end{algorithm}

Interestingly, the linear program \eqref{Dapprox} involves only $K + 3$ variables. 
In the next section, we show how this optimal approximation can be used to construct the optimal approximated equilibrium policy.

		\section{Structure of Optimal Policies \label{sec:approx_policies}}
		
In this section, we show that the measurized patient selection problems studied in this paper yield interpretable, clinically implementable policies. In particular, Section \ref{myopic} proves that the optimal myopic policy selects the most impactable patients, according to the impactability function \eqref{Delta}, until capacity is reached. Section \ref{equilibrium_policy} shows that this policy generalizes to the approximated equilibrium problem by incorporating the adjustment term \eqref{Delta'} into \eqref{Delta}.
	
		\subsection{Myopic problem \label{myopic}}
		
		In this section, we examine the simplest version of the infinite-horizon selection problem \eqref{K*tau}: the myopic case. The myopic version of the problem ignores the future value function (i.e., $\alpha=0$), resulting in: $\max_{\tau \in \mT(\eta,\rho)}  \ \mathbb{E}_{\rho+\tau}\left[  y_1(\bx) \right] +\mathbb{E}_{\eta-\tau}\left[  y_0(\bx) \right]$  for all $(\eta,\rho) \in \mS_\mM$.
		Since $\eta$ and $\rho$ are given, the integration of $y_1$ and $y_0$ with respect to these measures is constant and we can rewrite the myopic problem as

		\begin{align}
			\max_{\tau \in \mT(\eta,\rho) } & \quad \mathbb{E}_\tau \left[  \Delta^\lambda(\bx)\right]  &\forall (\eta,\rho) \in \mS_\mM.\label{pb:myopic} 
		\end{align}
		The objective of Problem \eqref{pb:myopic} is to maximize the expected treatment effect. The next proposition shows that, if $\mu=\rho+\eta$ is an atomic measure, then the optimal $\tau^*$ to \eqref{pb:myopic} is a threshold policy.

        \color{black}
		\begin{proposition} \label{prop:knapsack}
			Let $\mu=\rho+\tau$ be an atomic measure such that $\exists \bx_1,\bx_2,...$ such that $\mu(\bx_i)>0 \ \forall i=1,2,..$ and $\mu(\bx)=0$ for all $\bx \in X \setminus \{\bx_1,\bx_2,...\}$. Without loss of generality assume that $\bx_1,\bx_2,...$ are ordered in such a way that $\Delta^\lambda(\bx_{i}) \geq \Delta^\lambda(\bx_{i+1})$. Denote by $L\in \mathbb{N}$ the index such that $\Delta^\lambda(\bx_{i})>0$ for all $i\leq L$ and $\Delta^\lambda(\bx_{i})\leq 0$ for all $i > L$. Let $N^*$ be such that $\sum_{i=1}^{N^*} \mu(\bx_i)\leq m/n$ and $\sum_{i=1}^{N^*+1} \mu(\bx_i) > m/n$. Hence, the optimal solution $\tau^*$ to Problem \eqref{pb:myopic} is :
			
			\begin{align}
				\label{xi_opt}
				\tau^*(\bx)=\begin{cases}
					\eta(\bx_i), & \quad \forall \ \bx=\bx_i \mbox{ with } i\leq \min(N^*,L)\\
					m/n-\rho(\bx)-\sum_{i=1}^{N^*} (\rho+\tau^*)(\bx_i), & \quad \mbox{ if } \bx=\bx_{N^*+1} \mbox{ and } N^*\leq L\\
					0, & \quad \mbox{ otherwise}
				\end{cases}
			\end{align}
			
		\end{proposition}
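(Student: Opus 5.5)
The plan is to reduce Problem \eqref{pb:myopic} to a countable fractional knapsack with unit weights and then show, by an exchange argument, that filling atoms greedily in decreasing order of $\Delta^\lambda$ is optimal. I read the hypothesis as saying that $\mu=\rho+\eta$ is atomic; the ``$\mu=\rho+\tau$'' in the statement appears to be a typo.

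\textbf{Step 1 (reduction to atoms).} Since $\rho,\eta\le\mu$, both are supported on $\{\bx_1,\bx_2,\dots\}$. The constraint $\tau(\mA)\le\eta(\mA)$ in $\mT(\eta,\rho)$ makes $\tau$ absolutely continuous with respect to $\eta$, as in the proof of Proposition~\ref{lemma:phi}. Hence $\tau$ is determined by the numbers $t_i:=\tau(\bx_i)$, and Problem \eqref{pb:myopic} becomes
$$\max_{t}\ \sum_i t_i\,\Delta^\lambda(\bx_i)\quad\text{s.t.}\quad 0\le t_i\le \eta(\bx_i),\quad \sum_i t_i\le c:=\tfrac{m}{n}-\rho(\mX).$$
All series converge absolutely, because $\Delta^\lambda$ is bounded by Assumption~\ref{assumption:dualpair} and $\sum_i\eta(\bx_i)\le 1$. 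This is a fractional knapsack in which every item has weight one, value $\Delta^\lambda(\bx_i)$, and availability $\eta(\bx_i)$.

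\textbf{Step 2 (exchange argument).} Take any feasible $t$ and make three improving moves.
\begin{enumerate}
\item[(a)] If $t_j>0$ for some $j>L$, set $t_j=0$. This keeps feasibility and does not decrease the objective, since $\Delta^\lambda(\bx_j)\le 0$.
\item[(b)] If $i<j\le L$ with $t_i<\eta(\bx_i)$ and $t_j>0$, move mass $\varepsilon=\min\{\eta(\bx_i)-t_i,\,t_j\}$ from atom $j$ to atom $i$. The total $\sum_i t_i$ is unchanged, and the objective changes by $\varepsilon\bigl(\Delta^\lambda(\bx_i)-\Delta^\lambda(\bx_j)\bigr)\ge 0$.
\item[(c)] If $\sum_i t_i<c$ and some $i\le L$ has $t_i<\eta(\bx_i)$, increase $t_i$ until either capacity binds or $t_i=\eta(\bx_i)$. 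This strictly improves the objective because $\Delta^\lambda(\bx_i)>0$.
\end{enumerate}
Applying these moves shows that the greedy vector is optimal. In the greedy vector, $t_i=\eta(\bx_i)$ for the first indices up to $\min(N^*,L)$; the atom $N^*+1$ receives the residual capacity whenever $N^*\le L$; every other atom receives zero. This matches \eqref{xi_opt} once $N^*$ is read as the first index at which the cumulative post-action treated mass $\rho(\mX)+\sum_{i\le N^*}\eta(\bx_i)$ would exceed $m/n$. I would state this reading explicitly, because the statement defines $N^*$ through $\mu$.

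\textbf{Step 3 (certificate of optimality).} To avoid infinite sequences of exchanges over countably many atoms, I would certify optimality directly by LP duality rather than by iterating the moves in Step 2. Set the threshold
$$\theta:=\max\bigl\{\Delta^\lambda(\bx_{N^*+1}),\,0\bigr\}$$
if capacity binds, and $\theta:=0$ otherwise. Then check that, for every feasible $t$,
$$\sum_i t_i\Delta^\lambda(\bx_i)\;\le\;\theta c+\sum_i \eta(\bx_i)\bigl(\Delta^\lambda(\bx_i)-\theta\bigr)^+ ,$$
and that the greedy $\tau^*$ attains this bound. This is a one-line termwise inequality, obtained by writing $t_i\Delta^\lambda(\bx_i)=t_i\theta+t_i\bigl(\Delta^\lambda(\bx_i)-\theta\bigr)$ and using $t_i\le\eta(\bx_i)$ and $\sum_i t_i\le c$. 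It also gives the threshold interpretation of the policy directly: patients whose $\Delta^\lambda$ exceeds $\theta$ are fully treated.

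\textbf{Main obstacle.} I expect the main difficulty to be bookkeeping rather than mathematics. The countable ordering must exist; I take it as given in the statement, which is harmless since $\Delta^\lambda$ is bounded and the atom masses are summable. Ties in $\Delta^\lambda$ make $\tau^*$ optimal but not unique. The index $N^*$ must be reconciled with the capacity constraint, which involves $\rho(\mX)+\tau(\mX)$ rather than cumulative $\mu$-mass. I would handle the last point by stating the corrected definition of $N^*$ at the start of the proof.
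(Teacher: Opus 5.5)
Your argument is correct. It starts as the paper's does: both reduce \eqref{pb:myopic} to the countable LP $\max\sum_i t_i\Delta^\lambda(\bx_i)$ subject to $0\le t_i\le\eta(\bx_i)$ and $\sum_i t_i\le m/n-\rho(\mX)$. The routes split at the decisive step.

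\begin{itemize}
\item The paper only identifies this LP as the continuous relaxation of the classical knapsack and asserts that its greedy solution is \eqref{xi_opt}.
\item You prove optimality with the threshold certificate $\theta c+\sum_i\eta(\bx_i)\bigl(\Delta^\lambda(\bx_i)-\theta\bigr)^+$.
\end{itemize}

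Your route costs a few lines but gains three things. It covers countably many atoms, whereas the textbook greedy result is for finitely many items. It exhibits the threshold structure of Proposition~\ref{prop:knapsack_continuous} directly. And it forces the corrections you list: reading $\mu=\rho+\eta$, and defining $N^*$ and the residual $c-\sum_{i\le N^*}\eta(\bx_i)$ through $\rho(\mX)$.

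Those corrections are necessary, not cosmetic. The literal residual in \eqref{xi_opt} subtracts $\rho$ only on $\bx_1,\dots,\bx_{N^*+1}$. The resulting post-action treated mass is therefore $m/n+\sum_{i>N^*+1}\rho(\bx_i)$, which violates capacity whenever $\rho$ charges later atoms. It is also inconsistent with the paper's own reduced constraint $\sum_i\tau(\bx_i)\le m/n-\sum_i\rho(\bx_i)$.

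Two points still need fixing.

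\begin{itemize}
\item \textbf{The case split.} You copy the condition $N^*\le L$, but it must be $N^*<L$. With your $N^*$, suppose $N^*=L$, $\Delta^\lambda(\bx_{L+1})<0$ and the residual is positive. Then giving the residual to $\bx_{L+1}$ strictly lowers the objective. Your certificate has $\theta=\max\{\Delta^\lambda(\bx_{L+1}),0\}=0$ and is not attained by that vector, and your own move (a) would set that atom to zero.
\item \textbf{The ordering.} Your remark that a non-increasing enumeration is harmless because $\Delta^\lambda$ is bounded and the masses are summable is false. Atoms with $\Delta^\lambda(\bx_k)=1-1/k$ admit no such enumeration, since there is no largest value. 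Either keep the ordering as an explicit hypothesis, as the statement does, or drop it. To drop it, choose $\theta>0$ with $\eta(\{\Delta^\lambda>\theta\})\le c\le\eta(\{\Delta^\lambda\ge\theta\})$, or $\theta=0$ if $\eta(\{\Delta^\lambda>0\})\le c$. Such a $\theta$ exists by right-continuity of $\theta\mapsto\eta(\{\Delta^\lambda>\theta\})$ and boundedness of $\Delta^\lambda$. The same certificate then shows it is optimal to fill $\{\Delta^\lambda>\theta\}$ completely and spread the remaining capacity over $\{\Delta^\lambda=\theta\}$.
\end{itemize}
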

		
		\begin{proof}{Proof \\}
			
			First note that $\rho$ and $\eta$ must also be atomic measures, and hence $\tau^*$ should also be because of the first constraint in Problem \eqref{pb:myopic}, as $\tau^*(\bx)\leq \eta(\bx)=0 \ \forall \bx \in \mX\setminus \{\bx_1,\bx_2...\}$. Second, we will prove that the optimal solution to \eqref{pb:myopic} is \eqref{xi_opt}. To ease the notation we will denote $r_i=\Delta^\lambda(\bx_i)$, so \eqref{xi_opt} reads:
			
			\begin{align}
				\max_{\{\tau \in \mMplusX  \}} & \quad  \sum_i r_i \tau(\bx_i) \label{problem_xi_regression2} \\ \nonumber
				\mbox{s.t.} & \quad \left\{ \begin{array}{ll}
					\tau(\bx_i) \leq \eta(\bx_i) & \quad \forall i=1,2,... \\
					\sum_i \tau(\bx_i)\leq m/n - \sum_i \rho(\bx_i)&\\
				\end{array} \right.
			\end{align}
			
			By looking at the objective function it is clear that if $\exists L$ such that $r_{L+1}\leq 0$ and $r_{i}>0 \ \forall i\leq L$, then $\tau^*(\bx_l)=0$ for all $l> L$. Denote $\tau_i=\tau(\bx_i)$, $\rho_i=\rho(\bx_i)$ and $\eta_i=\eta(\bx_i)$ for all $i=1,2,...$, then \eqref{problem_xi_regression2} reads:
			
			\begin{align}
				\max_{\tau_1,...,\tau_2 \in \mathbb{R}} & \quad  \sum_i r_i \tau_i \label{knapsack} \\ \nonumber
				\mbox{s.t.} & \quad \left\{ \begin{array}{ll}
					0 \leq \tau_i \leq 1 & \quad \forall i=1,2,... \\
					\tau_i  \leq \eta_i &  \quad \forall i=1,2,... \\
					\sum_i \tau_i \leq m/n -\sum_i \rho_i&\\
				\end{array} \right.
			\end{align}
			
			\noindent which is a continuous relaxation of the classic knapsack problem, whose optimal solution is \eqref{xi_opt}. \hfill $\square$
			
		\end{proof}

		The next proposition extends the previous result to any type of measure $\mu$.
		
		\begin{proposition} \label{prop:knapsack_continuous}
			Let $\tau^*$ be the optimal solution to Problem \eqref{pb:myopic} and let $\Delta^\lambda$ be a measurable function. Define $\mathcal{X}_{d}=\{\bx: \ \Delta^\lambda(\bx) \geq d\}$. Then there exists a $d^*\in \mathbb{R}$ such that $\tau^*(\mA)=\eta(\mA)$ for all $\mA \in \mB(\mathcal{X}_{d^*})$, and $\tau^*(\mA)<\eta(\mA)$ for all $\mA \in \mB(\mX \setminus \mathcal{X}_{d^*})$.
		\end{proposition}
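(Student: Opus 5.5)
The plan is to reduce Problem \eqref{pb:myopic} to a ``bathtub'' (continuous fractional knapsack) problem over a density and then run an exchange argument. By the second constraint in $\mT(\eta,\rho)$ and the argument of Proposition \ref{lemma:phi}, every feasible $\tau$ satisfies $\tau \ll \eta$ and has density $g=d\tau/d\eta$ with values in $[0,1]$ $\eta$-a.e. Set $c:=m/n-\rho(\mX)\ge 0$. Problem \eqref{pb:myopic} then becomes
$$\max_{g:\mX\to[0,1]\ \text{measurable}} \ \int_{\mX}\Delta^\lambda g\, d\eta \quad \text{s.t.} \quad \int_{\mX} g\, d\eta \le c .$$
This is the continuous analogue of \eqref{knapsack} in Proposition \ref{prop:knapsack}. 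The claim to establish is that an optimal $g$ equals $1$ $\eta$-a.e.\ on an upper level set $\mathcal{X}_{d^*}$ and equals $0$ (or at least is strictly less than $1$) below it.

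Next I would construct the threshold. Let $F(d):=\eta(\mathcal{X}_d)$. This function is nonincreasing and left-continuous in $d$, because $\mathcal{X}_d=\bigcap_{d'<d}\mathcal{X}_{d'}$ and $\eta$ is finite. I would define
$$d^*:=\inf\{d\ge 0:\ F(d)\le c\}.$$
Restricting to $d\ge 0$ mirrors the index $L$ in Proposition \ref{prop:knapsack}: patients with nonpositive impactability are never treated. Right-continuity from above of the strict sets $\{\Delta^\lambda>d\}$ gives $\eta(\{\Delta^\lambda>d^*\})\le c$. Two cases follow. If $F(d^*)\le c$, the candidate is $g=\mathbbm{1}_{\mathcal{X}_{d^*}}$. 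Otherwise the level set $\{\Delta^\lambda=d^*\}$ carries an $\eta$-atom of mass exceeding the residual capacity, and the candidate fills $\{\Delta^\lambda>d^*\}$ completely and the tie set only fractionally. This is the analogue of the fractional item $\bx_{N^*+1}$ in \eqref{xi_opt}.

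The key step is to show that any optimal $\tau^*$ has this form, by exchange. Suppose $g^*<1$ on a set $B\subseteq \mathcal{X}_{d^*}$ of positive $\eta$-measure. If capacity is slack, raising $g^*$ on $B$ strictly increases the objective, since $\Delta^\lambda\ge d^*\ge0$ there, and strictly so when $d^*>0$. If capacity is tight, then $g^*>0$ on some set $C\subseteq\{\Delta^\lambda<d^*\}$ of positive $\eta$-measure, and moving an equal $\eta$-mass of density from $C$ to $B$ strictly increases $\int\Delta^\lambda g\,d\eta$. For this transfer I would use sets of the form $B\cap\mathcal{X}_{d^*+\epsilon}$ and $C\setminus\mathcal{X}_{d^*-\epsilon}$ with $\epsilon$ small, so that a uniform gap $2\epsilon$ in $\Delta^\lambda$ is available. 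Symmetrically, on $\mX\setminus\mathcal{X}_{d^*}$ the same exchange forces $g^*=0$ a.e.\ whenever capacity binds on $\mathcal{X}_{d^*}$. Hence $\tau^*(\mA)=0<\eta(\mA)$ for every $\mA\subseteq \mX\setminus\mathcal{X}_{d^*}$ with $\eta(\mA)>0$.

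The main obstacle is making the statement literally correct at the boundary, because of ties and null sets. First, the strict inequality $\tau^*(\mA)<\eta(\mA)$ can only hold for sets with $\eta(\mA)>0$, so I would state and prove it in that form. Second, when $\{\Delta^\lambda=d^*\}$ is an atom that only partially fits, $\tau^*=\eta$ holds only on the strict set $\{\Delta^\lambda>d^*\}$. I would handle this by redefining the threshold set as $\{\Delta^\lambda> d^*\}$ in that case, or equivalently by choosing $d^*$ slightly above the tie value, and noting that the tie set receives partial mass exactly as in Proposition \ref{prop:knapsack}. Finally, I would argue separately that an optimizer exists, for example by weak-$*$ compactness of $\{g\in L^\infty(\eta):0\le g\le 1\}$ together with linearity of the objective. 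Alternatively, existence follows directly once the explicit threshold candidate is shown to dominate every feasible $g$ through the same exchange inequality $\int\Delta^\lambda(g_{d^*}-g)\,d\eta\ge d^*\int(g_{d^*}-g)\,d\eta\ge0$.
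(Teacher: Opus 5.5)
Your argument is sound for the non-degenerate case, but it takes a different route from the paper.

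\textbf{How the paper argues.} The paper never passes to a density and never perturbs an optimizer. It takes $d^*$ to be the smallest nonnegative level at which $\eta(\mathcal{X}_d)$ fits the residual capacity. Using only $\tau\le\eta$ and the positive part of $\Delta^\lambda-d^*$, it shows $\int_{\mathcal{X}}(\Delta^\lambda-d^*)\,d\tau\le\int_{\mathcal{X}_{d^*}}(\Delta^\lambda-d^*)\,d\eta$ for every feasible $\tau$. This is a weak-duality certificate, with $d^*$ acting as the capacity multiplier, that filling $\mathcal{X}_{d^*}$ is optimal. The paper then asserts, from the minimality of $d^*$ alone, that $\tau^*=\eta$ on $\mathcal{X}_{d^*}$ and $\tau^*<\eta$ outside.

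\textbf{How your route differs.} The paper's certificate is exactly your closing inequality $\int\Delta^\lambda(g_{d^*}-g)\,d\eta\ge d^*\int(g_{d^*}-g)\,d\eta\ge 0$. It gives optimality of the threshold candidate in two lines. It says nothing about an arbitrary optimizer $\tau^*$, which is what the proposition claims, and your exchange argument supplies exactly that. You can get this more cheaply from the equality case. If $g^*$ is optimal, the nonnegative integrand $(\Delta^\lambda-d^*)(g_{d^*}-g^*)$ must integrate to zero, so $g^*=g_{d^*}$ $\eta$-a.e.\ off $\{\Delta^\lambda=d^*\}$. This removes the need for $\epsilon$-gaps. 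Note that your set $B\cap\mathcal{X}_{d^*+\epsilon}$ is empty when the deficiency of $g^*$ lies on the tie set; a gap on the $C$ side alone suffices.

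Your $d^*=\inf\{\cdot\}$ with left-continuity and your residual $c=m/n-\rho(\mathcal{X})$ are also the right choices. The paper's $\min$ need not be attained, and $\rho(\mathcal{X}_d)$ is not what appears in the capacity constraint.

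\textbf{Where the proposal goes wrong: ties.} Your repair for ties does not rescue the statement, and nothing can, because the proposition is false there. Take $\mathcal{X}=[0,1]$, $\Delta^\lambda(x)=x$, $\rho=0$, $m/n=0.6$, and $\eta$ equal to $\tfrac12\delta_{1/2}$ plus Lebesgue measure on $(1/2,1]$. The unique optimizer fills $(1/2,1]$ and puts mass $0.1$ on the atom.
\begin{itemize}
\item Any $d\le 1/2$ fails at the atom, since $\tau^*(\{1/2\})=0.1<\eta(\{1/2\})$.
\item Any $d>1/2$ places the set $\{1/2<\Delta^\lambda<d\}$ inside $\mathcal{X}\setminus\mathcal{X}_d$. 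That set has positive $\eta$-measure and $\tau^*=\eta$ on it, so the strict inequality fails.
\end{itemize}

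So ``choosing $d^*$ slightly above the tie value'' is not equivalent to switching to $\{\Delta^\lambda>d^*\}$, and that switch itself changes the statement. The switch also fails for optimizers that completely fill part of a tie set that is not a single atom. The same non-uniqueness occurs at $d^*=0$ when $\eta(\{\Delta^\lambda=0\})>0$, which is the case your remark ``strictly so when $d^*>0$'' leaves open.

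\textbf{What your argument actually proves.} It proves that $\tau^*=\eta$ on $\{\Delta^\lambda>d^*\}$ and $\tau^*=0$ on $\{\Delta^\lambda<d^*\}$. The tie set $\{\Delta^\lambda=d^*\}$ absorbs whatever capacity remains, in any amount when $d^*=0$. The proposition as written follows, with $\eta(\mathcal{A})>0$ required in the strict part, in particular when $\eta(\{\Delta^\lambda=d^*\})=0$, which is the situation Corollary 1 targets. State the result in that form rather than claiming a patch.
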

		
		\begin{proof}\\
			{\it Proof.}
			Let $d^*=\min \{ d \in \mathbb{R}^+: \ \eta(\mX_{d})\leq m/n-\rho(\mX_{d}) \}$ and denote $\Delta^\lambda_{d^*}(\bx)=\Delta^\lambda(\bx)-d^*$. Let $\tau$ be a solution to Problem \eqref{pb:myopic}. By definition of Lebesgue integral we have
			
			\begin{equation}
				\int_\mX \Delta^\lambda_{d^*}(\bx) d\tau=\int_\mX \Delta_{d^*}^+(\bx) d\tau - \int_\mX \Delta_{d^*}^-(\bx) d\tau \leq  \int_\mX \Delta_{d^*}^+(\bx)  d\tau =\int_{\mX_{d^*}} \Delta^\lambda_{d^*}(\bx) d\tau \leq \int_{\mX_{d^*}} \Delta^\lambda_{d^*}(\bx) d\eta
				\label{ineq_theta}
			\end{equation}
			
			\noindent where $\Delta_{d^*}^+$ and $\Delta_{d^*}^-$ denote the positive and negative parts of $\Delta^\lambda_{d^*}$ and the last inequality comes from Proposition 1. Add $d^* \cdot \tau(\mX_{d^*})$ to each side of the first and last term of \eqref{ineq_theta} to obtain 
			
			\begin{align*}
				\int_{\mX_{d^*}} \Delta^\lambda(\bx) d\tau&=\int_{\mX_{d^*}} \Delta^\lambda_{d^*}(\bx) d\tau+d^* \cdot\tau(\mX_{d^*})  \leq \int_{\mX_{d^*}} \Delta^\lambda_{d^*}(\bx) d\eta +d^* \cdot\tau(\mX_{d^*})\\
				& \leq \int_{\mX_{d^*}} \Delta^\lambda_{d^*}(\bx) d\eta +d^*\cdot \eta(\mX_{d^*})=\int_{\mX_{d^*}} \Delta^\lambda(\bx) d\eta
			\end{align*}
			
			\noindent where the last inequality comes from the first constraint \eqref{pb:myopic}. Because by definition $\not\exists d'<d^*$ such that $\eta(X_{d'})\leq m/n -\rho(X_{d'})$ we have that  $\tau^*=\eta$ in $\mX_{d^*}$ and  $\tau^*<\eta$ outside $\mX_{d^*}$ . \hfill $\square$
			
		\end{proof}

		The previous results establish that all patients whose covariates yield $\Delta^\lambda(\bx)>d^*$ are selected to be treated. However, it is unclear what happens to patients having covariates $\bx$ such that $\Delta^\lambda(\bx)=d^*$. According to the previous result, a portion of them should be selected into treatment until the capacity is reached.  The next corollary gives conditions on measure $\eta$ so that patients yielding the same incremental value are either left out of or selected into treatment altogether.
		
		\begin{corollary}\label{corollary:1-0}
			Assume $\Delta^\lambda$ be a nonnegative measurable function. If $\eta(\{\bx \in \mathcal{X}: \ \Delta^\lambda(\bx)=d\})=0$ for all $d\leq d^*$, then  $\eta(\mathcal{X}_{d^*})=m/n$.
		\end{corollary}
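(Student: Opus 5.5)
The plan is a continuity argument for the tail function $F(d):=\eta(\mathcal{X}_d)$ at $d^*$, combined with the definition of $d^*$ from the proof of Proposition \ref{prop:knapsack_continuous}. Recall that $d^*=\min\{d\ge 0:\ \eta(\mathcal{X}_d)\le m/n-\rho(\mathcal{X}_d)\}$. Throughout I take the statement in the regime where capacity binds and the treated mass $\rho$ vanishes on the relevant level sets. Concretely, $\rho(\mathcal{X}_{d})=0$ for $d$ near $d^*$, so the defining condition reads $\eta(\mathcal{X}_d)\le m/n$, and $d^*>0$, so some $d'<d^*$ with $d'\ge 0$ is available. Both conditions are implicit in the claimed conclusion, which cannot hold if $\eta(\mathcal{X}_0)<m/n$.

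First, $F$ is non-increasing, because $\mathcal{X}_d$ shrinks as $d$ grows. It is also right-continuous: $\mathcal{X}_d=\bigcap_{d'\downarrow d}\mathcal{X}_{d'}$ is not true in general, but $\{\Delta^\lambda\ge d\}=\bigcap_{j}\{\Delta^\lambda\ge d-1/j\}$ is. So $F$ is left-continuous by continuity from above of the finite measure $\eta$. Next, take left limits from the other side: $\bigcup_j\{\Delta^\lambda\ge d+1/j\}=\{\Delta^\lambda>d\}$, so
\begin{equation*}
\lim_{d'\downarrow d}F(d')=\eta(\{\Delta^\lambda>d\})=F(d)-\eta(\{\Delta^\lambda=d\}).
\end{equation*}
The hypothesis $\eta(\{\Delta^\lambda=d\})=0$ for all $d\le d^*$ therefore makes $F$ continuous on $(-\infty,d^*]$, and in particular at $d^*$ from both sides.

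Second, use the minimality of $d^*$. By definition $F(d^*)\le m/n$. For every $d'<d^*$ with $d'\ge0$, minimality gives $F(d')>m/n$. Letting $d'\uparrow d^*$ and using continuity at $d^*$ gives $F(d^*)=\lim_{d'\uparrow d^*}F(d')\ge m/n$. Combining the two inequalities yields $\eta(\mathcal{X}_{d^*})=m/n$. This argument also shows that the minimum defining $d^*$ is attained: the set $\{d: F(d)\le m/n\}$ is closed on the left because $F$ is left-continuous.

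The main obstacle is justifying that $d^*$ is a genuine interior threshold rather than the boundary value $0$. If $d^*=0$ because $\eta(\mathcal{X}_0)<m/n$, the conclusion fails. I would handle this by stating explicitly that the corollary concerns the binding case, meaning $\eta(\{\Delta^\lambda\ge 0\})\ge m/n$ with $\rho$ absorbed into the capacity. Nonnegativity of $\Delta^\lambda$ is what places the whole support in $\mathcal{X}_0$. In the binding case, if $F(0)=m/n$ the claim holds trivially at $d^*=0$, and otherwise $d^*>0$ and the limiting argument above applies.
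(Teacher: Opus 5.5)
Your argument is correct and is essentially the paper's. Both combine minimality of $d^*$ (so $\eta(\mathcal{X}_d)>m/n$ for $d<d^*$) with $\eta(\mathcal{X}_{d^*-\epsilon})-\eta(\mathcal{X}_{d^*})\to 0$; the paper gets this limit by contradiction via dominated convergence, you get it directly via continuity from above, and you rightly make explicit the assumptions the paper's proof uses silently ($\rho$ plays no role near the threshold, and capacity binds). One correction to your aside: attainment of the minimum defining $d^*$ follows from right-continuity of $F$ at the infimum, which is what the hypothesis $\eta(\{\Delta^\lambda=d\})=0$ supplies, not from left-continuity, which holds for every measurable $\Delta^\lambda$ and only yields $F(d^*)\ge m/n$.
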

		
		\begin{proof} {\it Proof.}
			
			Assume that $\eta(\mX_{d^*})=m/n-\delta$, with $\delta>0$; since $d^*$ is optimal, this means that $\eta(\mX_{d})>m/n$ for all $d<d^*$ and hence $\eta(\{\bx \in \mX: \ d^*\geq \Delta^\lambda(\bx) \geq d^*-\epsilon \})= \eta(X_{d^*-\epsilon})-\eta(\mX_{d^*})>\delta \ \forall \epsilon>0.$
			Let us construct a sequence $\{\epsilon_n\} \rightarrow 0$ so that $\lim_{n \rightarrow \infty}  \eta(X_{d^*-\epsilon_n})-\eta(\mX_{d^*})=\lim_{n \rightarrow \infty}  \int \mathcal{X}_{\{\bx \in \mX:  d^*\geq \Delta^\lambda(\bx) \geq d^*-\epsilon_n\}} d\eta > \delta >0.$
			But on the other hand, because of the dominated convergence theorem we have that
			$ \lim_{n \rightarrow \infty}  \eta(X_{d^*-\epsilon_n})-\eta(\mX_{d^*}) =\int \lim_{n \rightarrow \infty}   \mathcal{X}_{\{\bx \in \mX:  d^*\geq \Delta^\lambda(\bx) \geq d^*-\epsilon_n\}} d\eta= 0,$ which is a contradiction. \hfill $\square$
		\end{proof}
		
		In other words, this corollary shows that if $\tau^*=0$ outside the selection set $\mathcal{X}_{d^*}$, the impactability function $\Delta^\lambda$ cannot be constant almost everywhere. If $\Delta^\lambda$ is differentiable, this can be characterized by stating that the set $\partial_d(\mathcal{X})=\{\bx \in \mathcal{X}: \ \nabla \Delta^\lambda(\bx)=0\}$ has measure zero. Moreover, if $\eta$ is absolutely continuous with respect to the Lebesgue measure $\sigma$, and $\sigma(\{\bx \in \mathcal{X}: \ \Delta^\lambda(\bx)=d\})=0$ for all $d\leq d^*$, then $\eta(\mathcal{X}_{d^*})=m/n$. 
        Appendix D provides an illustration of the myopic optimal policy under the assumption that the impactability function is linear.

		\subsection{Approximated Equilibrium Policy \label{equilibrium_policy}}

In this section, we leverage the structure of the myopic policy to construct an optimal approximated policy for problem \eqref{equilibrium+mortality}. As shown in \cite{measurized}, the AR optimality equations can be interpreted as the Lagrangian of the equilibrium problem \eqref{equilibrium}, with $h(\cdot)$ being the dual variable associated with the equilibrium constraint. Following this idea, we now compute the Lagrangian of Problem \eqref{equilibrium+mortality}:

        \begin{equation}\label{L}
            L((\eta,\rho),\tau;\lambda,h)=\mathbb{E}_{\rho+\tau}\left[  y_1^\lambda(\bx)\right]       + \mathbb{E}_{\eta-\tau}\left[  y_0^\lambda(\bx)\right] + \mathbb{E}_{\eta'+\rho'}\left[h(\bx)\right]- \mathbb{E}_{\eta+\rho}\left[h(\bx)\right]
        \end{equation}

\noindent Let $((\eta^*, \rho^*), \tau^*; \lambda^*, h^*)$ denote the optimal solution to the Lagrangian dual\newline $
\inf_{\lambda \geq 0,\ h \in \mathcal{G}(\mathcal{X})} \ \sup_{(\eta, \rho) \in \mathcal{S}_{\mathcal{M}},\ \tau \in \mathcal{T}(\eta, \rho)} L((\eta, \rho), \tau;\lambda, h).
$
Under mild regularity conditions (see Proposition \ref{prop:strong_duality}), strong duality holds, and $((\eta^*, \rho^*), \tau^*)$ is the optimal equilibrium to \eqref{equilibrium+mortality}. Interestingly, while the equilibrium problem \eqref{equilibrium+mortality} determines the optimal steady-state distribution for the healthcare population, it does not specify how to reach it from an arbitrary initial state. However, since $h^*$ is the dual variable associated with the equilibrium constraint, it captures the marginal value of deviating from $((\eta^*, \rho^*), \tau^*)$. By incorporating $h^*$ into the objective function via the Lagrangian \eqref{L}, we can compute the optimal action $\tau$ for any given population state $(\eta, \rho)$, thereby steering the population toward the optimal steady-state. This is similar to the policy update step in the Average Reward Policy Iteration Algorithm \citep{PIA}. However, in our case, we do not update the bias function, as we assume access to a surrogate for the optimal $h^*$, computed as described in Section \ref{sec:ADP}. Specifically, for any current state $(\eta, \rho) \in \mathcal{S}_{\mathcal{M}}$, we have the following inequality:

$$
L((\eta, \rho), \tau; \lambda^*,h^*) \leq \max_{\tau \in \mathcal{T}(\eta, \rho)} L((\eta, \rho), \tau; \lambda^*,h^*) \leq L((\eta^*, \rho^*), \tau^*; \lambda^*,h^*), \quad \forall \tau \in \mathcal{T}(\eta, \rho),
$$
\noindent which implies that optimizing on $\tau$ brings us closer to the optimal equilibrium value.

\noindent 

In practice, computing the functional dual variable $h^*$ exactly can be challenging. Fortunately, in Section \ref{sec:ADP}, we proposed an approximation $\tilde{h}^*(\cdot)=\sum_k w_k^* \phi_k(\cdot)^*$ of the bias function $h^*$. Plugging $\tilde{h}^*$ into \eqref{L}, and fixing a mortality penalty $\lambda \geq 0$, leads to the approximated Lagrangian function:

\begin{align}\label{Lapprox}
\tilde{L}^\lambda((\eta,\rho),\tau;\tilde{h}^*)=\mathbb{E}_\eta \left[ y_0^\lambda(\bx)-\sum_{k=1}^K w_k^*\Delta_{0,k}^t(\bx) \right]  + \mathbb{E}_\rho \left[ y_1^\lambda(\bx)-\sum_{k=1}^K w_k^*\Delta_{1,k}^t(\bx) \right]   +\mathbb{E}_\tau \left[ \Delta^\lambda(\bx)-\sum_{k=1}^K w_k^*\Delta'_{k}(\bx) \right].
\end{align}

Given any current population state $(\eta,\rho)$, we can optimize treatment selection by solving $\max_{\tau \in \mT(\eta,\rho)} \tilde{L}^\lambda((\eta,\rho),\tau;\tilde{h}^*)$, which simplifies to:

\begin{align} \label{myopic_equilibrium}
			\max_{\tau\in \mT(\eta,\rho)}& \  \mathbb{E}_\tau \left[ \Delta^\lambda(\bx)-\sum_{k=1}^K w^*_k\Delta'_{k}(\bx) \right]
\end{align}
Problem \eqref{myopic_equilibrium} shares the structure of the myopic selection problem \eqref{pb:myopic}, but with a key distinction: its objective is the {\it adjusted impactability}, which captures not only the immediate treatment effect but also its long-run implications. As a result, there exists an optimal threshold $d^*$ such that the optimal policy $\tau^*$ is simple and clinically implementable: if a patient with covariates $\bx$ presents, we offer treatment if and only if $\Delta^\lambda(\bx) - \sum_{k=1}^K w^*_k \Delta'_k(\bx) \geq d^*$.

\color{black}


		\color{black}

\section{Numerical Experiments \label{sec:experiments}}

In this section, we conduct numerical experiments to evaluate the performance of our proposed algorithm, using the CMPs introduced earlier as our testbed. The experiments rely on patient-level data from the 2018 CMS dataset, which records whether and when a patient enrolled in a CMP (i.e., if a patient was treated and when did this treatment start), along with a large set of patient characteristics that make it well suited for our analysis. Further details on the dataset are provided in Appendix E. We stress that the purpose of these experiments is to test algorithmic performance, not to assess the potential policy implications or social welfare effects of implementing our approach in a real-world clinical setting.

\subsection{Simulation Environment}


To evaluate policy performance, 
we build a finite-horizon simulation that models the health evolution of a population of 1,000 patients, with each period representing 90 days. 
We consider a CMP with a capacity of 100 patients (i.e., 10\% of the population).
To study how the benefit of look-ahead grows over time, we vary the simulation horizon across $T \in \{10, 20, 30, 40, 50\}$ periods, corresponding to time spans ranging from approximately 2.5 to 12.5 years.
At each period, some patients exit the population and are replaced by new arrivals; see Steps (V) and (VII) in Section \ref{Healthcare_process}. As a result, the population composition evolves not only due to changes in covariates (Step (VI) of population dynamics), but also because individuals may be replaced over time. We refer to the set of patients present at the beginning of the time horizon as the {\it initial cohort}. 

To create a meaningful capacity-constrained selection problem, we construct the initial cohort by sampling half of the patients from individuals who were enrolled in a CMP in the 2018 CMS data and the other half from those who were not. Given the strong imbalance in the data, where most patients were not enrolled in a CMP, this approach increases the presence of patients likely to benefit from treatment. 
In our setting, this means that 50\% of the cohort consists of patients who are more likely to be high-impact candidates, while treatment capacity is limited to 10\% of the population. This ensures that the number of potentially ``impactable" patients exceeds capacity and allows us to assess how well the algorithms prioritize among relevant candidates.

The simulator is parameterized using data-driven models trained on the 2018 CMS data. Key components include the reward functions $y_0$, $y_1$; transition kernels $Q_0$, $Q_1$; outflow probabilities $p_{d,0}$, $p_{d,1}$; program dropout probability $p_0$; inflow distribution $\psi$; and basis functions $\phi_k$.

\paragraph{\bf Reward Functions:}
The reward is defined as the number of “home days” over each period, computed as 90 minus the number of inpatient days, since each simulation period represents 90 days. When a patient dies, their reward contribution is set to zero for the current and all future periods. 
Reward functions $y_0(\mathbf{x})$ and $y_1(\mathbf{x})$ are modeled as linear functions of the patient’s state $\mathbf{x}$, with coefficients learned via separate lasso regressions for treated and untreated groups. This approach allows us to measure cumulative health outcomes attributable to the initial cohort under different allocation strategies.

\paragraph{\bf State Transition Probabilities:}
The state vector \( \mathbf{x} \) includes binary indicators for 29 comorbidities, each modeled conditionally independently (see Appendix E for details on the exact comorbidities we considered). The transition probabilities \( Q_0(\mathbf{x}'|\mathbf{x}) \) and \( Q_1(\mathbf{x}'|\mathbf{x}) \) factor as \( Q_b(\mathbf{x}'|\mathbf{x}) = \prod_i Q_{i,b}(x_i' \mid \mathbf{x}) \), where $b=0,1$ and \( x_i' \) is the next-period state of comorbidity \( i \). For each comorbidity, we train separate logistic regression models for treated (\( Q_{i,1} \)) and untreated (\( Q_{i,0} \)) patients to predict next-period outcomes, using the full current state \( \mathbf{x} \) as input.

\paragraph{\bf Dropout Probability:} The probability of dropping out of the care management program, $p_0(\mathbf{x})$, is assumed to be constant for all patients, and we set this value to $p_0 = 0.1$.

\paragraph{\bf Outflow Probabilities:}
The probability of mortality, $p_{d,0}(\mathbf{x})$ and $p_{d,1}(\mathbf{x})$, is dependent on the patient's state. Due to the imbalanced nature of mortality data, we first train a logistic regression model for the untreated population, $p_{d,0}(\mathbf{x})$, using the patient covariates. We then apply a post-processing step to this model to ensure the predicted probabilities are correctly scaled. Due to limited mortality data for the treated group, we do not train a separate model for $p_{d,1}(\mathbf{x})$. Instead, we derive $p_{d,1}(\mathbf{x})$ by appropriately scaling the model for $p_{d,0}(\mathbf{x})$ to reflect the effect of the intervention.

\paragraph{\bf Inflow Distribution:}
  The population size remains constant throughout the entire simulation by replacing any deceased patient with a new individual drawn from \( \psi \). This inflow distribution $\psi$ is modelled using the empirical distribution of all patients who were alive at the end of 2018 in the CMS dataset. All expectations with respect to $\psi$ are calculated by averaging over this population.

\paragraph{\bf Basis Functions:}
For the approximate dynamic programming approach, we select 10 healthcare utilization measures as basis functions $\phi_k(\mathbf{x})$. These include metrics calculated over the 90 or 180 days before and after the current period. Pre-period metrics are directly obtained from the dataset, while post-period metrics are learned using a lasso regression model based on the covariates $\mathbf{x}$. More details about this can be found on Appendix E.

\subsection{Policy Derivation and Evaluation}

To implement and evaluate our policy, we follow two stages: (1) we solve the optimization problem \eqref{Dapprox} to estimate the weights \( \mathbf{w} = (w_1, \ldots, w_K) \) in the bias approximation \eqref{h_approx}, and (2) we apply the Lagrangian-based policy from Section \ref{sec:approx_policies}, which selects patients for treatment if their adjusted impactability exceeds a threshold. At each period of the simulation, we re-solve both the auxiliary problem $(\mbox{D}_{\zeta}^0)$ to update the mortality threshold $\delta^*$ and the optimization problem \eqref{Dapprox} to update the weights $\mathbf{w}$, using the current patient population and available capacity. This ensures that both the mortality threshold and the bias approximation adapt to the evolving state of the population.

\paragraph{\bf Computing the Mortality Threshold:} In Step (1), we iteratively solve a master program based on \eqref{Dzeta}, which requires a predefined mortality threshold \( \delta^* \). We set this threshold to the minimum mortality rate, which corresponds to the maximum achievable survival rate across all equilibria. To compute it, we solve an auxiliary semi-infinite program that we denote \( (\mbox{D}_{\zeta}^0) \), structurally identical to \eqref{Dzeta} but with survival probabilities as the objective and no mortality constraint. Specifically, we define the rewards as \( y_0(\bx) = 1 - p_{d,0}(\bx) \) and \( y_1(\bx) = 1 - p_{d,1}(\bx) - p_0(\bx) p_{d,0}(\bx) \), where the latter accounts for death while in the program and post-dropout mortality. 
We solve $(\mbox{D}_{\zeta}^0)$ and denote its optimal objective value by $v(\mbox{D}_{\zeta}^0)$, which represents the optimal survival value. The corresponding minimum mortality threshold is then given by $\delta^* = 1 - v(\mbox{D}_{\zeta}^0)$.
\paragraph{\bf Policy Simulation:}
To evaluate the effectiveness of our policy (Section \ref{equilibrium_policy}), we compare it against the myopic policy (Section \ref{myopic}). The latter serves as a greedy benchmark, selecting patients solely based on the impactability function \eqref{Delta}, while our proposed policy incorporates the long-run adjustment term \eqref{Delta'}. We conduct a simulation experiment designed for a paired comparison of both policies across multiple time horizons. The experiment consists of 500 independent replications for each time horizon $T \in \{10, 20, 30, 40, 50\}$, each following the same structure. First, an initial cohort of 1,000 patients is sampled from the 2018 CMS dataset, stratified to include 500 patients previously enrolled in a care management program and 500 who were not. The treatment capacity is fixed at 100 patients per period. Second, each cohort is used to run two parallel simulations (one for the ADP policy and one for the myopic policy) using the same random seed. Third, each simulation spans $T$ periods, with each period representing 90 days. In each period of the simulation, both the ADP and myopic policies rank the patients and select the top candidates for the program, up to the available capacity.

\paragraph{\bf Performance Metrics:} Our primary performance metric is the \textit{average number of home days per patient per 90-day period following treatment or no treatment}. We evaluate policies on the same initial cohort only: for each simulation run, we compute this by summing the total home days for the initial cohort across all $T$ periods, then dividing by the number of patients and time periods. When a patient from the initial cohort dies, their contribution is set to zero for all subsequent periods.
Across 500 paired replications using the same cohort and random seed, we obtain one value for the ADP policy and one for the myopic policy. We compare these paired outcomes with a \textit{paired t-test} and report the p-value.
Note that although we refer to the benchmark as a \textit{myopic} policy, it incorporates a limited form of look-ahead through the reward functions $y_0(\mathbf{x})$ and $y_1(\mathbf{x})$, which capture expected home days over the subsequent 90-day period. Thus, the benchmark is forward-looking at the single-period level. The gains of our policy arise from the additional long-run adjustment term \eqref{Delta'}, which captures the impact of decisions beyond this immediate horizon.

\subsection{Results}


Table \ref{tab:results} reports the performance metrics described above (the average number of home days {\it per patient per period} for both the ADP and myopic policies, along with the corresponding $t$-statistics and $p$-values from the paired t-test) across the five time horizons. It also reports the difference between the two policies and the associated annual gain per 1,000 patients, computed by scaling the per-patient-per-period difference by $1{,}000$ and $365/90$.

\begin{table}[h!]
\centering
\caption{Comparison of the ADP and myopic policies across different time horizons ($T$, in 90-day periods). Each row reports the average number of home days per patient per period, the difference (ADP $-$ Myopic), the annualized improvement per 1,000 patients, and the paired $t$-test statistic and $p$-value over 500 replications.\label{tab:results}}
\begin{tabular}{ccccccc}
\hline
$T$ & ADP & Myopic & Difference & \makecell{Annual gain per \\ 1,000 patients} & $t$-statistic & $p$-value \\
\hline
10 & 73.560 & 73.467 & 0.093 & 377 & 3.61 & $3.3 \times 10^{-4}$ \\
20 & 64.902 & 64.680 & 0.222 & 900 & 5.46 & $7.7 \times 10^{-8}$ \\
30 & 57.691 & 57.397 & 0.294 & 1{,}192 & 6.18 & $1.3 \times 10^{-9}$ \\
40 & 51.630 & 51.280 & 0.350 & 1{,}419 & 6.99 & $8.8 \times 10^{-12}$ \\
50 & 46.485 & 46.103 & 0.382 & 1{,}549 & 7.54 & $2.2 \times 10^{-13}$ \\
\hline
\end{tabular}
\end{table}

For every horizon, the ADP policy achieves a higher average number of home days per patient per period, and the improvement is statistically significant in all cases ($p < 0.001$).
A key finding is that the performance gap between the two policies widens as the time horizon increases. At $T=10$ (about 2.5 years), the ADP policy yields 377 additional home days annually per 1,000 patients; this annual gain grows steadily to 1,549 at $T=50$ (about 12.5 years), representing a more than fourfold increase. Figure \ref{fig:annual_gain} illustrates this trend. This pattern is consistent with the design of our policy: by incorporating the long-run adjustment term \eqref{Delta'}, the ADP policy accounts for the future consequences of current treatment decisions, an advantage that compounds over longer time horizons. In contrast, the myopic policy maximizes only the immediate treatment effect and thus fails to account for how today's allocation shapes tomorrow's population health.

\begin{figure}[ht]
\centering
\begin{tikzpicture}
\begin{axis}[
    width=0.75\textwidth,
    height=0.45\textwidth,
    xlabel={Time horizon $T$ (periods)},
    ylabel={Annual additional home days per 1{,}000 patients},
    xtick={10,20,30,40,50},
    ymin=0, ymax=1900,
    ybar,
    bar width=14pt,
    grid=major,
    grid style={dashed, gray!30},
    nodes near coords,
    nodes near coords style={font=\small, above},
    every node near coord/.append style={yshift=1pt},
]
\addplot[fill=black!60, draw=black] coordinates {
    (10, 377)
    (20, 900)
    (30, 1192)
    (40, 1419)
    (50, 1549)
};
\end{axis}
\end{tikzpicture}
\caption{Annualized improvement in home days per 1,000 patients (ADP minus myopic) as a function of the time horizon. The growing bar heights demonstrate that the forward-looking advantage of the ADP policy compounds over longer horizons. All differences are statistically significant at $p < 0.001$.\label{fig:annual_gain}}
\end{figure}
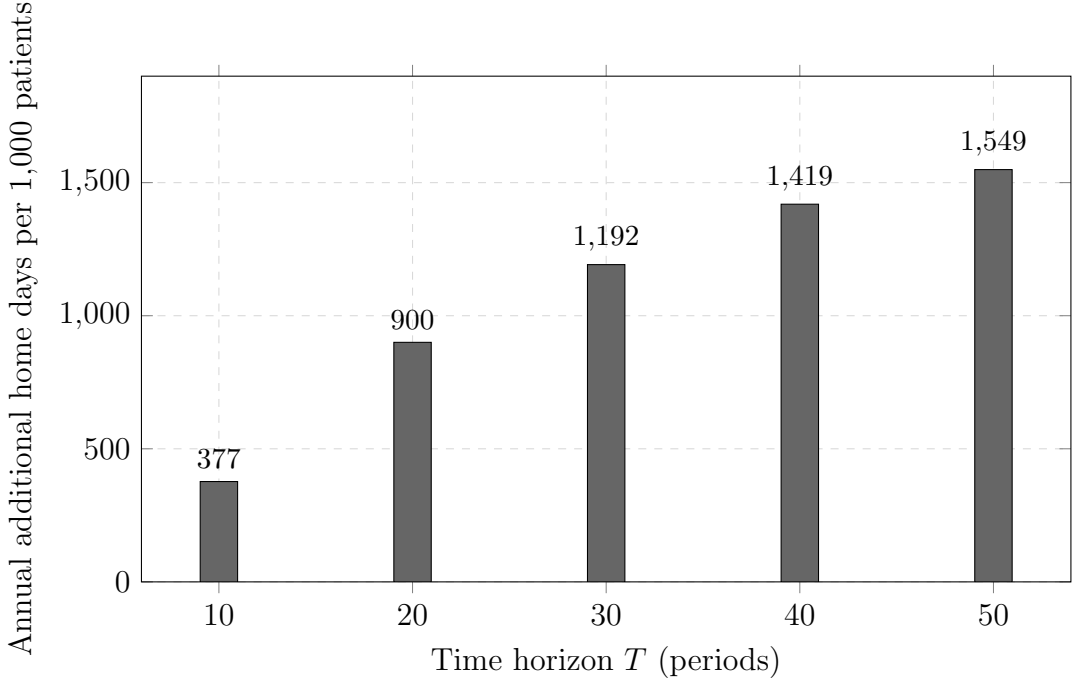


		\section{Concluding remarks and extensions}

In this paper, we introduce a novel framework for optimizing Population Health Management strategies that captures both population-level dynamics and high-dimensional patient covariates. We focus on the problem of selecting patients for treatment under a capacity constraint, with the goal of improving overall population health. Rather than tracking individual patients, we optimize the {\it distribution} of the population over time using the measurized framework for MDPs introduced in \cite{measurized}. This approach avoids the curse of dimensionality inherent in identified patient selection, which is particularly important for healthcare populations that can range from thousands to over 200,000 individuals.

We begin by formulating the discounted infinite-horizon optimality equations of the Measurized MDP, where the population dynamics are modeled as a controlled measure-valued process. This model captures key features such as treatment dropouts, patient inflow and outflow, and differentiated covariate evolution for treated and untreated patients. We dualize the capacity constraint using time-dependent Lagrange multipliers \citep{measurizedWMDP}. Under the assumption of i.i.d. patients, the problem collapses to a one-dimensional formulation, with capacity constraints enforced in expectation. We then formulate the problem of identifying the optimal steady-state distribution of the population, which corresponds to solving the associated average-reward problem. To ensure ethical compliance, we incorporate a non-maleficence constraint that places an upper bound on the allowable mortality rate.

In order to address the resulting problem, we propose an ADP approach that estimates the bias function using a linear combination of health-related basis functions (e.g., QALYs). This approximation relaxes the infinite-dimensional equilibrium constraint into a finite set of expected value constraints over health measures. Moreover, it gives rise to an {\it adjusted impactability} function that captures both the short- and long-term effects of treatment. We solve the steady-state problem in two stages: (1) a row generation algorithm to estimate the parameters of the approximation, and (2) a Lagrangian-based policy that selects patients for treatment whenever their adjusted impactability exceeds a threshold. The resulting policy is both theoretically sound and straightforward to implement in clinical settings: patients attaining an adjusted impact larger than a certain threshold should be selected into treatment. 

To validate our approach, we conduct a simulation study using 2018 CMS data, with population dynamics and basis functions estimated via logistic and lasso regressions. We compare our ADP policy against a myopic benchmark across multiple time horizons ranging from 10 to 50 periods. The results show that the ADP policy consistently and significantly outperforms the myopic policy across all horizons, with the performance gap widening as the horizon increases. This pattern confirms the value of the look-ahead component: by accounting for the long-run consequences of treatment decisions, our policy achieves cumulative gains that grow over time. At the longest horizon, the improvement amounts to over 1,500 additional home days annually per 1,000 patients.

While this paper focuses on the development of a methodological framework for optimizing PHM, several challenges remain before it can be deployed in practice and used to assess social welfare in the real-world. In particular, our formulation assumes knowledge of the impactability function $\Delta(\bx)=y_1(\bx)-y_0(\bx)$. In practice, estimating this function requires isolating the causal effect of treatment from observational data, a task that remains challenging and is still an active area of research.
This issue is especially pronounced because of the uncertainty surrounding $y_1(\cdot)$ and the relevant covariates $\bx$ that affect it. Integrating our framework with methods that learn these effects, potentially through approaches such as Thompson sampling, represents an important direction for future work. More broadly, extending the model to account for uncertainty in key components of the population dynamics, such as $p_0(\cdot)$, $p_{d,0}(\cdot)$, and $p_{d,1}(\cdot)$, is essential for bridging the gap between theory and practice.


\color{black}

\bibliographystyle{jf}
\bibliography{references} 		

@article{astaraky,
  title={A simulation based approximate dynamic programming approach to multi-class, multi-resource surgical scheduling},
  author={Astaraky, Davood and Patrick, Jonathan},
  journal={European Journal of Operational Research},
  volume={245},
  number={1},
  pages={309--319},
  year={2015},
  publisher={Elsevier}
}

@misc{CMS2030,
  author       = {Rawal, Purva and Quinton, Jacob and Hughes, Dora and Fowler, Liz},
  title        = {CMS Innovation Center’s Strategy to Support High-Quality Primary Care},
  year         = {2023},
  howpublished          = {https://www.cms.gov/blog/cms-innovation-centers-strategy-support-high-quality-primary-care},
  note         = {Accessed: 2025-06-09}
}

@article{deo,
  title={Improving health outcomes through better capacity allocation in a community-based chronic care model},
  author={Deo, Sarang and Iravani, Seyed and Jiang, Tingting and Smilowitz, Karen and Samuelson, Stephen},
  journal={Operations Research},
  volume={61},
  number={6},
  pages={1277--1294},
  year={2013},
  publisher={INFORMS}
}

@article{diamant,
  title={Dynamic patient scheduling for multi-appointment health care programs},
  author={Diamant, Adam and Milner, Joseph and Quereshy, Fayez},
  journal={Production and Operations Management},
  volume={27},
  number={1},
  pages={58--79},
  year={2018},
  publisher={SAGE Publications Sage CA: Los Angeles, CA}
}

@article{grandclement,
  title={Robustness of proactive intensive care unit transfer policies},
  author={Grand-Cl{\'e}ment, Julien and Chan, Carri W and Goyal, Vineet and Escobar, Gabriel},
  journal={Operations Research},
  volume={71},
  number={5},
  pages={1653--1688},
  year={2023},
  publisher={INFORMS}
}

@article{PDE_nature,
  title={A rigorous theoretical and numerical analysis of a nonlinear reaction-diffusion epidemic model pertaining dynamics of COVID-19},
  author={Wang, Laiquan and Khan, Arshad Alam and Ullah, Saif and Haider, Nadeem and AlQahtani, Salman A and Saqib, Abdul Baseer},
  journal={Scientific Reports},
  volume={14},
  number={1},
  pages={7902},
  year={2024},
  publisher={Nature Publishing Group UK London}
}

@misc{PHM,
  author    = {{NHS England}},
  title     = {Population Health Management},
  year      = {2024},
  howpublished       = {https://www.england.nhs.uk/long-read/population-health-management/},
  note      = {Accessed: 2025-06-09}
}

@article{PHM_def,
  title={Population health management: review of concepts and definitions},
  author={Swarthout, Meghan and Bishop, Martin A},
  journal={American Journal of Health-System Pharmacy},
  volume={74},
  number={18},
  pages={1405--1411},
  year={2017},
  publisher={Oxford University Press}
}

@article{roso,
  title={12-year evolution of multimorbidity patterns among older adults based on Hidden Markov Models},
  author={Roso-Llorach, Albert and Vetrano, Davide L and Trevisan, Caterina and Fern{\'a}ndez, Sergio and Guisado-Clavero, Marina and Carrasco-Ribelles, Luc{\'\i}a A and Fratiglioni, Laura and Viol{\'a}n, Concepci{\'o}n and Calder{\'o}n-Larra{\~n}aga, Amaia},
  journal={Aging (Albany NY)},
  volume={14},
  number={24},
  pages={9805},
  year={2022}
}

@article{saure2020,
  title={Dynamic multi-priority, multi-class patient scheduling with stochastic service times},
  author={Saur{\'e}, Antoine and Begen, Mehmet A and Patrick, Jonathan},
  journal={European Journal of Operational Research},
  volume={280},
  number={1},
  pages={254--265},
  year={2020},
  publisher={Elsevier}
}

@article{saure2021,
  title={Dynamic multi-appointment patient scheduling for radiation therapy},
  author={Saure, Antoine and Patrick, Jonathan and Tyldesley, Scott and Puterman, Martin L},
  journal={European Journal of Operational Research},
  volume={223},
  number={2},
  pages={573--584},
  year={2012},
  publisher={Elsevier}
}

@book{TripleAim,
  author    = {Stiefel, Mary and Nolan, Kathleen},
  title     = {A Guide to Measuring the Triple Aim: Population Health, Experience of Care, and Per Capita Cost},
  year      = {2012},
  publisher = {Institute for Healthcare Improvement},
  address   = {Cambridge, Massachusetts},
  note      = {IHI Innovation Series white paper}
}

@book{WHO2025,
  title     = {A Global Health Strategy for 2025–2028: Advancing Equity and Resilience in a Turbulent World. Fourteenth General Programme of Work},
  author    = {{World Health Organization}},
  year      = {2025},
  address   = {Geneva},
  publisher = {World Health Organization},
  isbn      = {978-92-4-010101-2},
  note      = {Licence: CC BY-NC-SA 3.0 IGO. Accessed: 2025-06-09}
}

@article{yaesoubi,
  title={Generalized Markov models of infectious disease spread: A novel framework for developing dynamic health policies},
  author={Yaesoubi, Reza and Cohen, Ted},
  journal={European journal of operational research},
  volume={215},
  number={3},
  pages={679--687},
  year={2011},
  publisher={Elsevier}
}

@article{zhang2017,
  title={Robust Markov decision processes for medical treatment decisions},
  author={Zhang, Yuanhui and Steimle, L and Denton, Brian T},
  journal={Optimization online},
  year={2017}
}

@article{adelman_weakly,
  title={Relaxations of weakly coupled stochastic dynamic programs},
  author={Adelman, Daniel and Mersereau, Adam J},
  journal={Operations Research},
  volume={56},
  number={3},
  pages={712--727},
  year={2008},
  publisher={INFORMS}
}

@article{anderson1983review,
	title={A review of duality theory for linear programming over topological vector spaces},
	author={Anderson, Edward J},
	journal={Journal of mathematical analysis and applications},
	volume={97},
	number={2},
	pages={380--392},
	year={1983},
	publisher={Elsevier}
}

@book{hernandez2012discrete,
	title={Discrete-time Markov control processes: basic optimality criteria},
	author={Hern{\'a}ndez-Lerma, On{\'e}simo and Lasserre, Jean B},
	volume={30},
	year={2012},
	publisher={Springer Science \& Business Media}
}

@article{beauchamp2003methods,
	title={Methods and principles in biomedical ethics},
	author={Beauchamp, Tom L},
	journal={Journal of Medical ethics},
	volume={29},
	number={5},
	pages={269--274},
	year={2003},
	publisher={Institute of Medical Ethics}
}

@article{bertsimas2013fairness,
	title={Fairness, efficiency, and flexibility in organ allocation for kidney transplantation},
	author={Bertsimas, Dimitris and Farias, Vivek F and Trichakis, Nikolaos},
	journal={Operations Research},
	volume={61},
	number={1},
	pages={73--87},
	year={2013},
	publisher={INFORMS}
}

@article{olsen2011concepts,
	title={Concepts of equity and fairness in health and health care},
	author={Olsen, Jan Abel},
	year={2011}
}

@article{mccoy2014using,
	title={Using fairness models to improve equity in health delivery fleet management},
	author={McCoy, Jessica H and Lee, Hau L},
	journal={Production and Operations Management},
	volume={23},
	number={6},
	pages={965--977},
	year={2014},
	publisher={SAGE Publications Sage CA: Los Angeles, CA}
}

@article{aouad,
  title={An approximate dynamic programming approach to the incremental knapsack problem},
  author={Aouad, Ali and Segev, Danny},
  journal={Operations Research},
  year={2022},
  publisher={INFORMS}
}

@article{banditswithknapsacks,
  title={Bandits with knapsacks},
  author={Badanidiyuru, Ashwinkumar and Kleinberg, Robert and Slivkins, Aleksandrs},
  journal={Journal of the ACM (JACM)},
  volume={65},
  number={3},
  pages={1--55},
  year={2018},
  publisher={ACM New York, NY, USA}
}

@article{bastani2021greedy,
  title={Mostly exploration-free algorithms for contextual bandits},
  author={Bastani, Hamsa and Bayati, Mohsen and Khosravi, Khashayar},
  journal={Management Science},
  volume={67},
  number={3},
  pages={1329--1349},
  year={2021},
  publisher={INFORMS}
}

@article{bastaniCOVID,
  title={Interpretable Operations Research for High-Stakes Decisions: Designing the Greek COVID-19 Testing System},
  author={Bastani, Hamsa and Drakopoulos, Kimon and Gupta, Vishal and Vlachogiannis, J and Hadjicristodoulou, C and Lagiou, P and Magiorkinis, G and Paraskevis, D and Tsiodras, S},
  journal={INFORMS Journal on Applied Analytics (Forthcoming)},
  year={2022}
}

@article{bertsimas2002,
  title={An approximate dynamic programming approach to multidimensional knapsack problems},
  author={Bertsimas, Dimitris and Demir, Ramazan},
  journal={Management Science},
  volume={48},
  number={4},
  pages={550--565},
  year={2002},
  publisher={INFORMS}
}

@article{clautiaux,
  title={An iterative dynamic programming approach for the temporal knapsack problem},
  author={Clautiaux, Fran{\c{c}}ois and Detienne, Boris and Guillot, Ga{\"e}l},
  journal={European Journal of Operational Research},
  volume={293},
  number={2},
  pages={442--456},
  year={2021},
  publisher={Elsevier}
}

@article{gawande2011,
  title={The Hot Spotters. Can we lower medical costs by giving the neediest patients better care?},
  author={Gawande, Atul},
  journal={New Yorker},
  year={2011},
  month={January},
  day={17},
  url={https://www.newyorker.com/magazine/2011/01/24/the-hot-spotters}
}

@article{langford,
  title={The epoch-greedy algorithm for multi-armed bandits with side information},
  author={Langford, John and Zhang, Tong},
  journal={Advances in neural information processing systems},
  volume={20},
  year={2007}
}

@article{lassobandit,
  title={Online decision making with high-dimensional covariates},
  author={Bastani, Hamsa and Bayati, Mohsen},
  journal={Operations Research},
  volume={68},
  number={1},
  pages={276--294},
  year={2020},
  publisher={INFORMS}
}

@book{lerma,
  title={Discrete-time Markov control processes: basic optimality criteria},
  author={Hern{\'a}ndez-Lerma, On{\'e}simo and Lasserre, Jean B},
  volume={30},
  year={2012},
  publisher={Springer Science \& Business Media}
}

@article{measurized,
  title={Measurized {M}arkov Decision Processes},
  author={Adelman, Dan and Olivares-Nadal, Alba V},
  note={Submitted, arXiv:2405.03888},
  year={2026}
}

@article{measurizedWMDP,
  title={Timewise Lagrangian Dualization of Weakly Coupling Constraints in Markov Decision Processes},
   author={Adelman, Dan and Olivares-Nadal, Alba V},
  note={Working paper},
  year={2026}
}

@article{PIA,
  title={Policy iteration for average cost Markov control processes on Borel spaces},
  author={Hern{\'a}ndez-Lerma, On{\'e}simo and Lasserre, Jean B},
  journal={Acta Applicandae Mathematica},
  volume={47},
  number={2},
  pages={125--154},
  year={1997},
  publisher={Springer}
}

@article{price-directed,
  title={Price-directed replenishment of subsets: Methodology and its application to inventory routing},
  author={Adelman, Daniel},
  journal={Manufacturing \& Service Operations Management},
  volume={5},
  number={4},
  pages={348--371},
  year={2003},
  publisher={INFORMS}
}

@book{stochastic-stability, 
place={Cambridge}, 
edition={2}, 
series={Cambridge Mathematical Library}, 
title={Markov Chains and Stochastic Stability}, 
DOI={10.1017/CBO9780511626630}, 
publisher={Cambridge University Press}, 
author={Meyn, Sean and Tweedie, Richard L. and Glynn, Peter W.}, 
year={2009}, 
collection={Cambridge Mathematical Library}}

		\section*{Appendix A. The measurized framework \label{sec:measurized}}
		
		In this section we summarize the {\it measurized} framework the authors developed in \cite{measurized} with the aim to deal with the patient selection problem presented in this paper. To {\it measurize}  is to frame MDPs from a distributional perspective: the states of our MDP are probability distributions, and the actions are stochastic kernels. These {\it measurized} MDPs ($\mM$-MDPs) are deterministic in the spaces of measures and stochastic kernels. We prove that these deterministic processes are a generalization of classical {\it stochastic} MDPs, and the optimal solutions of their discounted optimality equations coincide. As a consequence, we can frame any MDP from the measurized perspective without loss of optimality under some mild assumptions. 
		
		Throughout this section and this paper we assume that Assumptions 4.2.1, 4.2.2  in \cite{lerma} hold. Under these assumptions, \cite{measurized} shows that the optimal policy is attained. As a consequence, all the supremums on the optimality equations and linear programming formulations exposed in this section have been substituted by a maximum. Some other mild assumptions are sometimes needed to prove some of the results on Measurized MDPs, but for the sake of brevity they are not specified. The reader is referred to \cite{measurized} for more technical details.
		
		\subsection*{A.1. The infinite-horizon discounted problem}

We begin this section by introducing the so-called {\it measurized} MDPs, which we denote $\mM$-MDPs.  $\mM$-MDPs are standard MDPs that have been lifted to the space of probability measures. The formal definition is as follows.\\

\begin{definition}\label{def:measurizedMDP}
Let $(\mathcal{S},\mathcal{U},\{\mathcal{U}(s)| \ s \in \mathcal{S}\},Q,r)$ be a standard MDP. A {\it measurized} MDP $(\mMpS,\Phi,\{\Phi(\nu)| \ \nu \in \mMpS\},\overline{Q},\ovr)$ is a measure-valued MDP such that:
\begin{itemize}
\item[(i)] a state $\nu \in \mMpS$ is a probability measure over the states $s\in \mS$ of the standard MDP
\item[(ii)] the action space $\Phi:=\{ \varphi \in \mathcal{K}(\mU|\mS): \ \varphi(\mU(s)|s)=1 \ \forall s \in \mS\}$ is the set of feasible Markovian decision rules of the standard MDP
\item[(iii)] $\Phi(\nu)$ is the set of admissible actions from state $\nu$
\item[(iv)] the transition kernel $\overline{Q}$ is deterministic. More specifically, the next state of the Measurized MDP is computed according to a function $F: \mMpS \times \mathcal{K}(\mU|\mS) \rightarrow \mMpS$ defined as
\begin{equation}\label{F}\tag{F}
\nu'(\cdot)=F(\nu,\varphi)(\cdot):=  \int_{\mathcal{S}}  \int_{\mathcal{U}}Q(\cdot|s,u) \varphi(du|s) d\nu(s). 
\end{equation}
Therefore $\overline{Q}$ can be expressed using \eqref{F} as

\begin{equation}\label{Qmu}\tag{$\overline{Q}$}
\overline{Q}(\mathcal{B}|\nu,\varphi)=\left\{ 
\begin{array}{ll}
1 & \quad \mbox{ if F$(\nu,\varphi)\in \mathcal{B}$}\\
0 & \quad \mbox{ otherwise}\\
\end{array}
\right.
\end{equation}
\item[(v)] the reward function $\ovr$ is the expected revenue of the standard MDP computed with respect to a distribution $\nu \in \mMpS$ and a stochastic kernel $\varphi \in \mathcal{K}(\mU|\mS)$; i.e.,

\begin{equation}\label{R} \tag{$\ovr$}
\ovr(\nu,\varphi):=\mathbb{E}_{\nu}\mathbb{E}_{\varphi} [r(s,u)]= \int_{\mathcal{S}} \int_{\mathcal{U}}   r(s,u) \ \varphi(du|s) d\nu(s).\\
\end{equation}
\end{itemize}

If $\Phi(\nu)=\Phi$, we say that the MDP is the measurized counterpart of the original MDP. Otherwise, we say it is a {\it tightened} Measurized MDP.\\
\end{definition}

		The discounted optimality equations associated with this Measurized MDP can be written
		\begin{align}
			\overline{V}^*(\nu)&=\sup_{\varphi \in \Phi} \ \left\{ \ovr(\nu,\varphi) + \alpha   \overline{V}^*(F(\nu,\varphi))\right\}. \label{measurized_DCOE} \tag{$\mM$-$\alpha$-DCOE}
		\end{align}
		
		\cite{measurized} proves that these deterministic processes are a generalization of classical {\it stochastic} MDPs, and the optimal solutions of their discounted optimality equations coincide. In particular, they show that $\V^*(\delta_s)=V^*(s)$ for all $s \in \mS$, and that the optimal policy $\pi^*=\{\varphi_t^*\}_{t \geq 0}$ solving \eqref{measurized_DCOE} is attainable and does not depend on the initial state distribution $\nu_0$. Moreover $\pi^*$ is a Markov deterministic policy such that $\varphi_t^*(f^*(s)|s)=1$ for all  $t \geq 0, \ s \in \mS$, where $f^* \in \mathbb{F}$ is the optimal selector for the original stochastic MDP from which the Measurized MDP is lifted. As a consequence, we can frame any MDP from the measurized perspective without loss of optimality under some mild assumptions.

		\subsection*{ A.2. The weakly coupled DPs}
		
		In \cite{measurizedWMDP} we extend the weakly coupled dynamic programs introduced in \cite{adelman_weakly} by allowing for Borel state and action spaces. In  particular, we assume that the state space of the MDP can be decomposed into $I$ disjoint state spaces $\mS=\mS_1 \times ... \times \mS_I$ and that the feasible action space can be written as  $\underline{\mU}(\bs)=\{ u_i \in \mU_i(s_i): \ \sum_{i=1}^I d^k(s_i,u_i) \leq b^k, \ k=1,...,K  \}$, where $b^k \in \mathbb{R}$ and $d^k_i$ are functions transforming state-action pairs $(s_i,u_i)$ into a real number for all $k=1,...,K, \ i=1,...,I$. Moreover, assume the reward function is $r(\bs,\bu)=\sum_{i=1}^I r_i(s_i,u_i)$, and that the transition is performed independently for each state; i.e. $Q=\prod_{i=1}^I q_i$. That is to say  for every set $\mA=\mA_1\times...\times \mA_I \in \mBS$ we can write $Q(\mA|\bs,\bu)=\prod_{i=1}^I q_i(\mA_i|s_i,u_i)$. Assume that the supremum for the dynamic program associated with this weakly coupled MDP gets attained. Then its optimality equations have the following expression $\forall \bs \in \mS$

		\begin{align}
			V^*_t(\bs_t)=\max_{\substack{u_{i,t} \in \mU_i(s_{i,t})\\ i=1,...,I}}  & \ \sum_{i=1}^I r_i(s_{i,t},u_{i,t}) +\alpha \mathbb{E} [V^*_{t+1}(\bs_{t+1})|\bs_t,\bu_t] & \tag{WDP$_t$} \label{WDPt}\\
			\mbox{s.t.} & \ \sum_{i=1}^I d_i^k(s_{i,t},u_{i,t}) \leq b^k & k=1,...,K. \nonumber
		\end{align}

		In an MDP as described above the linking constraints $\sum_{i=1}^I d^k(s_i,u_i) \leq b^k, \ k=1,...,K$ are the only obstacle for treating the MDP as $I$ separate processes. \cite{adelman_weakly} show that dualizing the constraints with respect to $K$ Lagrange multipliers $\blambda=(\lambda_1,...,\lambda_K)$ allows to decompose the value function as the sum of the value functions of some disjoint subproblems. In contrast, the approach proposed in \cite{measurizedWMDP} proposes a novel timewise dualization of the linking constraints in \eqref{WDPt} with respect to different Lagrange multipliers $\lambda_t^k$ for each $t \geq 0$ and each $k=1,...,K$
		
		\begin{align}
			V_t^{*\Lambda}(\bs_t)=\max_{\substack{u_{i,t} \in \mU_i(s_{i,t})\\ i=1,...,I}}  & \ \sum_{i=1}^I r_i(s_{i,t},u_{i,t}) + \sum_{k=1}^K \lambda_t^k \left(b^k -\sum_{i=1}^I d_i^k(s_{i,t},u_{i,t})  \right) +\alpha \mathbb{E}_{Q} [V_{t+1}^{*\Lambda} (\bs_{t+1})| \bs_t, \bu_t], & \tag{DP$^\Lambda$} \label{DPlambda}
		\end{align}
		
		\noindent where $\Lambda=\{ \lambda_t^k\}_{t \geq 0, k=1,..,K}$ is a given sequence of nonnegative multipliers. For fixed $\Lambda$ and $I$-dimensional state distribution $\bnu_t$ at any time $t\geq 0$, the measurized optimality equations \eqref{measurized_DCOE} associated to \eqref{DPlambda} can be written as

		\begin{align} \label{measurized_DPlambda}\tag{$\mM$-DP$^\Lambda$}
			\oV_{t}^{*\Lambda}(\bnu_t)& =\max_{\substack{\bvarphi_{t} \in \Phi  }}  \  \mathbb{E}_{\bnu_{t}} \mathbb{E}_{\bvarphi_{t}} \left[  \sum_{i=1}^I \  \left\{r_i(s_{i},u_{i}) + \sum_{k=1}^K \lambda_t^k \left( \frac{b^k}{I}-d_i^k(s_{i},u_{i})  \right) \right\}\right]  +\alpha \oV_{t+1}^{*\Lambda}(F(\bnu_t,\bvarphi_t))
		\end{align}
		
		\noindent Now we define $\Lambda^*=\{\lambda_t^*\}_{t \geq 0}$, if it exists, as the sequence of Lagrange multipliers such that 
		
		\begin{equation}\label{Lambda_opt}
			\Lambda^* \in \arg\inf_{\Lambda \geq 0} \   \ \int_\mS V_0^{*\Lambda} (\bs_0)d\bnu_0(\bs_0). 
		\end{equation}
		
        \cite{measurizedWMDP} proves that in that measurized 
        version of the relaxed weakly coupled problem \eqref
        {DPlambda} evaluated at the optimal $\Lambda^*$, 
        ($\mM$-DP$^{\Lambda^*}$), we optimize the expected 
        revenue subject to expected value constraints. 		
		
		\begin{proposition}[\cite{measurized}] \label{prop:E_constraint}
			\textcolor{black}{Assume that $(\mS,\mBS,\nu)$ and $(\mU(s),\mathcal{B}(\mU(s)),\varphi(\cdot|s))$ are complete measure spaces for every $s \in \mS$.} Moreover, assume that the initial states $s_{1,0},...,s_{I,0}$ are independent; that is to say, $s_{0,i} \sim \nu_{0,i}$ for all $i=1,...,I$, and let $F_i(\nu_i,\varphi_i)(\cdot)=\int_{\mS_i} \int_{\mU_i} q_i(\cdot|s_i,u_i) \varphi_i(du_i|s_i) d\nu_i(s_i)$. Moreover, assume that there always exists a strictly feasible $\bvarphi=(\varphi_1,...,\varphi_I)$; i.e., for any $\bnu \in \mMpS$, $\exists \bvarphi$ such that
			
			\begin{equation}\label{Slater}
				\sum_{i=1}^I \int_{\mS_i} \int_{\mU_i} d_i^k(s_i,u_i) \varphi_i(du_i|s_i) d\nu_i(s_i) <b^k \qquad \forall \ k=1,...,K
			\end{equation}
			
			Then, there exists a sequence of Lagrange multipliers $\Lambda^*=\{\lambda_t^*\}_{t \geq 0}$ attainable in \eqref{Lambda_opt}. Moreover, the measurized value function defined in \eqref{measurized_DPlambda} evaluated at  $\Lambda^*$, $\oV^{*\Lambda^*}$, solves the following problem for any independent distribution $\bnu \in \mMpS$ 
			
			\begin{align}\label{D}\tag{D}
				\oV^{*\Lambda^*}(\bnu)= \ \max_{\substack{\varphi_i \in \Phi_i\\ i=1,...,I}} & \ \ \sum_{i=1}^I \int_{\mS_i} \int_{\mU_i} r_i(s,u) \varphi_i(du|s) d\nu_i(s)+ \alpha \oV^{*\Lambda^*}((F_1(\nu_{1},\varphi_{1}),...,F_I(\nu_{I},\varphi_{I})))\\
				\mbox{s.t.} & \ \ \sum_{i=1}^I \int_{\mS_i} \int_{\mU_i} d_i^k(s,u) \varphi_i(du|s) d\nu_i(s) \leq b^k/I \qquad k=1,...,K \nonumber
			\end{align}
			
		\end{proposition}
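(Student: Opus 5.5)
The plan is to read the timewise-dualized measurized problem \eqref{measurized_DPlambda} as the Lagrangian of a deterministic, infinite-horizon, constrained problem over sequences of product measures. I would then obtain both claims, existence of $\Lambda^*$ and the constrained representation, from a convex (in fact linear) duality argument, finishing with a dynamic programming step.

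\textbf{Step 1 (reformulation).} Fix an independent initial distribution $\bnu_0=(\nu_{1,0},\dots,\nu_{I,0})$. Since $Q=\prod_i q_i$ and the actions are coordinatewise kernels, Lemma 1 of \cite{measurizedWMDP} (already used for \eqref{fi}) gives that every policy $\{\bvarphi_t\}$ produces a deterministic trajectory $\nu_{i,t+1}=F_i(\nu_{i,t},\varphi_{i,t})$. The trajectory stays independent across $i$. I would encode a policy by the state–action marginals $m_{i,t}(ds,du)=\varphi_{i,t}(du|s)\nu_{i,t}(ds)$. Reward and constraint terms are then linear in $(m_{i,t})$, and the dynamics read $m_{i,t+1}(\cdot\times\mU_i)=\int q_i(\cdot|s,u)\,dm_{i,t}$, which is linear as well. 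The completeness assumptions on $(\mS,\mBS,\nu)$ and on $(\mU(s),\mathcal{B}(\mU(s)),\varphi(\cdot|s))$ are used here, to disintegrate any feasible $m_{i,t}$ back into a kernel $\varphi_{i,t}$. Two consequences follow. The feasible set of marginal sequences is convex. And for fixed $\Lambda$,
\[
\int V_0^{*\Lambda}\,d\bnu_0=\sup_{(m_t)}\sum_{t\ge0}\alpha^t\Big[\sum_i\langle m_{i,t},r_i\rangle+\sum_k\lambda_t^k\Big(b^k-\sum_i\langle m_{i,t},d_i^k\rangle\Big)\Big],
\]
which is exactly the Lagrangian of the problem that maximizes discounted reward subject to the period-$t$ expected linking constraints, with multiplier $\alpha^t\lambda^k_t$.

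\textbf{Step 2 (strong duality and attainment).} The primal is a linear program over the convex set from Step 1, with countably many scalar constraints. Slater's condition \eqref{Slater} holds at every reachable $\bnu_t$, so I would build a strictly feasible policy period by period. I would then invoke a Lagrangian duality theorem for convex programs. For attainment I expect to need the primal slack to be bounded away from zero uniformly in $t$. I would try to get this from the Slater hypothesis, which holds for \emph{every} $\bnu$, together with boundedness of $r_i,d_i^k$. A uniform slack $\epsilon$ bounds the optimal multipliers by $\sup|r|/\big((1-\alpha)\epsilon\big)$ in the weighted $\ell^1$ sense.

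\textbf{Main obstacle.} The dual of $\ell^\infty$ contains finitely additive elements, so I would avoid it. Instead I would truncate to horizon $T$, apply finite-dimensional Slater duality to get multipliers $\Lambda^{(T)}$ with the uniform bound above, extract a coordinatewise convergent subsequence by a diagonal argument, and pass $T\to\infty$. The discount makes the tails vanish uniformly. The limit $\Lambda^*$ attains the infimum in \eqref{Lambda_opt} and yields zero duality gap.

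\textbf{Step 3 (recursive form).} With zero gap and complementary slackness, the dualized value at $\Lambda^*$ equals the value of the constrained problem in which each period's constraint holds in expectation. The normalized $b^k/I$ form appears because the constants $\lambda_t^k b^k/I$ were distributed over the $I$ components in \eqref{measurized_DPlambda}. Finally, because the measurized dynamics are deterministic, the principle of optimality splits this constrained infinite-horizon problem into the first-period constrained choice plus $\alpha$ times the continuation value from $(F_1(\nu_1,\varphi_1),\dots,F_I(\nu_I,\varphi_I))$. That continuation problem has the same form again, started from an independent distribution. This gives the fixed-point equation \eqref{D} for all independent $\bnu$.
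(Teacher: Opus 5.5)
The paper gives no proof of this proposition; it is quoted from the companion work. So your argument has to stand on its own, and Step 2 does not.

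\textbf{Step 2: the uniform slack is not available.} Slater at every $\bnu$ gives a positive slack at each state, not a uniform $\epsilon$, and the optimal multipliers can be forced to be unbounded. Take $I=K=1$, $\mathcal{S}=\{0,1,2,\dots\}$ with $s_{t+1}=s_t+1$, $\mathcal{U}(s)=[0,1]$, $r(s,u)=u$ and $d(s,u)=b-\alpha^{3s}+\alpha^{2s}u$.
\begin{itemize}
\item Rewards and consumption are bounded, and $u\equiv 0$ is strictly feasible at every $\nu$.
\item From $\delta_0$ the problem separates over periods. The constrained optimum is $u_t=\alpha^t$.
\item The period-$t$ dual function $\max\{0,1-\lambda\alpha^{2t}\}+\lambda\alpha^{3t}$ has, for $t\ge 1$, the unique minimizer $\lambda_t^*=\alpha^{-2t}$.
\end{itemize}
So the infimum in \eqref{Lambda_opt} is attained and the statement survives, but $\alpha^t\lambda_t^*=\alpha^{-t}$. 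No weighted-$\ell^1$ bound of the form $\sup|r|/((1-\alpha)\epsilon)$ can hold, and the discount by itself cannot control the multiplier part of the tails.

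\textbf{A repair of Step 2.} The truncation idea can be rescued without any summability. Write $W_T$ and $W$ for the constrained values with horizon $T$ and $\infty$, $L_{T_0}$ for the Lagrangian summed over $t<T_0$, and $R:=\sum_i\sup|r_i|$.
\begin{itemize}
\item Fix one strictly feasible policy built period by period from $\bnu_0$, with slacks $s_t^k>0$.
\item Duality for the horizon-$T$ problem (finitely many coupling constraints) gives $\alpha^t\lambda^{(T),k}_t s^k_t\le 2R/(1-\alpha)$, independently of $T$. This coordinatewise bound is all the diagonal extraction needs.
\item For zero gap, compare an arbitrary $m$ with the policy that follows $m$ up to $T_0$ and then takes strictly feasible actions from whatever state it reaches. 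Its multiplier terms after $T_0$ are nonnegative, so $W_T\ge L_{T_0}(m,\Lambda^{(T)})-\alpha^{T_0}R/(1-\alpha)$.
\item Let $T\to\infty$ along the subsequence and then $T_0\to\infty$. The needed limit $W_T\to W$ follows from the same splicing.
\end{itemize}
What controls the tails is the sign of the multiplier terms on strictly feasible continuations, not the discount.

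\textbf{Step 3: the $b^k/I$ factor.} Your own Lagrangian carries $\lambda_t^k\bigl(b^k-\sum_i\langle m_{i,t},d_i^k\rangle\bigr)$. Zero gap therefore yields the aggregate constraint $\sum_i\int\!\int d_i^k\,\varphi_i(du|s)\,d\nu_i(s)\le b^k$, with the same normalization as \eqref{Slater}. Splitting $\lambda_t^k b^k$ into $I$ copies of $\lambda_t^k b^k/I$ in \eqref{measurized_DPlambda} and summing gives back $\lambda_t^k b^k$; it cannot put a factor $1/I$ on a summed constraint.

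The $b^k/I$ belongs to the per-component constraint of Corollary \ref{cor:iid}. For $I>1$ the displayed constraint is inconsistent with \eqref{Slater} and \eqref{measurized_DPlambda}, and it can even be infeasible under the Slater hypothesis. You should prove the $b^k$ version and flag the misprint rather than rationalize it.

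\textbf{Step 3: dependence of $\Lambda^*$ on the initial state.} $\Lambda^*$ depends on the initial distribution. In the example, the $\Lambda^*$ of $\delta_0$ gives a strictly larger dualized value at $\delta_5$ than the constrained optimum there. So ``for any independent $\bnu$'' holds only with the multipliers re-optimized at each $\bnu$.

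What your Step 3 actually proves is the constrained Bellman equation for $\bnu\mapsto\min_\Lambda \oV_0^{*\Lambda}(\bnu)$. It follows from zero gap at every $\bnu$ plus the principle of optimality. It does not need complementary slackness, which would in any case require a primal attainment argument you have not given.
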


		Although the measurized problem \eqref{D} provides a relaxation of the linking constraint into an expected value constraint, and hence an upper bound of the real value function, it is still constituted by $I$ MDPs. Therefore, the tractability of this problem depends on how large $I$ is. Intuitively, if the $I$ MDPs are started out symmetrically, one may be able to collapse the $I$-dimensional problem into one dimension. The next corollary proves this.

		\begin{corollary}[\cite{measurizedWMDP}]\label{cor:iid}
			Assume that the initial states and the transition kernels are i.i.d.; i.e. $\nu_i=\nu$ and $q_i=q$ for all $i=1,...,I$, and that the reward functions and constraints are the same across subproblems; i.e., $\mU_i(s)=\mU$ for all $s \in \mS$, $i=1,...,I$ and $r_i(s,u)=r(s,u)$, $d_i(s,u)=d(s,u)$ for all $s \in \mS, \ u \in \mU(s)$, $i=1,...,I$. Moreover, assume that the sets of feasible kernels coincide $\Phi_i=\Phi$ for all $i=1,...,I$ and the Slater's condition \eqref{Slater} holds. Then the measurized value function solving \eqref{D} is
			
			\begin{equation*}
				\oV^{*\Lambda^*}(\bnu)=I\cdot \ov^*(\nu)
			\end{equation*}
			where $\ov(\nu)$ is the solution to the one-dimensional problem
			
			\begin{align*} 
				\ov^*(\nu)= \ \max_{\substack{\varphi \in \Phi}}& \ \ \left\{ \int_{\mS} \int_{\mU} r(s,u) \varphi(du|s) d\nu(s)+ \alpha \ov^*(F(\nu,\varphi))\right\} \\
				\mbox{s.t.} & \  \ \int_{\mS} \int_{\mU} d^k(s,u) \varphi(du|s) d\nu(s) \leq b^k/I \qquad k=1,...,K \nonumber
			\end{align*}
		\end{corollary}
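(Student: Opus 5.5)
The plan is to pass from the recursive formulation \eqref{D} of Proposition \ref{prop:E_constraint} to an equivalent linear program over sequences of state--action measures, and then use a symmetrization (averaging) argument that exploits the i.i.d.\ structure.

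\textbf{Step 1 (unroll into an LP).} Unroll the deterministic recursion \eqref{D} along trajectories. A policy is a sequence $\{(\varphi_{1,t},\dots,\varphi_{I,t})\}_{t\ge 0}$, and it induces states $\nu_{i,t+1}=F_i(\nu_{i,t},\varphi_{i,t})$. Replace each pair $(\nu_{i,t},\varphi_{i,t})$ by the joint state--action measure $m_{i,t}:=\nu_{i,t}\otimes\varphi_{i,t}$ on $\mS\times\mU$. In these variables:
\begin{itemize}
\item the reward $\int r\,dm_{i,t}$ is linear in $m_{i,t}$;
\item the coupling constraint $\sum_i\int d^k\,dm_{i,t}$ is linear;
\item the dynamics become the linear flow equations $m_{i,t+1}(\cdot\times\mU)=\int q(\cdot\,|\,s,u)\,dm_{i,t}(s,u)$, with $m_{i,0}(\cdot\times\mU)=\nu$.
\end{itemize}
So $\oV^{*\Lambda^*}(\bnu)$ equals the value of an infinite-dimensional LP in $\{m_{i,t}\}$. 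The two objects are equivalent in both directions. From the recursion to the LP, unroll the Bellman equation, using attainment of the supremum. From the LP to the recursion, disintegrate each $m_{i,t}$ into its first marginal and a stochastic kernel. This requires $\mS,\mU$ to be standard Borel, which is implicit in the Hern\'andez-Lerma--Lasserre assumptions. The kernel must also lie in $\Phi$; this holds because $m_{i,t}$ is supported on the graph $\{(s,u):u\in\mU(s)\}$, a property preserved under averaging.

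\textbf{Step 2 (symmetrize).} Take any feasible, and in particular an optimal, $\{m_{i,t}\}$ for the $I$-dimensional LP, and set $\bar m_t:=\frac1I\sum_i m_{i,t}$. Since $q_i=q$ and $\nu_i=\nu$, the flow equations are identical across $i$ and linear, so $\bar m_t$ satisfies them with initial marginal $\nu$. The reward satisfies $\sum_i\int r\,dm_{i,t}=I\int r\,d\bar m_t$. The linking constraint reads $I\int d^k\,d\bar m_t=\sum_i \int d^k\,dm_{i,t}$, so it becomes the per-subproblem constraint $\int d^k\,d\bar m_t\le b^k/I$. Here I would match the normalization used in \eqref{measurized_DPlambda}, where each subproblem is charged $b^k/I$. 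Hence the replicated profile $(\bar m_t,\dots,\bar m_t)$ is feasible, and its objective equals $I$ times the one-dimensional objective evaluated at $\bar m_t$. This gives $\oV^{*\Lambda^*}(\bnu)\le I\,\ov^*(\nu)$.

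\textbf{Step 3 (reverse inequality).} Conversely, any feasible one-dimensional policy $\varphi$ can be replicated as $\varphi_i=\varphi$ for all $i$. This profile is feasible for \eqref{D} and earns exactly $I$ times the one-dimensional value, so $\oV^{*\Lambda^*}(\bnu)\ge I\,\ov^*(\nu)$. Slater's condition \eqref{Slater} is used only to guarantee that $\Lambda^*$ exists via Proposition \ref{prop:E_constraint}, and hence that \eqref{D} is the right problem to start from.

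\textbf{Main obstacle.} I expect the delicate part to be the rigorous equivalence in Step 1 between the recursive optimality equations and the sequence LP in $m_{i,t}$. Two points need care. First, the averaged measure $\bar m_t$ must still disintegrate into an admissible Markov kernel that reproduces $\bar m_{t+1}$. Second, the value function itself is obtained this way, not merely some feasible value, which requires convergence of the discounted series under bounded rewards. The direct approach of averaging the kernels $\varphi_i$ fails, because $F$ is not linear in $\varphi$ for fixed $\nu_i$ once the $\nu_i$ differ. Working with joint measures linearizes the problem and avoids this issue.
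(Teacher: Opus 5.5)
The paper does not prove this corollary. It is imported from \cite{measurizedWMDP}, and the way it is invoked in Section~\ref{sec:knapsack} (``taking the infimum over $\Lambda\geq 0$ \ldots\ collapses the problem'') suggests a dual-side argument:
\begin{itemize}
\item For fixed $\Lambda$, the relaxed problem \eqref{measurized_DPlambda} separates across $i$. With identical data this gives $\oV^{*\Lambda}(\bnu)=I\,\ov^{*\Lambda}(\nu)$, where $\ov^{*\Lambda}$ is the one-dimensional Lagrangian problem with per-subproblem budget $b^k/I$.
\item One then takes $\inf_\Lambda$ on both sides and applies Proposition~\ref{prop:E_constraint} once more, to a single subproblem. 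That subproblem's Slater condition follows from \eqref{Slater} at $\bnu=(\nu,\dots,\nu)$ by mixing the $\varphi_i$ uniformly, which is legitimate at a single step because the $\nu_i$ coincide.
\end{itemize}

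Your proposal is correct and takes the primal route instead: starting from \eqref{D}, you symmetrize an arbitrary feasible profile and replicate a one-dimensional one. Your route is more elementary and needs strong duality only to reach \eqref{D}. It also proves slightly more: identical policies are optimal for the expected-constraint problem, via an explicit symmetrization map. The dual route gets the collapse immediately from separability, and it shows that the same $\Lambda^*$ is optimal for the one-dimensional problem.

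You were right to read the constraint of \eqref{D} as $\sum_i\int\int d_i^k\,\varphi_i\,d\nu_i\le b^k$. As printed, with $b^k/I$ on the right, replication would yield a one-dimensional budget of $b^k/I^2$. That is inconsistent with the corollary and with the $m/n$ in \eqref{K*1}.

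The step you flag as the main obstacle is easier than you expect. Uniform averaging of the $\varphi_{i,t}$ fails, but Radon--Nikodym weights work. Set $\bar\varphi_t(\cdot\mid s)=\sum_i g_{i,t}(s)\,\varphi_{i,t}(\cdot\mid s)$, with $g_{i,t}=d\nu_{i,t}/d\bigl(\sum_j\nu_{j,t}\bigr)$ normalized to sum to one everywhere. This is a pointwise convex combination of kernels in $\Phi$, hence in $\Phi$. It satisfies $\bar\nu_t\otimes\bar\varphi_t=\bar m_t$ and, inductively, $F(\bar\nu_t,\bar\varphi_t)=\bar\nu_{t+1}$. So the whole argument runs in the original kernel variables, with no disintegration theorem, standard-Borel assumption, or null-set selector fix.

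The only real caveat is the one you already name. Identifying the recursive solutions with trajectory suprema needs bounded rewards, or else reading $\oV^{*\Lambda^*}$ and $\ov^*$ as optimal values.
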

		Because in the collapsed unidimensional space states are measures $\nu$, we still obtain detailed information and arguably do not lose information when patients are symmetric.
		
		\subsection*{A.3. The long-run Average Reward Problem}
		
		Define the optimal long-run expected average reward (AR) as
		
		\begin{equation*}\label{Jn}
			J^*(s)=J(\pi^*,s)=\sup_{\Pi} \left\{ \liminf_{n\rightarrow \infty} \frac{1}{n} \mathbb{E}_s^{\pi} \left[ \sum_{t=0}^{n-1} r(s_t,u_t)\right] \right\} \qquad \qquad \forall s \in \mS.
		\end{equation*}
		Let a $g^*$ be a constant, $h$ a real-valued measurable function on $\mS$ and $f \in \mathbb{F}$ a selector. The Average-Reward Optimality Equation is given by

		\begin{align}\label{AROE}\tag{AROE}
			g^* +h(s) & = \sup_{\mU(s)} \left[r(s,u) + \int_\mS h(s')Q(ds'|s,u) \right] \qquad \qquad  \forall s \in \mS 
		\end{align}
		Under certain conditions, one can prove that $J^*(s)=g^*$ for all $s \in \mS$. In \cite{measurized} authors define the measurized Average-Reward function $\overline{J}^*(\nu):=\int_\mS J^*(s) d\nu(s)$ and provide the measurized optimality equations

		\begin{equation} \label{measurized_AROE} \tag{$\mM$-AROE}
			g^*= \sup_{\substack{\overline{h}(\cdot) \\ \varphi \in \Phi}} \left\{ \ovr(\nu,\varphi)+  \overline{h}(F(\nu,\varphi))-\overline{h}(\nu) \right\}  \qquad \qquad \forall \nu \in \mMpS,
		\end{equation}
		where  $\overline{h}(\nu):=\int_\mS h(s) d\nu(s)$ for all $\nu \in \mMpS$. Clearly $ \overline{J}^*(\nu)=J^*(s)=g^*$ for all $s \in \mS$, $\nu \in \mMpS$. In addition, we show that the optimal AR value function $J^*(\cdot)$ can be retrieved by solving the measurized discounted optimality equations in equilibrium. That is to say, write the infinite-horizon discounted optimality equations \eqref{measurized_DCOE}, with a discount factor $\alpha \in (0,1)$, as
		
		\begin{align*}
			\overline{V}_\alpha^*(\nu)=\max_{\varphi \in \Phi} &\ \left\{ \ovr(\nu,\varphi) + \alpha   \overline{V}_\alpha^*(\nu')\right\}   \qquad \qquad \forall \nu \in \mMpS\\
			\mbox{s.t. } &\ \nu'=F(\nu,\varphi). \nonumber
		\end{align*}

		A natural next step is to consider this problem in steady-state; i.e., when $\nu'=\nu$
		\begin{align*}
			(1-\alpha)\overline{V}_\alpha^*(\nu)=\max_{\varphi \in \Phi} &\ \ovr(\nu,\varphi) \\
			\mbox{s.t. } &\ \nu=F(\nu,\varphi) \nonumber
		\end{align*}
		The constraint $\nu=F(\nu,\varphi)$ indicates that the current state distribution $\nu$ is a solution to the fixed point equation given by $F(\cdot,\varphi)$, where $\varphi \in \Phi$ is the implemented decision rule, and therefore we can consider the Markov process with transition given by $F(\cdot,\varphi)$ to be in equilibrium. We then aim to find the best equilibrium in the MDP by solving the equilibrium problem 
		
		\begin{flalign*}
			\sup_{\substack{\nu \in \mMpS \\ \varphi \in \Phi}} & \ \ \ovr(\nu,\varphi)  \\
			\mbox{s.t.}& \ \ \nu=F(\nu,\varphi), \nonumber
		\end{flalign*}
		
		Finally, the following proposition of \cite{measurized} shows that the optimal reward in equilibrium corresponds to the measurized Average-Reward value function. We prove so by assuming that there exists an equilibrium point for every optimal decision rule.
		\begin{proposition}[\cite{measurized}]\label{prop:equilibrium_AROI}
			Let $(\nu^*,\varphi^*)$ be the optimal solution to the equilibrium problem \eqref{equilibrium}, and suppose that the stochastic MDP satisfies Assumptions 4.2.1 and 5.4.1 of \cite{lerma}. Denote the optimal stationary policy for the discounted infinite-horizon problem \eqref{measurized_DCOE} as $\pi^*_\alpha=\{\varphi_\alpha^*\}_{t \geq 0}$ for any $\alpha \in (0,1)$, and assume that there exists a probability measure $\nu_\alpha^*\in \mMpS$ that is a solution to the fixed point equation $\nu_\alpha^*=F(\nu_\alpha^*,\varphi_\alpha^*)$ for every $\alpha \in (0,1)$. Then 
			\begin{enumerate}
				\item[(a)] the measurized AR-value function is $\ovr(\nu^*,\varphi^*)$, i.e.,
				\begin{equation*}\label{Jopt}
					\overline{g}^*=\overline{J}^*(\nu_0)=\liminf_{n \rightarrow \infty} \ \frac{1}{n}  \sum_{t=0}^{n-1} \ovr(\nu_t,\varphi_t)=\ovr(\nu^*,\varphi^*) \qquad \forall \nu_0 \in \mMpS.
				\end{equation*}
				\item[(b)] there exists a sequence of discount factors $\alpha(n) \uparrow 1$ such that 
				\begin{equation*}\label{vanishing_discount}
					\lim_{n \rightarrow \infty} (1-\alpha(n)) \oV^*_{\alpha(n)}(\nu)=\ovr(\nu^*,\varphi^*) \qquad \forall \nu \in \mMpS.
				\end{equation*}
				
				\item[(c)] the original and measurized optimal AR value functions coincide, i.e. $g^*=\overline{g}^*$, or equivalently
				\begin{equation*}\label{same_J}
					J^*(s)=\overline{J}^*(\nu)=\ovr(\nu^*,\varphi^*) \qquad \forall s \in \mS, \ \nu \in \mMpS.
				\end{equation*}
			\end{enumerate}
		\end{proposition}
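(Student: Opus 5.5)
The plan is to prove all three parts with a \emph{vanishing discount} argument that squeezes $\ovr(\nu^*,\varphi^*)$ between two bounds on the constant $g^*$.

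\textbf{Setup.} I will use two facts from Section A.1.
\begin{itemize}
\item[(i)] The measurized discounted value function satisfies $\oV^*_\alpha(\nu)=\int_\mS V^*_\alpha(s)\,d\nu(s)$. This follows from $\oV^*_\alpha(\delta_s)=V^*_\alpha(s)$ together with the fact that the optimal policy is Markov deterministic and does not depend on $\nu_0$.
\item[(ii)] The transition $F(\cdot,\varphi)$ is deterministic.
\end{itemize}
Under Assumptions 4.2.1 and 5.4.1 of \cite{lerma}, the relative value functions $h_\alpha(s):=V^*_\alpha(s)-V^*_\alpha(z)$, for a fixed reference state $z$, are uniformly bounded in $\alpha$. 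Moreover, there is a sequence $\alpha(n)\uparrow 1$ with $(1-\alpha(n))V^*_{\alpha(n)}(s)\to g^*$ for every $s$, and $J^*(s)=g^*$ is constant (Theorem 5.4.3 of \cite{lerma}). Integrating against $\nu$ and using bounded convergence then gives $(1-\alpha(n))\oV^*_{\alpha(n)}(\nu)\to g^*$ for every $\nu\in\mMpS$.

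\textbf{Lower bound $g^*\geq \ovr(\nu^*,\varphi^*)$.} Start the measurized process at $\nu^*$ and apply the stationary rule $\varphi^*$. Since $\nu^*=F(\nu^*,\varphi^*)$, the deterministic trajectory stays at $\nu^*$ forever. Its average reward is therefore exactly $\ovr(\nu^*,\varphi^*)$. In the original MDP this corresponds to a stationary randomized policy started from $s_0\sim\nu^*$, so $\int J^*\,d\nu^*\geq\ovr(\nu^*,\varphi^*)$. Because $J^*\equiv g^*$, the lower bound follows.

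\textbf{Upper bound $g^*\leq \ovr(\nu^*,\varphi^*)$.} Fix $\alpha$ and take the hypothesized fixed point $\nu^*_\alpha=F(\nu^*_\alpha,\varphi^*_\alpha)$. Along the optimal trajectory from $\nu^*_\alpha$ the state never moves, so the discounted optimality equation gives
\[(1-\alpha)\oV^*_\alpha(\nu^*_\alpha)=\ovr(\nu^*_\alpha,\varphi^*_\alpha).\]
The pair $(\nu^*_\alpha,\varphi^*_\alpha)$ is feasible for the equilibrium problem, so the right-hand side is at most $\ovr(\nu^*,\varphi^*)$. For any $\nu$ I write
\[(1-\alpha)\oV^*_\alpha(\nu)=(1-\alpha)\oV^*_\alpha(\nu^*_\alpha)+(1-\alpha)\Big[\textstyle\int h_\alpha\,d\nu-\int h_\alpha\,d\nu^*_\alpha\Big].\]
The bracketed term is bounded by $2\sup_\alpha\|h_\alpha\|_\infty$, so the second summand vanishes as $\alpha\uparrow1$. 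Passing to the limit along $\alpha(n)$ yields $g^*\leq\ovr(\nu^*,\varphi^*)$.

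\textbf{Conclusion.} Combining the two bounds gives $g^*=\ovr(\nu^*,\varphi^*)$. This is (c), since $\overline{J}^*(\nu)=\int J^*\,d\nu=g^*$. It is also (b), because the limit $(1-\alpha(n))\oV^*_{\alpha(n)}(\nu)\to g^*$ is now identified. For (a), I note that the $\liminf$ of the averages along the optimal measurized trajectory equals $\int J^*\,d\nu_0=g^*$ for every $\nu_0$.

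\textbf{Expected obstacle.} The delicate step is justifying the interchange between measurized and original quantities. This means showing that $\oV^*_\alpha$ is the $\nu$-integral of $V^*_\alpha$ uniformly enough to pass the Abelian limit through the integral. It also means that the average reward of the deterministic measurized trajectory agrees with the expected average reward of the original process, since a $\liminf$ of expectations is not in general the expectation of a $\liminf$. I would handle both points using the uniform boundedness of $h_\alpha$ and of $r$ (Assumption 4.2.1), together with the Markov-deterministic structure of $\varphi^*_\alpha$ established in \cite{measurized}.
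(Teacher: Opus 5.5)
Your route (evaluate the discounted equation at a fixed point $\nu^*_\alpha$ of the $\alpha$-optimal rule, compare with the equilibrium optimum, let $\alpha\uparrow 1$) is the one the paper's remarks in Appendix A.3 point to. However, the step that closes your upper bound is not available under the stated hypotheses.

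\textbf{The gap.} You bound the bracket by $2\sup_\alpha\|h_\alpha\|_\infty$. Assumption 5.4.1 of \cite{lerma} only gives a \emph{pointwise} bound $h_\alpha(s)\leq b(s)$, with $b$ neither bounded nor even measurable. Assumption 4.2.1 does not make $r$ bounded either. Pointwise bounds give $(1-\alpha)V^*_\alpha(s)\to g^*$ for each fixed $s$, and for each fixed $\nu$ when $r$ is bounded. But $\nu^*_\alpha$ moves with $\alpha$, so $(1-\alpha)\int h_\alpha\,d\nu^*_\alpha$ need not vanish.

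\textbf{Why this is not a technicality.} Take states $p_0,p_1,\dots$ and $g_0,g_1,\dots$. At $g_k$ the only action earns $1-2^{-k}$ and moves to $g_{k+1}$. At $p_n$ one may stay (reward $0$) or pay $c>0$ to move to $p_{n-1}$ (to $g_0$ from $p_0$). Rewards are bounded, actions are finite, and $J^*\equiv 1$.
\begin{itemize}
\item Any equilibrium puts no mass on the transient $g$-chain and must stay on its support. So every equilibrium, hence $(\nu^*,\varphi^*)$, has value $0$.
\item For each $\alpha$, the $\alpha$-optimal rule stays at every $p_n$ beyond a threshold of order $(1-\alpha)^{-1}$. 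So fixed points $\nu^*_\alpha=\delta_{p_n}$ exist, all with $(1-\alpha)\overline{V}^*_\alpha(\nu^*_\alpha)=0$.
\item Set $h_\alpha:=\sup V^*_\alpha-V^*_\alpha$. For all $\alpha\in[1/2,1)$ it is at most $C(1+n)$ at $p_n$ and at most $2$ on the $g$-chain, so the pointwise form of Assumption 5.4.1 holds. Yet $\|h_\alpha\|_\infty\geq(1-\alpha)^{-1}$.
\end{itemize}
Under the quoted hypotheses alone, your inequality $g^*\leq\ovr(\nu^*,\varphi^*)$, and with it the identity, can therefore fail.

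\textbf{What would repair it.} You need an extra hypothesis. One option is $\sup_\alpha\mathrm{sp}(V^*_\alpha)<\infty$, with $\mathrm{sp}:=\sup-\inf$. This holds if $r$ is bounded and $Q(\cdot|s,u)\geq\epsilon\,\psi(\cdot)$ for all $(s,u)$, the kind of regeneration used in Appendix C, since then $\mathrm{sp}(V^*_\alpha)\leq\mathrm{sp}(r)/\epsilon$. Another option is a measurable $b$ with $\sup_\alpha\int b\,d\nu^*_\alpha<\infty$. Either one makes your decomposition go through as written.

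\textbf{Lower bound.} Your lower bound interchanges limit and integral in the wrong direction. $J(\varphi^*,s)$ is a pointwise $\liminf$, so Fatou gives $\int J(\varphi^*,\cdot)\,d\nu^*\leq\ovr(\nu^*,\varphi^*)$. Boundedness of $r$ does not reverse this; you would need the ergodic theorem for the stationary chain $(\nu^*,\varphi^*)$. It is simpler to stay in the discounted setting. The constant trajectory is feasible from $\nu^*$, so $\ovr(\nu^*,\varphi^*)\leq(1-\alpha)\overline{V}^*_\alpha(\nu^*)\leq(1-\alpha)\sup_s V^*_\alpha(s)$. The right-hand side tends to $g^*$ along $\alpha(n)$. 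This is equivalent to your pointwise limit because $0\leq(1-\alpha)h_\alpha(s)\leq(1-\alpha)b(s)$, and it needs no integrability.

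\textbf{Part (a).} The same device handles the half of (a) asserting that the measurized average is at most $g^*$. Apply the Abelian inequality $\liminf_n\frac1n\sum_{t<n}a_t\leq\liminf_{\alpha\uparrow1}(1-\alpha)\sum_t\alpha^ta_t$ to $a_t=\ovr(\nu_t,\varphi_t)$, and bound the right-hand side by $(1-\alpha)\sup_s V^*_\alpha(s)$. This again avoids exchanging a $\liminf$ with an integral.
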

		
		In conclusion, in \cite{measurized} we show that the AR problem is in fact devoted to find the best policy and the best state distribution such that they remain in equilibrium. This provides a decoupling of the traditional state-action frequencies that usually appear on the linear programming formulations of the MDPs. In addition, this enhances the intuition over the complex state-of-the-art theory developed around the long-run average problem (see, for example, Chapter 5, \cite{lerma}) and greatly clarifies the passage from the discounted to the average-reward case.

        \section*{Appendix B. Sampling interpretation of the Measurized MDP}

        We now provide a sampling interpretation of the measurized problem \eqref{nK*tau}. We assume that, in any sample path of the MDP, the number of patients under treatment at the end of the period is drawn from a Binomial distribution with success probability $\rho(\mathcal{X}) + \tau(\mathcal{X})$. That is, the total number of patients who are either currently treated or newly selected for treatment is given by the random variable $\tilde{n}_{\rho+\tau} \sim \text{Bin}(n, \rho(\mathcal{X}) + \tau(\mathcal{X}))$. As a result, the constraint $\rho(\mathcal{X}) + \tau(\mathcal{X}) \leq m/n$ can be interpreted as an expected capacity constraint, requiring that the average number of treated patients across sample paths does not exceed capacity. The covariates of treated patients are then sampled from the normalized distribution $(\rho + \tau)(\cdot)/(\rho(\mathcal{X}) + \tau(\mathcal{X}))$. Similarly, the number of untreated patients at the end of the period is modeled as a Binomial draw with success probability $\eta(\mathcal{X}) - \tau(\mathcal{X})$, and their covariates are sampled from the normalized distribution $(\eta - \tau)(\cdot)/(\eta(\mathcal{X}) - \tau(\mathcal{X}))$. This reasoning aligns with Theorem 3 and Corollary 1 in \cite{measurized}, which show that $\oV^*(\bnu)=\mathbb{E}_{\bnu} [V^*(\bX,\bz)]=\lim_{\varsigma \to\infty} \sum_{s=1}^{\varsigma} V^*(\bX^s,\bz^s)$, where $(\bX^s,\bz^s)$ are random draws from $\bnu$.
		
		\section*{Appendix C. Uncontrolled healthcare population process \label{appendix:uncontrolled}}

	Under no controlling agent, chain $\{\mu_t\}_{t\geq0}$ of healthcare distribution can be viewed as a dynamical system $(Q, \mathcal{M},d)$ if kernel $Q$ has the weak Feller property (i.e., $Q$ maps bounded functions into continuous functions). Here $\mathcal{M}(\mathcal{X})$ denotes the space of Borel probability measures on $\mathcal{X}$ and $d$ a suitable metric on it. For any kernel $K: \mathcal{B}(\mathcal{X}) \times \mathcal{X} \rightarrow [0,1]$ and measure $\nu \in \mathcal{M}(\mathcal{X})$, we will use this notation throughout the paper
	
	\begin{equation}\label{K_operator}
		\nu K(\cdot)=\int_{\mathcal{X}} K(\cdot |s) d\nu(s).
	\end{equation}

	\noindent  Hence any kernel $K$ can be seen as an operator $K:\mathcal{M} \rightarrow \mathcal{M}(\mathcal{X})$ through the relationship established in \eqref{K_operator}. We use this notation to write the trajectory of $\{\mu_t\}_{t\geq0}$ by the recursive formula
	
	\begin{equation} \label{transition_mu1}
		\mu_{t+1}(\cdot)=\mu_tQ(\cdot)=\int_{\mathcal{X}} Q(\cdot |s) d\mu_t(s),
	\end{equation}
	
	\noindent and we denote by $Q^n$ the kernel that computes the $n$-step transition as in
	
	\begin{equation} \label{transition_mu}
		\mu_{t+n}(\cdot)=\mu_t Q^n(\cdot)=\int_{\mathcal{X}} Q^n(\cdot | s) d\mu_t(s).
	\end{equation}
	
	\noindent We now formulate an expression for  $Q$. Denote by $a_{\mu_t}$ the expected probability that a random patient survives in the healthcare population from period $t$ to $t+1$
	
	\begin{equation} \label{at}
		a_{\mu_t}=\int_{\mathcal{X}} (1-p_{d,0}(s)) d\mu_t(s)=\mathbb{E}_{\mu_t} \left[ 1-p_{d,0} \right],
	\end{equation}
	
	\noindent and define $Q_{\bx'}$ as the (non-normalized) measure of the covariates of the patient were she to remain in the healthcare population for the next period 
	
	\begin{equation} \label{Qx'}
		Q_{\bx'}(\cdot|s)=Q_{\bx,0}(\cdot|s) (1-p_{d,0}(s)).
	\end{equation}
	
	\noindent Because $\int_{\mathcal{X}} Q_{\bx'}(\mathcal{X}|s) d\mu_t(s)=a_{\mu_t}$, we can rewrite \eqref{transition_mu1} as
	
	\begin{align} \label{transition_mu}
		\mu_{t+1}(\cdot)&= \int_{\mathcal{X}} \underbrace{\left(p_{d,0}(s)\psi(\cdot)+(1-p_{d,0}(s))Q_{\bx,0}(\cdot|s) \right)}_{Q(\cdot | s)} d\mu_t(s)\\
		&= \int_{\mathcal{X}} p_{d,0}(s)\psi(\cdot) d\mu_t(s)+\int_{\mathcal{X}} (1-p_{d,0}(s))Q_{\bx,0}(\cdot|s) d\mu_t(s)\\
		&= (1-a_{\mu_t})\psi(\cdot) +\int_{\mathcal{X}} Q_{\bx'}(\cdot|s) d\mu_t(s)\\
		&=\int_{\mathcal{X}} ((1-a_{\mu_t})\psi(\cdot)+Q_{\bx'}(\cdot|s)) d\mu_t(s)
	\end{align}
	
	\noindent which is a probability measure on $\mathcal{B}(\mathcal{X})$. To prove this, let us show that $Q(\cdot|\cdot)$ is a kernel. First we will show that $Q(\cdot|\bx)$ is a probability measure for all $\bx \in \mathcal{X}$. As $Q(\cdot|\bx)$ is a finite convex combination of finite measures is a finite measure. Moreover, 
		
		\begin{align*}
			Q(\mathcal{X}|s)&=p_{d,0}(s)\psi(\mathcal{X})+(1-p_{d,0}(s))Q_{\bx,0}(\mathcal{X}|s) =1 & \forall s \in \mathcal{X}\\
			Q(\emptyset|s)&=p_{d,0}(s)\psi(\emptyset)+(1-p_{d,0}(s))Q_{\bx,0}(\emptyset|s) =0 & \forall  s \in \mathcal{X}\\
			Q(\mathcal{A}|s)&=p_{d,0}(s)\psi(\mathcal{A})+(1-p_{d,0}(s))Q_{\bx,0}(\mathcal{A}|s) \geq 0 & \forall \mathcal{A} \in \mathcal{B}(\mathcal{X}), s \in \mathcal{X}.
		\end{align*}
		
		\noindent Second $Q(\mathcal{A}|\cdot)$ is a measurable function for any $\mathcal{A} \in \mathcal{B}(\mathcal{X})$ as it can be written as a finite product and a finite linear combination of bounded measurable functions. Then kernel $Q$ has the following expression for given covariates $\bx \in \mathcal{X}$
	
	\begin{equation}
		Q(\cdot | \bx)=p_{d,0}(\bx)\psi(\cdot)+(1-p_{d,0}(\bx))Q_{\bx,0}(\cdot|\bx)
	\end{equation}

	If there exists $\mu \in \mathcal{M}(\mathcal{X})$ that solves the fixed point equation
	
	\begin{equation} \label{fixed_point}
		\mu(\cdot)=\mu Q_{\bx,0}(\cdot)= \int_{\mathcal{X}} Q_{\bx,0}(\cdot |s) d\mu (s)
	\end{equation}

	\noindent then 
	
	\begin{equation}\label{varphi}
		\psi(\mathcal{A})=\frac{1}{1-a_\mu} \left(\mu(\mathcal{A}) - \int_{\mathcal{X}} (1-p_{d,0}(s)) Q_{\bx,0}(\mathcal{A}|s) d\mu(s)  \right)
	\end{equation}
	is a well defined probability measure. The proof is the same as before. It is easy to check that $\psi(\mathcal{X})=1$, $\psi(\emptyset)=0$ and $\psi(\mathcal{A})\geq 0, \ \forall \mathcal{A} \in \mathcal{B}(\mathcal{X})$. Moreover, because $\psi$ is a finite linear combination of measures, it is a measure itself. The following proposition outlines the conditions for the convergence of the chain $\{\mu_t\}_{t\geq0}$ to a unique limiting distribution for the covariates, which coincides with $\mu$.



	\begin{proposition}\label{prop_convergence}
		Assume  that there exists $\epsilon>0$ such that $p_{d,0}(\bx)>\epsilon$ for all $\bx \in \mathcal{X}$ and assume that there exists a probability measure $\mu \in \mathcal{M}(\mathcal{X})$ solution to \eqref{fixed_point}. Then, if we define $\psi$ as in \eqref{varphi}, 
		$\mu$ is the unique invariant probability measure verifying
		\begin{equation} \label{mu_limit}
			\sup_{\mathcal{A} \in \mathcal{B}(\mathcal{X})} |Q^t(\mathcal{A}|\bx) -\mu(\mathcal{A})| \rightarrow 0  
		\end{equation}
		as $t \rightarrow \infty$ for any initial condition $\bx \in \mathcal{X}$.
	\end{proposition}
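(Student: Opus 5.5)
The kernel $Q(\cdot\mid\bx)=p_{d,0}(\bx)\psi(\cdot)+(1-p_{d,0}(\bx))Q_{\bx,0}(\cdot\mid\bx)$ satisfies a Doeblin (uniform minorization) condition, and we will prove the proposition through it. Since $p_{d,0}(\bx)>\epsilon$ for every $\bx\in\mX$,
\begin{equation*}
Q(\mA\mid\bx)\geq \epsilon\,\psi(\mA)\qquad \forall \mA\in\mBX,\ \bx\in\mX .
\end{equation*}
So the whole state space is a small set with minorizing measure $\epsilon\psi$. The first step is to check that $\psi$ as defined in \eqref{varphi} is a genuine probability measure, which was argued just before the statement. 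We need $a_\mu<1$, which follows from $p_{d,0}>\epsilon$: indeed $1-a_\mu=\mathbb{E}_\mu[p_{d,0}]>\epsilon>0$. We also need $\psi\geq 0$. This uses the fixed point \eqref{fixed_point}, which gives $\mu(\mA)=\int Q_{\bx,0}(\mA\mid s)\,d\mu(s)\geq\int(1-p_{d,0}(s))Q_{\bx,0}(\mA\mid s)\,d\mu(s)$.

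The second step is invariance. Plugging \eqref{varphi} into $\mu Q$ gives
\begin{equation*}
\mu Q(\mA)=(1-a_\mu)\psi(\mA)+\int(1-p_{d,0}(s))Q_{\bx,0}(\mA\mid s)\,d\mu(s)=\mu(\mA).
\end{equation*}
Hence $\mu$ is invariant for $Q$. Notice that the definition of $\psi$ is exactly what makes this identity hold.

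The third step is the contraction in total variation. Write $Q=\epsilon\psi+(1-\epsilon)R$, where
\begin{equation*}
R(\cdot\mid\bx):=\frac{Q(\cdot\mid\bx)-\epsilon\psi(\cdot)}{1-\epsilon}
\end{equation*}
is a stochastic kernel, being nonnegative by the minorization. For any two probability measures $\nu_1,\nu_2$, the $\epsilon\psi$ parts cancel, so $\|\nu_1Q-\nu_2Q\|_{TV}\leq(1-\epsilon)\|\nu_1-\nu_2\|_{TV}$. Iterating with $\nu_1=\delta_\bx$ and $\nu_2=\mu$, and using $\mu Q^t=\mu$, we obtain
\begin{equation*}
\sup_{\mA}|Q^t(\mA\mid\bx)-\mu(\mA)|\leq(1-\epsilon)^t\to 0,
\end{equation*}
uniformly in $\bx$. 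This is \eqref{mu_limit}. Uniqueness follows from the same contraction: if $\mu'$ is another invariant probability measure, then $\|\mu-\mu'\|_{TV}\leq(1-\epsilon)^t\|\mu-\mu'\|_{TV}$ for all $t$, which forces $\mu'=\mu$. Alternatively, one can cite uniform ergodicity of Doeblin chains in \cite{stochastic-stability}, Theorem 16.0.2.

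We expect the main obstacle to be the measure-theoretic bookkeeping rather than the idea itself. The delicate point is verifying nonnegativity of $\psi$ and of the residual kernel $R$, and making sure the total variation contraction is stated for signed measures of zero total mass. Concretely, for $\nu_1-\nu_2$ with zero total mass, the $\epsilon\psi$ contribution integrates to $\epsilon\psi(\cdot)(\nu_1-\nu_2)(\mX)=0$, which is what gives the factor $1-\epsilon$.
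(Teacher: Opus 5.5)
Your proof is correct, but it takes a more direct route than the paper's.

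The paper invokes the aperiodic ergodic theorem for positive Harris chains (Theorem 13.0.1 of \cite{stochastic-stability}) and checks its hypotheses through four lemmas:
\begin{itemize}
\item $\psi$-irreducibility;
\item invariance of $\mu$, which is the same computation as your second step;
\item Harris recurrence, by showing that $\mathcal{X}$ is petite;
\item strong aperiodicity, by showing that $\mathcal{X}$ is $\nu_1$-small with $\nu_1=\epsilon\psi$.
\end{itemize}
The last two rest on exactly your minorization $Q(\mathcal{A}\mid\bx)\geq\epsilon\psi(\mathcal{A})$.

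You instead use the fact that this minorization holds in one step on the whole state space. You split $Q=\epsilon\psi+(1-\epsilon)R$ and obtain a total-variation contraction directly, so irreducibility, recurrence and aperiodicity never need to be verified. This gives more than the paper's argument:
\begin{itemize}
\item an explicit geometric rate $(1-\epsilon)^t$;
\item uniformity in the initial condition $\bx$, i.e.\ uniform ergodicity, whereas Theorem 13.0.1 gives only convergence for each initial state;
\item uniqueness of the invariant probability measure from the same contraction, with no external theorem needed.
\end{itemize}
You also derive $a_\mu<1$ and $\psi\geq 0$ from the fixed-point property, which the paper only asserts is easy to check.

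The paper's Harris-chain framing would carry over to settings where a minorization holds only on a petite subset rather than on all of $\mathcal{X}$. Under the stated hypothesis $p_{d,0}>\epsilon$ everywhere, that generality is not needed, and your argument is both shorter and sharper.
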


        \begin{proof}{Proof:}

		By  \cite[Theorem 13.0.1]{stochastic-stability}, we have that if our chain is aperiodic positive Harris (i.e., the chain is Harris recurrent and positive), with $\mu$ an invariant measure, then $\mu$ is the unique invariant probability measure verifying \eqref{mu_limit}. Hence, to prove Proposition \ref{prop_convergence} it suffices to prove that
		
		\begin{enumerate}
			\item[(i)] the chain $\{\mu_t\}_{t\geq0}$  is $\psi$-irreducible
			\item[(ii)] if $\psi$ is defined as in \eqref{varphi}, then measure $\mu$ is invariant
			\item[(iii)] the chain $\{\bx_t\}_{t\geq0}$, where $\bx \sim \mu$ is Harris recurrent.
			\item[(iv)] the chain $\{\bx_t\}_{t\geq0}$ in (iii) is aperiodic
		\end{enumerate}
		
		\noindent We do so by formulating and proving four lemmas.
		
		\begin{lemma}\label{lemma_irreducible}
			If the expected fraction of patients to leave the healthcare population is larger than zero for some $t\geq 0$, i.e. $a_{\mu_t}<1$, then the (deterministic) Markov chain $\{\mu_t\}_{t\geq0}$  is $\psi$-irreducible
		\end{lemma}
		
		\begin{proof} {Proof}
			
			According to  \cite[Proposition 4.2.1]{stochastic-stability}, it suffices to prove that 
			\begin{equation}
				\forall \mu \in \mathcal{M}, \mbox{ if } \psi(\mathcal{A})>0 \Rightarrow \exists n_{\mu,\mathcal{A}}>0 \mbox{ such that } \mu Q^n(\mathcal{A})>0
			\end{equation}
			
			Then we have
			
			\begin{flalign} \label{transition_mu}
				\mu_{t+n}(\mathcal{A})=\mu_t Q^n(\mathcal{A})&=\int_{\mathcal{X}} Q^n(\mathcal{A} | s) d\mu_t(s) \nonumber\\
				&=\int_{\mathcal{X}} ((1-a_{t+n-1})\psi(\mathcal{A})+  Q_{\bx'}(\mathcal{A}|s)) d\mu_{t+n-1} (s) \nonumber\\
				&= (1-a_{t+n-1})\psi(\mathcal{A})+  \int_{\mathcal{X}} Q_{\bx'}(\mathcal{A}|s) d\mu_{t+n-1} (s) \nonumber
			\end{flalign}
			
			\noindent so if $\exists k: \ a_{\mu_k}<1$ and $\psi(\mathcal{A})>0$ then $\mu_t Q^n(\mathcal{A})>0$ for $n =k-1$, no matter the expression of $\mu_t$. \hfill $\square$
			
		\end{proof}

		\begin{lemma}
			If $\psi$ is defined as in \eqref{varphi}, then measure $\mu$ is invariant
		\end{lemma}
		
		\begin{proof}{Proof}
			
			We need to prove that 
			\begin{equation}\label{invariant}
				\mu(\cdot)=\mu Q(\cdot)=\int_{\mathcal{X}} Q(\cdot |s) d\mu(s)
			\end{equation}
			Let us evaluate the right hand side of \eqref{invariant}
			\begin{align*}
				\mu Q(\mathcal{A})&=\int_{\mathcal{X}} (p_d(s) \psi(\mA) d\mu(s) +\int_{\mathcal{X}} (1-p_d(s) Q_{\bx,0}(\mA|\bx) d\mu(s) & \forall \mathcal{A} \in \mathcal{B}(\mathcal{X})\\
				&=(1-a_{\mu}) \psi(\mA)  +\int_{\mathcal{X}} (1-p_d (s) Q_{\bx,0}(\mA|\bx) d\mu(s)& \forall \mathcal{A} \in \mathcal{B}(\mathcal{X})\\
				&= \left(\mu(\mathcal{A}) -\int_{\mathcal{X}} (1-p_d(s)) Q_{\bx,0}(\mathcal{A}|s) d\mu(s) \right)+\int_{\mathcal{X}} (1-p_d (s)) Q_{\bx,0}(\mA|\bx) d\mu(s)=\mu(\mA)& \forall \mathcal{A} \in \mathcal{B}(\mathcal{X}),
			\end{align*}
			where the last equality comes from plugging in \eqref{varphi}. \hfill $\square$
		\end{proof}
		
		Because our chain is $\psi$-irreducible and admits an invariant measure, then it is positive. A chain is called positive Harris if it is positive and Harris recurrent. Now we will prove this last requirement through the following lemma.
		
		\begin{lemma}\label{lemma_har}
			Assume that there exists $\epsilon>0$ such that $p_d(\bx)>0$ for all $\bx \in \mathcal{X}$. Then the chain $\{\bx_t\}_{t\geq0}$ is Harris recurrent.
		\end{lemma}
		
		\begin{proof}{Proof}
			\cite[Proposition 9.1.7]{stochastic-stability} states that if there exists a petite set $\mathcal{C} \in \mathcal{B}(\mathcal{X})$ such that $L(\mathcal{C}|\bx)=1$ for all $\bx_0 \in \mathcal{X}$, then the chain $\{\bx_t\}_{t\geq0}$ is Harris recurrent. Note that, by definition, $L(\mathcal{X}|\bx)=1$. So it suffices to prove that $\mathcal{X}$ is a petite set. Choose the distribution $b=\delta_1$, hence
			\begin{align*}
				K_{\delta_1}(\mathcal{A}|\bx) &= Q(\mathcal{A}|\bx) = p_d(\bx) \psi(\mathcal{A}) + (1-p_d(\bx)) Q_{\bx,0}(\mathcal{A}|\bx)\\
				&\geq \inf_{\bs \in \mathcal{X}} Q(\mathcal{A}|\bs) = \inf_{\bs \in \mathcal{X}} \left(p_d(\bs) \psi(\mathcal{A}) + (1-p_d(\bs)) Q_{\bx,0}(\mathcal{A}|\bs \right)\\
				&\geq \psi(\mathcal{A})\inf_{\bs \in \mathcal{X}}  p_d(\bs) \geq  \underbrace{\psi(\mathcal{A})\epsilon}_{\nu_{\delta_1}(\mathcal{A})}.
			\end{align*}
			
			Define $\nu_{\delta_1}(\mathcal{A})=\psi(\mathcal{A})\epsilon$; then $\nu_{\delta_1}(\mathcal{A})>0 $ whenever $\psi(\mathcal{A})>0$ and, in particular, $ K_{\delta_1}(\mathcal{X}|\bx)\geq {\nu_{\delta_1}(\mathcal{X})}$ for all $\bx \in \mathcal{X}$.  \hfill $\square$
		\end{proof}
		
		\begin{lemma}\label{lemma_aperiodic}
			The chain $\{\bx_t\}_{t\geq0}$ is strongly aperiodic.
		\end{lemma}
		
		In order to prove this lemma we need the following definitions
		
		\begin{definition}
			A set $\mathcal{A} \in \mathcal{B}(\mathcal{X})$ is said to be $\nu_m$-small if there exists $m>0$ and a non-trivial measure $\nu_m$ on $\mathcal{B}(\mathcal{X})$ such that 
			\begin{equation}\label{small_set}
				Q^m(\mathcal{A}|\bx) \geq \nu_m(\mathcal{A})
			\end{equation}
		\end{definition}

		\begin{proof}{Proof}
			This proof follows from the previous one. In particular,
			\begin{align*}
				Q^1(\mathcal{A}|\bx) &=  p_d(\bx) \psi(\mathcal{A}) + (1-p_d(\bx)) Q_{\bx,0}(\mathcal{A}|\bx)\\
				&\geq \inf_{\bs \in \mathcal{X}} \left(p_d(\bs) \psi(\mathcal{A}) + (1-p_d(\bs)) Q_{\bx,0}(\mathcal{A}|\bs \right)\\
				&\geq \psi(\mathcal{A})\inf_{\bs \in \mathcal{X}}  p_d(\bs) \geq  \underbrace{\psi(\mathcal{A})\epsilon}_{\nu_{1}(\mathcal{A})}.
			\end{align*}
			so $\mathcal{X}$ is a small set. \hfill $\square$
		\end{proof}
		
	   \end{proof}


        \section*{Appendix D. Illustration of the optimal myopic policy: linear response}

		For illustration purposes, assume that the impactability function is linear:
		
		\begin{equation} \label{linear_response}
			\Delta(\bx)=\beta^{\top}\bx +a
		\end{equation}
		
		\noindent Denote $M=m/n$. We are interested in studying the selection region $\mX_{d}$. For simplicity, let us assume $\mathcal{X}$ is compact and we are initializing treatment, that is to say, $\rho=0$.
		
		\paragraph{ One variable:}
		
		Let us assume $\mathcal{X}=[a,b]$. If $\mu$ is associated to a uniform distribution, then $f_{\mu}=\frac{1}{b-a}$. If $\beta<0$ (i.e. the response is decreasing on $x$) the maximum is attained on $a$, thus we need to find $x_M$ such that $\mu([a,x_M])=M$.
		
		$$\mathbb{P}[a\leq x \leq x_M]=\int_a^{x_M} \frac{1}{b-a} dx=\frac{x_M-a}{b-a}=M \quad \Rightarrow \quad x_M=(b-a)M+a=Mb+(1-M)a$$
		
		So $x_M$ can be seen as a convex combination of $a$ and $b$ and $d^*=\beta x_M + a$
		
		\begin{figure}[H]
			\begin{tabular}{cc }
				\includegraphics[width=250pt]{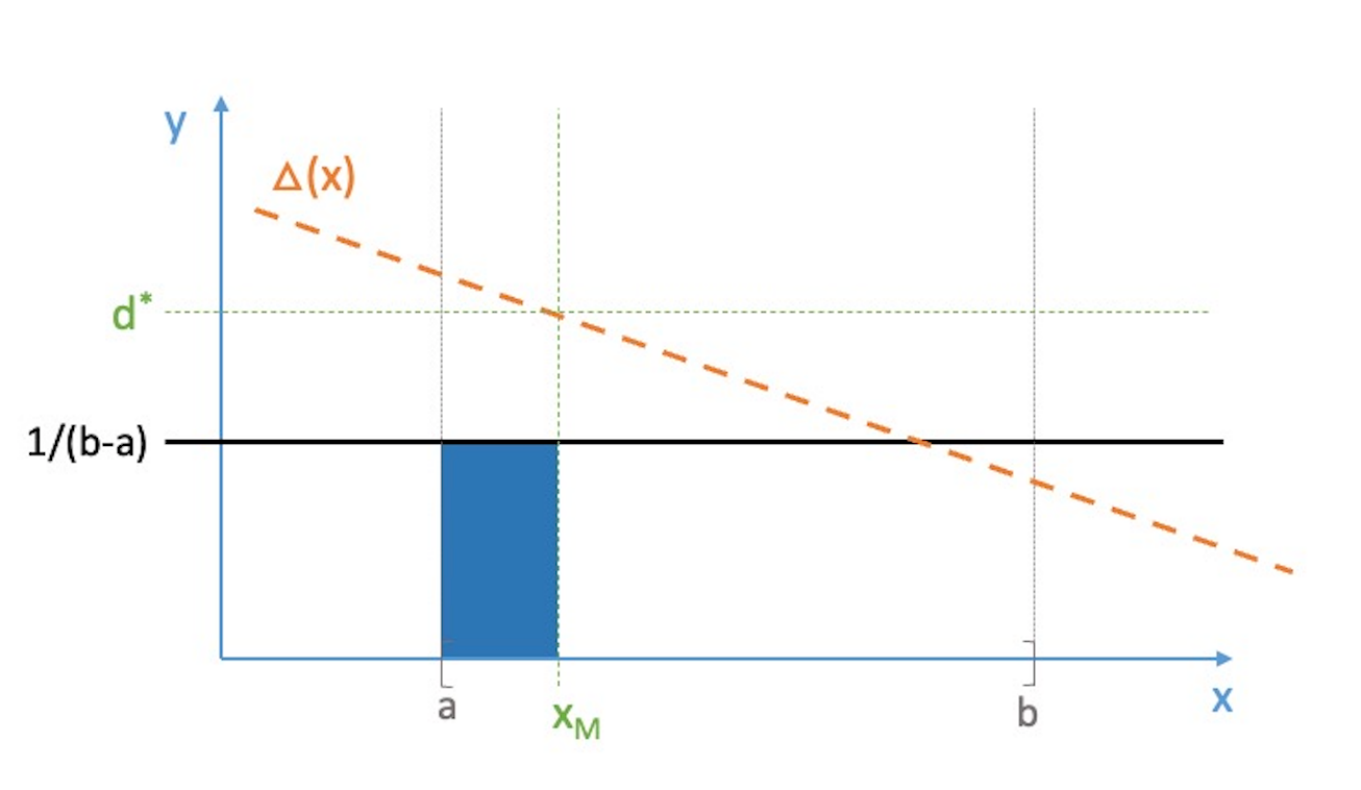} &\includegraphics[width=250pt]{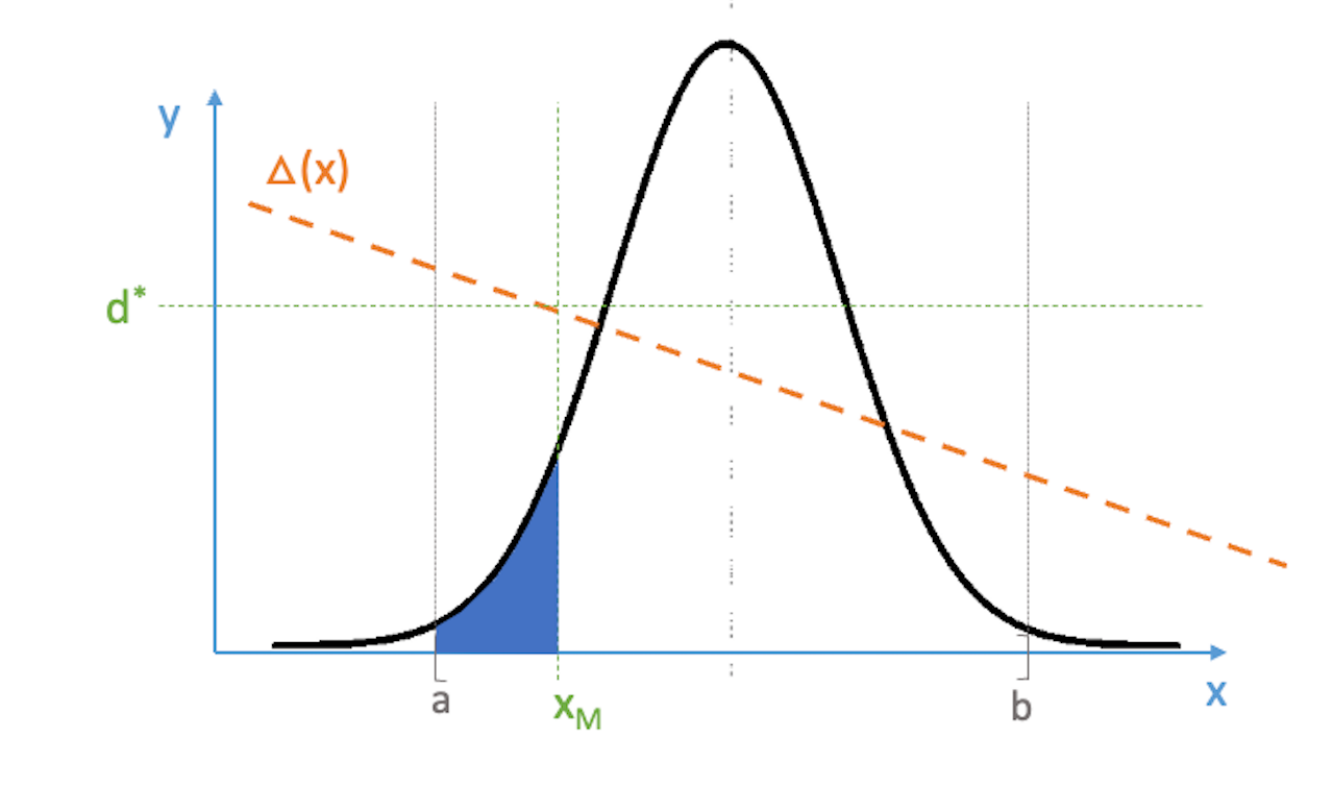}\\
			\end{tabular}
			\caption{Uniform and Normal distributions over the covariate $x$ and a linear response \label{fig:policy}}
		\end{figure}

		Now assume $\mu$ is associated to a truncated Normal distribution. Let's assume once again that $\beta<0$ (the reasoning for $\beta>0$ is analogous), then the response function $h$ attains its maximum at $a$
		
		$$\mathbb{P}[a\leq x \leq x_M]=M \quad \rightarrow \quad \mathbb{P}[ x \leq x_M]=M-\mathbb{P}[a\leq x ]= M -(1 -\mathbb{P}[x \leq a])$$
		
		So $x_M$ is the $M+\mathbb{P}[x \leq a]-1$-th quantile of the Normal distribution. Figure \ref{fig:policy} illustrates the simplicity and implementability of the optimal myopic policy: if a patient with covariates in the blue area shows up, she should be offered to join treatment.

		\paragraph{Multivariate:}
		
		Assume now that $\mu$ is associated to a uniform distribution. For the bivariate case we can exactly compute $x_M$ using the Cavalieri principle (i.e. calculating the integral first in one dimension and leaving everything expressed with respect to the second variable). Then $x_{M1}\eqref{D}$ can be calculated as the result of a second grade equation, $x_{M2}\eqref{D}$ can be obtained plugging $x_{M1}\eqref{D}$ in  $h$ and operating, and $d^*=\argmin_{\beta_1x_{M_1}\eqref{D}+\beta_2 x_{M_2}\eqref{D} \geq d} d$. However, following the reasoning for one variable we can deduce that the selection set $\mathcal{X}_{d^*}$ is a box-shaped region.
		
		Now assume that $\mu$ follows a truncated Normal distribution. Following the reasoning for the one variable case we can deduce that the selection region $\mathcal{X}_{d^*}$ is a {\it slice} of the Normal surface. We need to imagine the level curves on $\mathcal{X}$ and draw vertical hyperplanes that section the surface generated by the pdf.
		
\section*{Appendix E. Further details of the numerical illustrations.}\label{app:experiment}	
\paragraph{\bf Care management and non-care management cohorts:} 
We constructed the CMP and non-CMP cohorts as follows. First, to create the care management cohort, we identified an initial pool of patients from the 2018 CMS data with specific Chronic Care Management (CCM) codes (99490, 99491, 99487, 99489) and the initiating code G0506. From this pool, we selected patients for the cohort if their G0506 code was initiated between April 1, 2018, and July 1, 2018, resulting in a total of 1,626 patients.
The non-care management cohort, which serves as our untreated control group, consists of patients who do not have any of the aforementioned codes. We sampled from this larger population, resulting in a total of 37,846 individuals.

\paragraph{\bf Patients' covariates $\bx$:} \texttt{CHF}, \texttt{Valvular}, \texttt{PHTN}, \texttt{PVD}, \texttt{HTN}, \texttt{Paralysis}, \texttt{NeuroOther}, \texttt{Pulmonary}, 
\texttt{DM}, \texttt{DMcx}, \texttt{Hypothyroid}, \texttt{Renal}, \texttt{Liver}, \texttt{PUD}, \texttt{HIV}, \texttt{Lymphoma}, \texttt{Mets}, \texttt{Tumor}, \texttt{Rheumatic}, 
\texttt{Coagulopathy}, \texttt{Obesity}, \texttt{WeightLoss}, \texttt{FluidsLytes}, \texttt{BloodLoss}, \texttt{Anemia}, \texttt{Alcohol}, \texttt{Drugs}, 
\texttt{Psychoses}, and \texttt{Depression}. 

These covariates correspond to the 29 Elixhauser comorbidity derived from CMS data; each is coded as a binary indicator equal to 1 if the comorbidity is present and 0 otherwise.

\paragraph{\bf Basis functions:} The list of basis functions used is:
\begin{enumerate}
    \item The total number of visits to carriers in the 90 days before the period 
    \item The total number of visits to carriers in the 90 days after the period 
    \item The total number of visits to carriers in the 180 days after the period 
    \item The total number of inpatient days in the 180 days after the period 
    \item The total number of inpatient visits in the 180 days after the period 
    \item The total number of inpatient days in the 90 days after the period 
    \item The total number of inpatient visits in the 90 days after the period 
    \item The total number of inpatient days in the 90 days before the period 
    \item The total number of inpatient visits in the 90 days before the period 
    \item The total number of home days in the 90 days before the period 
\end{enumerate}
	
		%
		%
	\end{document}